\documentclass[11pt]{article}

\usepackage{setspace}
\usepackage[utf8]{inputenc}

\usepackage{amssymb,amsmath,amsthm,amsfonts}
\usepackage[margin=1truein]{geometry}

\usepackage{xcolor}
 \definecolor{bordeaux}{RGB}{100,0,50}
 \definecolor{darkblue}{RGB}{25, 25, 112}

\usepackage[pdftex,colorlinks=true,linkcolor=blue,citecolor=blue,urlcolor=red,unicode=true,hyperfootnotes=false,bookmarksnumbered]{hyperref}
\usepackage[capitalise, compress, nameinlink, noabbrev]{cleveref}
\hypersetup{linkcolor={{blue!70!black}}, citecolor={black}, urlcolor={blue!70!black}}
\hypersetup{linkcolor=bordeaux, citecolor = darkblue, urlcolor = darkblue}

\usepackage{enumerate,color}

\newcommand{\jt}[1]{\textcolor{magenta}{\textbf{[Jane: }#1]}}
\newcommand{\pc}[1]{{\color{blue}{\bf [~Pawel:\ } \color{blue}{\em #1}\color{blue}{\bf~]}}}

\newcommand{\tm}[1]{\textcolor{teal} {{\bf Tamas:} #1}}
\newcommand{\mz}[1]{\textcolor{violet}{\textbf{[Maksim: }#1]}}

\newcommand{\diam}{\operatorname{diam}}

\theoremstyle{plain}
\newtheorem{theorem}{Theorem}
\newtheorem{lemma}[theorem]{Lemma}
\newtheorem{claim}[theorem]{Claim}

\newtheorem{corollary}[theorem]{Corollary}

\theoremstyle{definition} 

\newtheorem{remark}[theorem]{Remark}

\author{Mikhail Isaev\thanks{School of Mathematics and Statistics, UNSW Sydney, Sydney, NSW, 2052, Australia} \and Tam\'{a}s Makai\thanks{Mathematisches Institut der Universit\"{a}t M\"{u}nchen, Munich D-80333, Germany} \and Brendan McKay\thanks{School of Computing, Australian National University, Canberra, ACT, 2601, Australia} \and Pawe{\l} Pra{\l}at\thanks{Department of Mathematics, Toronto Metropolitan University, Toronto, ON M5B 2K3, Canada} \and Jane Tan\thanks{Mathematical Institute, University of Oxford, Oxford OX2 6GG, UK} \and Maksim Zhukovskii\thanks{School of Computer Science, The University of Sheffield, Sheffield S1 4DP, UK}}

\date{}

\title{Canonical labelling of random regular graphs}

\begin{document}

\maketitle 

\begin{abstract}
We prove that whenever $d=d(n)\to\infty$ and $n-d\to\infty$ as $n\to\infty$, then with high probability for any non-trivial initial colouring, the colour refinement algorithm distinguishes all vertices of the random regular graph $\mathcal{G}_{n,d}$. This, in particular, implies that with high probability $\mathcal{G}_{n,d}$ admits a canonical labelling computable in time $O(\min\{n^{\omega},nd^2+nd\log n\})$, where $\omega<2.372$ is the matrix multiplication exponent. 
\end{abstract}

\section{Introduction}

Given an input graph $G$, a {\it canonical labelling} algorithm computes a bijection $\pi_G:V(G)\mapsto\{1,\ldots,n\}$ with the following property: if a graph $G'$ is isomorphic to $G$, then the relabelled versions of $G$ and $G'$, under the actions of $\pi_G$ and $\pi_{G'}$, are identical. There is a linear time reduction from the graph isomorphism problem to canonical labelling: once the labellings $\pi_G,\pi_{G'}$ have been computed for two input graphs $G,G'$, it takes time $O(|E(G)|)$ to check whether $G,G'$ are isomorphic. The best known (in the worst case) algorithm for the graph isomorphism problem is due to Babai~\cite{Babai_iso,Helfgott}: it runs in time $\exp(O(\log^3 n))$ for $n$-vertex graphs. A quasi-polynomial bound $\exp(\log^{O(1)}n)$ is also known for the canonical labelling problem~\cite{Babai_canonical}. Nevertheless, for graphs with bounded degree, both problems can be solved in polynomial time~\cite{Luks,Luks-iso}. In particular, this is the case for $d$-regular graphs when $d=\mathrm{const}.$ In this paper, we show that there exists a polynomial-time canonical labelling algorithm for {\it almost all} $d$-regular graphs for {\it all $0\leq d\leq n-1$}. 

Colour refinement (CR) is a simple algorithmic routine that operates on vertex-coloured graphs. For an input graph $G$ with initial colouring $C_0:V(G)\to\mathbb{Z}$, CR iteratively computes new colourings. At round $t$, $C_t(v)$ is a pair $(C_{t-1}(v),C_{t-1}(N(v)))$, where $C_{t-1}(N(v))$ is the multiset of $C_{t-1}$-colours of neighbours of $v$. That is, the process refines the initial partition $C_0$ and halts once the partition stabilises. Let us call a colouring {\it discrete} if every pair of vertices is coloured differently. If CR runs on an uncoloured graph (i.e., there is only one initial colour) and outputs a discrete colouring, then, since the vertex colours are isomorphism-invariant, this yields a canonical labelling by numbering the colour names in the lexicographic order. In~\cite{BES,CzajkaPan,GRS}, it was proved that CR run on a binomial random graph $\mathcal{G}(n,p)$\footnote{The vertex set of $\mathcal{G}(n, p)$ is $\{1,\ldots,n\}$, and each pair of vertices is adjacent with probability $p = p(n)$, independently of the other pairs.} outputs a discrete colouring with high probability ({\it whp}, in what follows)\footnote{A sequence of events $\mathcal{B}_n$ holds with high probability if $\mathbb{P}(\mathcal{B}_n)\to 1$ as $n\to\infty$.} whenever $(1+\varepsilon)\frac{\ln n}{n}<p\leq\frac{1}{2}$, which implies a near linear time algorithm\footnote{There exists an implementation of CR that runs in time $O((n+|E(G)|)\log n)$ on any $n$-vertex graph $G$~\cite{BBG}. However, whp CR halts on $\mathcal{G}(n,p)$ in $O(1)$ rounds, implying a linear time bound $O(n^2p)$.} for canonical labelling of $G(n,p)$~\cite{BBG}. 

We stress that for regular uncoloured graphs $G$, CR terminates immediately with a trivial colouring, and is therefore unsuitable for canonical labelling. For a positive integer $n$, we denote $[n]:=\{1,\ldots,n\}$. Let $d\leq n-1$ be a non-negative integer such that $dn$ is even. Let $\mathcal{G}_{n,d}$ be a uniform distribution over all $d$-regular graphs on $[n]$. We write $\mathbf{G}_n\sim\mathcal{G}_{n,d}$ for a graph sampled from this distribution, i.e.\ $\mathbf{G}_n$ is a uniformly random $d$-regular graph on $[n]$. Since, for constant $d$, efficient canonical labelling algorithms are known, we focus on the case $d=\omega(1)$. Moreover, since the edge complement of a $d$-regular graph is $(n-1-d)$-regular, the edge complement of $\mathcal{G}_{n,n-1-d}$ is distributed as $\mathcal{G}_{n,d}$. Therefore, we may restrict ourselves to $d\leq n/2$. Our main result shows that the triviality of the initial colouring is the only obstacle for a complete refinement: once a non-trivial initial colouring is produced, whp CR run on $\mathbf{G}_n\sim\mathcal{G}_{n,d}$ outputs a discrete colouring which is suitable for canonical labelling.

Before we state the main result of this paper, we need one more definition. For a connected graph $G$, we denote by $\diam (G)$ its diameter.

\begin{theorem}\label{thm:main}
Let $d_0$ be large enough, let $d=d(n)$ be such that $d_0\leq d\leq n/2$, and let $\mathbf{G}_n \sim \mathcal{G}_{n,d}$. Then, the following holds whp:
for every non-trivial partition $[n]=V_1\sqcup V_2$ of the vertex set of $\mathbf{G}_n$, CR runs at most $2\diam (\mathbf{G}_n)+3$ steps on $\mathbf{G}_n$ and outputs a discrete colouring.
\end{theorem}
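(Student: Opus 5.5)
The plan is to reduce the theorem to two structural properties of $\mathbf{G}_n$, each holding whp, and then derive the statement deterministically from them. Fix a non-trivial partition $V_1\sqcup V_2$ with $|V_1|\le n/2$. The argument has two phases. In the first, CR spreads the initial asymmetry until it has produced many colour classes of controlled size; this should take about $\diam(\mathbf{G}_n)+O(1)$ rounds. In the second, the resulting colouring is refined to a discrete one in another $\diam(\mathbf{G}_n)+O(1)$ rounds. Since there are exponentially many partitions, no union bound over partitions is possible. All probabilistic input must therefore come from partition-free expansion and "neighbourhood uniqueness" properties of $\mathbf{G}_n$, and these will be established by switchings or by known edge-distribution results for $\mathcal{G}_{n,d}$ (for instance, sandwiching with $\mathcal{G}(n,p)$ when $d\gg\log n$, and the configuration model for smaller $d$).

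\textbf{Phase 1 (growth).} After one round, $v$ receives the colour $(C_0(v),|N(v)\cap V_1|)$. I would show that whp every vertex set $S$ with $1\le |S|\le n/2$ has an edge boundary of order $d|S|$ by a constant factor (a spectral gap bound $\lambda=O(\sqrt d)$ suffices), together with a discrepancy statement: the degrees into $S$ take many distinct values unless $S$ is very structured. From this I would deduce that within $O(1)$ rounds some colour class becomes small, say of size between $1$ and $n/d$, or there is a colour class that distinguishes a single vertex. Once a class $W$ is small, the counts $|N(v)\cap W|$ split vertices according to their distance from $W$. Iterating along BFS layers, after $\diam(\mathbf{G}_n)+1$ rounds every vertex is coloured at least by its distance profile to the small classes. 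This is where the first $\diam$ term in the bound comes from.

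\textbf{Phase 2 (individualisation).} I would aim to show that some vertex $x$ becomes a singleton colour class. Then the colour of $v$ after a further $\diam(\mathbf{G}_n)+2$ rounds encodes the layered structure around $x$: the numbers of neighbours in each distance layer from $x$, refined recursively. The whp property needed here is that, for every vertex $x$, the iterated counts $|N(v)\cap L_i(x)|$ over the refined layers separate all pairs of vertices. For a fixed $x$, this holds with failure probability $o(1/n)$ by a switching argument comparing the neighbourhoods of two vertices $u,v$ in the same layer. Taking a union bound over $x$ and over pairs $u,v$ then gives the property simultaneously for all $x$.

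\textbf{Main obstacle.} I expect the hardest step to be the transition from an arbitrary partition to a singleton class, since it must hold for all $2^n$ partitions at once. The specific concern is partitions that are "almost equitable" in $\mathbf{G}_n$, where every vertex has nearly the same number of neighbours in $V_1$. For these, I would first try to prove that whp $\mathbf{G}_n$ has no non-trivial equitable partition with fewer than $n$ parts. Counting arguments in the configuration model should handle parts of size at least $d^{\,c}$, and expansion should handle small parts. One also has to check that if some equitable partition other than the discrete one were stable, it would contradict this statement. The remaining difficulty is making the bound on the number of rounds explicit, which needs tracking that each phase uses at most $\diam+O(1)$ rounds.
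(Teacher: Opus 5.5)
\paragraph{Gap: the hard step is not argued.} You never argue the step you flag as the main obstacle, and your fallback is circular. Started from any non-trivial partition, CR stabilises at an equitable partition with at least two parts. So ``$\mathbf{G}_n$ has no equitable partition other than the trivial and the discrete one'' \emph{is} the theorem (without the round count), not a lemma towards it.

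Phase~1 likewise gives no mechanism producing a singleton, or even a small colour class, from an arbitrary $V_1\sqcup V_2$. Phase~2 is itself the special case $V_1=\{x\}$ of the theorem. It also cannot be settled by a switching that compares the neighbourhoods of two vertices $u,v$ in the same layer. The distance-layer partition around $x$ is essentially one giant class: for $d=n^{2/5}$, all but $O(n^{4/5})$ vertices lie at distance at least $3$ from $x$. Such a comparison only becomes effective once CR has split the vertex set into many classes of linear size, and that growth and anti-concentration step is exactly what you have not supplied.

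\paragraph{The missing idea: a union bound over partitions is possible.} Contrary to your premise, this works provided the failure probabilities are exponentially small at a sufficient rate. The paper uses two stages.

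First, it shows that whp, for every non-trivial $V_1\sqcup V_2$, after at most $\diam(\mathbf{G}_n)+2$ rounds the CR colouring coarsens to $k$ parts with sizes in $[n/3k,3n/k]$. This rests on two ingredients, whose failure probabilities beat the $2^n$ choices of $U$:
\begin{enumerate}
\item an expansion lemma for $|S_r(U)|$, valid for all small $U$ simultaneously;
\item switching-based Littlewood--Offord-type bounds: for every $U$ with $|U|\in[n\ell/d,n/2]$ and every $s$, at most $10n/\ln\ell$ vertices outside $U$ have exactly $s$ neighbours in $U$.
\end{enumerate}
In the dense case, McKay--Wormald enumeration with hypergeometric anti-concentration plays the second role. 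Sandwiching with $\mathcal{G}(n,p)$ cannot give such non-monotone, exponentially small bounds.

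Second, take each of the at most $k^n$ such partitions and each pair $u,v$. The paper finds $\Theta(\varepsilon n)$ vertices near $u$ whose degree profiles into all $k$ parts are forced by a colour-preserving map into the neighbourhood of $v$. Iterated switchings bound this event by $(k/d)^{\Omega(\varepsilon kn)}$. Having $k$ constraints per vertex on linearly many vertices is what beats the union bound factor $k^n d^{2\varepsilon n}$.

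The bound $2\diam(\mathbf{G}_n)+3=(\diam(\mathbf{G}_n)+2)+(\diam(\mathbf{G}_n)+1)$ comes from these two stages, not from an individualised vertex.
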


\begin{remark}
We did not try to optimise the bound on the number of rounds, and we believe that $2\diam (\mathbf{G}_n)+3$ is suboptimal. In particular, for $d\geq n^{1/2+\varepsilon}$, we show that $2\diam (\mathbf{G}_n)+1=5$ rounds is enough. Actually, it is natural to suspect that the total number of rounds needed is $(1+o_{\diam (\mathbf{G}_n)}(1))\diam (\mathbf{G}_n)$ whp. We also note that our technique does not directly generalise to small values of $d$; in particular, there appears to be a natural barrier at $d=3$. Nevertheless, we believe that the statement of \cref{thm:main} can be extended to all $3\leq d\leq n/2$.
\end{remark}

We now describe a possible approach to canonical labelling of $\mathbf{G}_n\sim\mathcal{G}_{n,d}$ based on our main result. Since the output of CR can be computed in time $O((n+|E(G)|)\log n)$ on an $n$-vertex graph $G$~\cite{BBG}, \cref{thm:main} reduces the problem to efficiently finding an isomorphism-preserving partition of $[n]$. To this end, recall that, for any $d=\omega(1)$, whp $\mathbf{G}_n$ contains a triangle~\cite{KSV}. Let $t_i$ be the number of triangles that contain the vertex $i\in[n]$ in $\mathbf{G}_n$, and let $t=\max\{t_1,\ldots,t_n\}.$ Using fast square matrix multiplication, it is possible to compute the vector $(t_1,\ldots,t_n)$ in time $n^{\omega}$, where $\omega<2.372$~\cite{matrix}. Alternatively, this vector can be computed in time $O(nd^2)$ using the standard triangle-listing algorithm~\cite{listing}, which is faster when $d<n^{0.685}$, assuming the best known upper bound on the matrix multiplication exponent $\omega$ (and yields the bound $o(n^2)$ for $d=o(\sqrt{n})$, which is faster than any algorithm based on square matrix multiplication since $\omega\geq 2$). Then, we can partition $[n]$ into sets $V_1,V_2$ where $V_1$ contains all vertices $i$ for which $t_i=t$ and $V_2$ contains the rest. It remains to show that this partition is non-trivial.
 
Since whp $\mathbf{G}_n$ contains a triangle, $V_1$ is non-empty. When $d=o(n^{1/3})$, the number of triangles is sublinear whp (by \cref{cl:H-inclusion} below), which immediately implies that there are vertices that do not belong to a triangle, and so the set $V_2$ is non-empty as well. In order to show that $V_2$ is non-empty (i.e., that there are two vertices $u,v$ with $t_u\neq t_v$) whp when $d=\omega(n^{1/3})$, fix two vertices $u,v$ and expose their neighbourhoods $N(u),N(v)$. Then expose the edges inside $N(u)$. Assuming $t_u=t_v$, the set of exposed edges identifies the number of edges in $E(\mathbf{G}_n[V])\setminus E(\mathbf{G}_n[U])$. By standard counting arguments (or using switchings), it follows directly that, for any fixed  $x=x(n)\in\mathbb{Z}_{\geq 0}$, the probability that the latter set contains exactly $x$ edges tends to zero. Finally, when $d=\Theta(n^{1/3})$, it is known that the number of triangles $X_n$ in $\mathbf{G}_n$ satisfies a central limit theorem: $\frac{X_n-\mathbb{E}X_n}{\sqrt{\mathrm{Var}X_n}}$ converges in distribution to a standard normal random variable as $n\to\infty$~\cite{Gao-triangles}. In particular, if $\mathbb{P}(\forall i\,\, t_i=t)>\varepsilon$, then $\mathbb{P}(X_n/n\in\mathbb{Z})>\varepsilon$, contradicting the central limit theorem. Therefore, we get the following.


\begin{corollary}
Let $d=d(n)$ be such that $d\to\infty$ and $n-d\to\infty$, and let $\mathbf{G}_n\sim\mathcal{G}_{n,d}$. There exists an algorithm that runs in time $O(\min\{n^{\omega},nd^2+nd\log n\})$ on $d$-regular $n$-vertex graphs and whp outputs a canonical labelling of $\mathbf{G}_n$.
\end{corollary}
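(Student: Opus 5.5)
The algorithm is the one described just before the statement. It is assembled from \cref{thm:main} together with the triangle-based initial partition; the complement symmetry handles $d>n/2$.

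\emph{Step 1: reduction to $d\le n/2$.} Given a $d$-regular input with $d>(n-1)/2$, we first replace it by its edge complement, which takes time $O(n^2)$. This is within budget because $nd^2=\Omega(n^3)$ in this range. A canonical labelling of the complement is also a canonical labelling of the original graph. Moreover, the complement of $\mathcal{G}_{n,d}$ is distributed as $\mathcal{G}_{n,n-1-d}$, and $n-1-d\to\infty$ by hypothesis. So it suffices to treat $d_0\le d\le n/2$ with $d\to\infty$. For all sufficiently large $n$ we then have $d\ge d_0$, and \cref{thm:main} applies.

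\emph{Step 2: the algorithm.}
\begin{enumerate}
\item Compute the triangle counts $(t_1,\dots,t_n)$, either by fast matrix multiplication (time $n^\omega$) or by triangle listing (time $O(nd^2)$), whichever bound is smaller.
\item Set $V_1=\{i: t_i=\max_j t_j\}$ and $V_2=[n]\setminus V_1$. This partition is isomorphism-invariant, since triangle counts are preserved by isomorphisms.
\item Run CR from the colouring $(V_1,V_2)$ using the implementation of~\cite{BBG}, in time $O((n+nd)\log n)=O(nd\log n)$. Colour names are generated canonically, for instance by sorting the signatures at each round.
\item If the output is discrete, number the vertices by the lexicographic order of their colours. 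Otherwise, fall back to any fixed canonical labelling procedure; this only occurs with probability $o(1)$.
\end{enumerate}
The output depends only on isomorphism-invariant data, so it is a valid canonical labelling whenever the colouring is discrete. The total running time is $O(\min\{n^\omega, nd^2\}+nd\log n)$. Since $nd\log n\le n^{\omega}$, this is $O(\min\{n^\omega,nd^2+nd\log n\})$.

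\emph{Step 3: correctness whp.} By \cref{thm:main}, whp every non-trivial bipartition yields a discrete CR output. It therefore remains to show that whp the partition $(V_1,V_2)$ is non-trivial, i.e.\ that the $t_i$ are not all equal. Non-triviality also requires $V_1\ne\emptyset$, which holds automatically, and indeed whp there is a triangle~\cite{KSV}. The main obstacle is excluding $t_1=\dots=t_n$, and we follow the case split sketched above.
\begin{itemize}
\item For $d=o(n^{1/3})$, \cref{cl:H-inclusion} shows that the number of triangles is $o(n)$ whp. Hence some vertex lies in no triangle while some vertex lies in one.
\item For $d=\omega(n^{1/3})$, we expose $N(u)$, $N(v)$ and the edges inside $N(u)$. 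The event $t_u=t_v$ then pins the number of edges in $E(\mathbf{G}_n[N(v)])\setminus E(\mathbf{G}_n[N(u)])$ to a single value. A switching argument shows that this count is spread out, with point probabilities $o(1)$, so $\mathbb{P}(t_u=t_v)=o(1)$.
\item For $d=\Theta(n^{1/3})$, if all $t_i$ are equal then $3X_n=nt_1$. This forces $3X_n/n\in\mathbb{Z}$, an event that the CLT of~\cite{Gao-triangles} makes $o(1)$ because $\mathrm{Var}\,X_n\to\infty$ at scale much larger than $n^2$ is not needed; it suffices that the lattice $\frac{n}{3}\mathbb{Z}$ has spacing $o(\sqrt{\mathrm{Var}X_n})$, or a local refinement of the CLT.
\end{itemize}
A general $d$ sequence is handled by passing to subsequences that fall into one of the three regimes. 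This case analysis, and especially the switching estimate in the dense case, is where the real work lies.
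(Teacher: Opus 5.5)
Your plan is the paper's: complement to reduce to $d\le n/2$, split by maximal triangle count, apply \cref{thm:main}, and prove non-triviality in the same three regimes. Two points need fixing. The first is a genuine error in the argument.

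In the regime $d=\Theta(n^{1/3})$, you state backwards the condition under which the CLT excludes $3X_n/n\in\mathbb{Z}$. Write $\sigma_n=\sqrt{\mathrm{Var}X_n}$. The normalised lattice $(\frac{n}{3}\mathbb{Z}-\mathbb{E}X_n)/\sigma_n$ has spacing $n/(3\sigma_n)$. If $n/3=o(\sigma_n)$, as you propose, this lattice becomes dense and a CLT gives nothing. For instance, $\frac{n}{3}\,\mathrm{Bin}(N_n,\tfrac12)$ with $N_n\to\infty$ satisfies a CLT yet lies on $\frac{n}{3}\mathbb{Z}$ surely. What you need is the opposite, $\sigma_n=O(n)$. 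Then only $O(1+M)$ lattice points lie within $M\sigma_n$ of $\mathbb{E}X_n$. Since convergence of distribution functions to the continuous normal law is uniform, each of these points carries probability $o(1)$. Letting $M\to\infty$ gives $\mathbb{P}(3X_n/n\in\mathbb{Z})=o(1)$. This condition does hold. Sum \cref{cl:regular-probability-bound} over ordered pairs of triangles by overlap type (equal, sharing an edge, sharing a vertex, disjoint). This gives $\mathrm{Var}X_n\le\mathbb{E}X_n^2=O(d^3+d^5/n+d^6/n+d^6)=O(n^2)$ for $d=\Theta(n^{1/3})$. So your conclusion is right, but the justification must be replaced by this one, and no local limit theorem is needed.

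Second, the fallback in step~4 breaks the time bound, which is claimed for every $d$-regular input. No known canonical labelling procedure runs in time $O(\min\{n^\omega,nd^2+nd\log n\})$ on the graphs where CR stalls, such as vertex-transitive ones. Instead, output the colour order with arbitrary tie-breaking, or report failure. The triangle partition and CR are isomorphism-equivariant, so the set of inputs on which CR ends discrete is closed under isomorphism. Hence the output is a canonical labelling on this whp event, and nothing is required off it.
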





\paragraph{Related work.} In~\cite{Bol-WL,Kucera}, it is proved that, when $d\leq n^{\varepsilon},$ for a sufficiently small $\varepsilon$, then with high probability $\mathbf{G}_n\sim\mathcal{G}_{n,d}$ admits a canonical labelling via the 2-dimensional Weisfeiler-Leman algorithm (2-WL)~\cite{WL}, which is a generalisation of CR where the colouring is applied to pairs of vertices, and whose running time is $O(n^3 \log n)$~\cite{WL-time}. The weaker version of 2-WL suggested by Boll\'{o}bas~\cite{Bol-WL} canonically labels $\mathbf{G}_n$ in time $O(n^{3/2+\varepsilon})$ whp. Moreover, Ku\v{c}era~\cite{Kucera} claimed that his version of the algorithm runs in average time $O(nd)$ for $d=O(1)$ which we failed to verify\footnote{The paper does not provide a proof of this fact. However, the algorithm computes the vertices on shortest cycles as a subroutine and uses the following assertion~\cite[Theorem 3.1]{Kucera}: All cycles of the length $k$ in $d$-regular graph can be found in time $O(n\min\{n,(d-1)^{k/2}\})$. First, we believe that the factor $(d-1)^{k/2}$ should instead read $(d-1)^{\lceil k/2\rceil}$ (for instance, it is unclear how all triangles could be found in time $nd^{1.5}$ --- say, in a union of $(d+1)$-cliques, there are $\Theta(nd^2)$ triangles, which gives the lower bound $\Omega(nd^2)$ on time needed to list them). Second, this bound (even with the fractional power) is not enough to get the expected time $O(nd)$. Indeed, $\mathbf{G}_n$ has a triangle with asymptotic probability $1-\exp(-(d-1)^3/6)$. So we get the bound on the expected time to be at least $(1-\exp(-(d-1)^3/6)-o_n(1))nd^{1.5}\sim nd^{1.5}$ as $d\to\infty$.}. We therefore conclude this paragraph by asking whether there exists a linear-time algorithm --- running in time $O(nd)$ on $d$-regular $n$-vertex graphs --- that canonically labels $\mathbf{G}_n$ whp, at least for some $3\leq d\leq n/2$ (either for all $d=O(1)$, where the hidden constant factor does not depend on the time complexity, or for some $d=\omega(1)$).

For binomial random graphs $\mathcal{G}(n,p)$, the study of canonical labelling algorithms has been more extensive. Babai, Erd\H{o}s, and Selkow~\cite{BES} proved that CR outputs a canonical labelling of $\mathcal{G}(n,1/2)$ in linear time whp since it performs only a bounded number of refinement steps. The argument of \cite{BES} can be extended to show \cite[Theorem~3.17]{Bollobas_book} that the CR colouring of $\mathcal{G}(n,p)$ is whp discrete for all $n^{-1/5}\ln n\ll p\leq 1/2$. Bollob\'{a}s~\cite{Bol-WL} showed a polynomial time canonical labelling algorithm for $(1+\varepsilon)\frac{\ln n}{n}\leq p\leq 2 n^{-11/12}$, which is a weaker version of 2-WL. The next improvement was obtained by Czajka and Pandurangan~\cite{CzajkaPan}: they extended the range of applicability of CR to $\frac{\ln^4 n}{n\ln\ln n}\ll p\leq \frac{1}{2}$, which was finally extended to $(1+\varepsilon)\frac{\ln n}{n}\leq p\leq\frac{1}{2}$ by Gaudio, R\'{a}cz, and Sridhar \cite{GRS}. Linial and Mosheiff~\cite{LM} showed that 2-WL outputs canonical labelling of $\mathcal{G}(n,p)$ whp when $\frac{1}{n}\ll p\leq\frac{1}{2}$. Finally, a polynomial time algorithm that labels canonically $\mathcal{G}(n,p)$ with high probability for all $0\leq p=p(n)\leq 1$ was independently established in~\cite{AKM,VZ}, with CR as the main ingredient (note that below the connectivity threshold $\frac{\ln n}{n}$, whp $\mathcal{G}(n,p)$ contains many isolated vertices, so CR-colouring is not discrete). 

A partition $[n]=V_1\sqcup\ldots\sqcup V_t$ of the vertex set of a graph $G$ is called {\it equitable} if, for any $1\leq i,j\leq t$, not necessarily different, any two vertices in $V_i$ have exactly the same number of neighbours in $V_j$. Clearly, if a graph $G$ admits a non-trivial equitable partition $V_1\sqcup\ldots\sqcup V_t$, then CR does not refine it. The opposite statement is also true --- if there is a partition that CR does not refine, then this partition is equitable. \cref{thm:main} implies that whp CR refines any non-trivial partition of $G_{n,d}$ with any number of parts. Therefore, it implies that whp $\mathbf{G}_n\sim\mathcal{G}_{n,d}$ does not have an equitable partition other than those with 1 part or $n$ parts. If a graph $G$ has a non-trivial automorphism group $\mathrm{Aut}(G)$, then for any non-trivial automorphism $\sigma\in\mathrm{Aut}(G)$, its cycle decomposition identifies an equitable partition of $G$. Therefore, if a graph does not have a non-trivial equitable partition, then its group of automorphisms is either trivial or cyclic. However, in the second case the graph must be vertex-transitive, which is not the case for $\mathbf{G}_n$ whp. In particular, as noted above, whp there exist two vertices that belong to different numbers of triangles. As a result, \cref{thm:main} implies that $\mathbf{G}_n$ is asymmetric whp for all $d$ such that $d\to\infty$ and $n-d\to\infty$. This is a known result for the entire range $3\leq n\leq d-4$, having been first established for $d=o(\sqrt{n})$ in~\cite{MW-asymmetry} and then for $d\gg\log n$ in~\cite{KSV-asymmetry}. 

\paragraph{Proof strategy.} The key step to obtain \cref{thm:main} is to show that after some number of rounds of CR, one can coarsen the partition associated with colours to get a partition with $k$ parts of comparable size (for any desired arbitrarily large constant~$k$). The following statement makes this precise. Note that we may coarsen a partition at any stage of the CR algorithm if it is convenient for the argument that follows. Clearly, one may couple the original process with the modified one so that the partitions in the original process are refinement of the corresponding partitions in the modified one. In particular, if the modified process reaches discrete colouring, then so does the original one.

\begin{theorem}\label{thm:main_many_large_set}
Let $d_0$ be large enough, let $d_0\leq d=d(n)\leq n/2$, and let $\mathbf{G}_n \sim \mathcal{G}_{n,d}$. Let $k\in\mathbb{N}$ be an arbitrary constant. Then, the following holds whp: for every non-trivial partition $[n]=V_1\sqcup V_2$ of the vertex set of $\mathbf{G}_n$, after $\diam (\mathbf{G}_n)+2$ rounds of CR, there exists a partition $[n] = V_1 \sqcup \ldots \sqcup V_k$ such that for any $i \in [k]$, $n/3k \le |V_i| \le 3n/k$ and $V_i$ is a union of some colour classes (in other words, $V_1 \sqcup \ldots \sqcup V_k$ is a coarsening of the partition associated with resulting colour classes). 
\end{theorem}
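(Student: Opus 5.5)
The plan is to track how CR spreads the initial asymmetry outward from the smaller class and to show that, after $\diam(\mathbf{G}_n)+2$ rounds, the colour classes are fine enough that any family of classes can be grouped greedily into $k$ blocks of comparable size. Since the colour classes form a partition, it suffices to prove that whp, for every non-trivial $V_1\sqcup V_2$, every colour class after $\diam(\mathbf{G}_n)+2$ rounds has size at most $n/(3k)$. Given this, we sort the classes in any order and merge them greedily, closing a block as soon as its size reaches $n/k-n/(3k)$. Each block then has size in $[n/(3k), n/k+n/(3k)]$. The final leftover block is merged into the previous one, which keeps every size at most $3n/k$. Adjusting the number of blocks to exactly $k$ needs a little care, but it follows from the same bound once the classes are small.

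Without loss of generality $|V_1|\le n/2$. Let $S_j$ be the set of vertices at distance exactly $j$ from $V_1$.

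\textbf{Propagation.} By induction, after round $j$ the colouring refines the distance partition $\{V_1,S_1,\dots,S_j,\text{rest}\}$. So after $\diam(\mathbf{G}_n)$ rounds, the distance to $V_1$ is encoded in the colour. In addition, a vertex's colour records how many neighbours it has in each distance layer.

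\textbf{Expansion.} We need a whp structural statement about $\mathbf{G}_n$ that holds simultaneously for all sets $V_1$. Natural candidates are edge-distribution (discrepancy) bounds for $d$-regular random graphs, for example via switchings or sandwiching with $\mathcal{G}(n,p)$. With $p=d/n$, these give
\[
e(A,B)=p|A||B|\pm O\bigl(\sqrt{d|A||B|}\bigr)+\text{(small-set corrections)}.
\]
The aim is to use them to show that the counts of neighbours in fixed layers take many distinct values and spread vertices out.

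\textbf{Anti-concentration (main obstacle).} The central step is to show that in one fixed large layer $L$ (one of size $\Theta(n)$), the function $v\mapsto |N(v)\cap W|$ has no value class larger than $n/(6k)$, for some union $W$ of earlier colour classes. For fixed $W$ and fixed $v$, the variable $|N(v)\cap W|$ is approximately hypergeometric with variance roughly $d\,|W|(n-|W|)/n^2$. Once $d$ is large, its point probabilities are $O(1/\sqrt{d\min(\beta,1-\beta)})$, where $\beta=|W|/n$; this is below $1/(10k)$ when $d\ge d_0(k)$, for $\beta$ bounded away from $0$ and $1$. A concentration or switching argument would then bound the number of vertices realising any single value.

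The difficulty is the union bound over all $2^n$ choices of $V_1$. I would handle it in one of two ways. The first option is to note that $W$ is determined by CR from few parameters (a ball structure), so the number of relevant sets is smaller. The second option is to use the expansion property to reduce to sets of the form ``distance layers'', and then apply a switching argument that gives probability $e^{-\Omega(n)}$ bounds sufficient to beat $2^n$ when $d$ is large.

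\textbf{Case split.} A separate case handles $|V_1|$ tiny. There, $S_1$ has size about $d|V_1|$, the distance layers grow geometrically, and the two extra rounds are used to reach a layer that has linear size and a non-degenerate complement, so that the hypergeometric spread above applies.
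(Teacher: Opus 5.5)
Your outline has the right overall shape: spread the asymmetry outward, prove anti-concentration of neighbour counts uniformly over all sets, then merge classes greedily. The gap is in the step meant to feed the anti-concentration. You assume that after the allotted rounds there is a layer (or union of layers) $W$ with $|W|/n$ bounded away from $0$ and $1$, and you run the hypergeometric estimate only in that regime. When $d\to\infty$ such a set in general does not exist. For $d=n^{2/5}$ and $V_1=\{v\}$, the spheres around $v$ have sizes about $1$, $n^{2/5}$, $n^{4/5}$ and $n-o(n)$. For $|V_1|=n^{0.7}$ and $d=n^{0.4}$, the neighbourhood of $V_1$ already contains all but $o(n)$ vertices.

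So the set against which neighbour counts are taken is generally sublinear. The heart of the proof is anti-concentration when the expected number of neighbours in $W$ is only a large constant $\ell$, not $\Theta(d)$. This is what \cref{lm:from_single_set-to_partition-gen} provides: for every $U$ with $|U|\in[n\ell/d,n/2]$, no value of $|N(x)\cap U|$ is taken by more than $10n/\ln\ell$ vertices outside $U$. It is proved by a switching analogue of the Erd\H{o}s--Littlewood--Offord theorem. That argument shows a fixed set of $t=10n/\ln\ell$ vertices all have exactly $s$ neighbours in a fixed $U$ with probability at most $\ell^{-\Omega(t)}$, which beats the $2^n\binom{n}{t}$ union bound because $\ell=e^{30k}$. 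Your target of point probabilities below $1/(10k)$ would not survive this union bound; you need $e^{-\Theta(k)}$. In the dense case, \cref{cl:intermediate_large} likewise works for all $|V|\ge Cn/d$, not only linear $|V|$.

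Several other pieces are also missing. Reaching scale $n\ell/d$ from a tiny $V_1$ is not just geometric layer growth. The paper does it in three stages:
\begin{itemize}
\item \cref{lm:common_expansion_Thm3}, uniform over all small $U$, gives a sphere of size in $[cn/(2d),cn]$.
\item One more round pushes the size above $n\ell/d$.
\item For $|U|\in[n/(2d),n\ell/d]$, where neighbour counts have constant mean, \cref{lem:exactly_s_neighbours} and a case analysis extract a union of classes of size in $[n\ell/d,n/2]$.
\end{itemize}
In the dense case, \cref{cl:intermediate_small} plays the first role via thresholding and sandwiching.

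Your option (a) for the union bound is not available. $V_1$ is arbitrary and $W$ may be $V_1$ itself, so the anti-concentration must hold for all sets of the relevant size simultaneously.

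Finally, making classes small inside one layer does not yet give your reduction that all classes have size at most $n/(3k)$. You need one more round that uses a union of the new small classes, of size between $n/(3k)$ and $2n/(3k)$, to split the remaining vertices. This is what happens at the end of the proof of \cref{thm:sparse_many_large_set}.
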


Once there are many parts of linear and comparable sizes, CR distinguishes all vertices after some additional number of rounds.

\begin{theorem}\label{thm:main_complete_refinement}
Let $d_0$ be large enough, let $d_0\leq d=d(n) \leq n/2$, and let $\mathbf{G}_n \sim \mathcal{G}_{n,d}$. There exists some universal large constant $k\in\mathbb{N}$ such that the following holds whp: for every initial partition $[n] = V_1 \sqcup \ldots \sqcup V_k$ such that for any $i \in [k]$, $n/3k \le |V_i| \le 3n/k$, CR terminates on $\mathbf{G}_n$ after at most $\diam (\mathbf{G}_n)+1$ rounds and outputs a discrete colouring.
\end{theorem}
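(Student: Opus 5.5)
The plan is to show that, whp, every pair of distinct vertices $u\neq v$ receives different colours within $\diam(\mathbf{G}_n)+1$ rounds, simultaneously for all admissible initial partitions. Fix a partition $V_1\sqcup\ldots\sqcup V_k$ with parts of size between $n/3k$ and $3n/k$. After one round, the colour of a vertex $w$ records the vector $(|N(w)\cap V_1|,\ldots,|N(w)\cap V_k|)$. After $r$ rounds, the colour records iterated profiles of this kind. The mechanism is as follows: since $u\neq v$, there is a radius $r\le\diam(\mathbf{G}_n)$ at which the distance profiles from $u$ and $v$ become asymmetric. Propagating the one-round profile vectors along breadth-first layers should then distinguish $u$ from $v$ at round $r+1$.

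Since the number of partitions is $k^n$, a union bound over partitions is too expensive when combined with a per-pair probability. I would therefore avoid unioning over partitions and instead prove deterministic expansion and discrepancy properties of $\mathbf{G}_n$ that hold whp and imply the conclusion for every partition. The key property: for every pair $u\neq v$ and every set $S$ with $|S|\ge n/3k$, the neighbourhoods satisfy good codegree and edge-distribution bounds. Concretely, $|N(u)\cap N(v)|$ is small compared to $d$, and every large set of vertices has $e(X,Y)$ close to $d|X||Y|/n$. Both facts follow from the switching/counting results on $\mathcal{G}_{n,d}$ stated earlier, including \cref{cl:H-inclusion} and the standard spectral-gap bound $\lambda=O(\sqrt d)$ for $d$ below $n/2$.

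The core step uses these properties to show that the multiset of one-round profile vectors in $N(u)\setminus N(v)$ differs from that in $N(v)\setminus N(u)$. The symmetric difference has size about $2d$. I would argue that the profiles of the vertices in it are spread out: the coordinates $|N(w)\cap V_i|$ fluctuate on the scale $\sqrt{d/k}$. An anti-concentration statement, that many vertices in any fixed set of size about $d$ have pairwise distinct profiles, then makes it impossible for the two multisets to coincide. Where $u$ and $v$ are adjacent or close together, I would instead compare the layered neighbourhoods at the first distance $r$ at which their spheres $S_r(u)$ and $S_r(v)$ differ substantially. This is what the $\diam+1$ round bound accommodates.

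The main obstacle is the anti-concentration step uniformly over all partitions. A partition is an arbitrary adversarial choice, so I would first need a whp statement of the following form: for every set $W$ of size $\gtrsim d$ and every $k$-partition into linear parts, the profile map on $W$ is far from constant on the relevant sets. I would prove it via an expansion argument instead of a probabilistic one. Suppose many vertices in $W$ shared identical profiles for every part. Then the bipartite graph between $W$ and each $V_i$ would be too regular, contradicting the discrepancy bounds via the expander mixing lemma together with $k$ large. If the expander mixing lemma is too weak for small $d$ (close to $d_0$), I would fall back on a direct first-moment count over small witness structures, using the fact that $d\ge d_0$ with $d_0$ large.
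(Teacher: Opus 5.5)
There is a genuine gap, and it sits exactly where you abandon the probabilistic argument. You replace the union bound over partitions by a deterministic anti-concentration statement derived from expansion, and that statement cannot be proved because it is false. The expander mixing lemma, like any discrepancy or codegree bound, only bounds how far edge counts deviate from their averages. A set $W$ whose vertices all have exactly the same average profile $(|N(w)\cap V_i|)_{i\le k}$ is perfectly consistent with it, so no contradiction arises. In fact pseudorandomness alone cannot give the theorem. For $q=p^m\equiv 1\pmod 4$, the Paley graph on $\mathbb{F}_q$ has $\lambda=O(\sqrt d)$ and all codegrees close to $d/2$, just like $\mathcal{G}_{n,(n-1)/2}$. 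Yet for $k=p^j$ the cosets of an additive subgroup of index $k$ form a balanced equitable partition, which CR never refines. Your uniform claim also fails outright in the sparse range. If $|N(W)|=o(n)$ and the neighbourhoods of the vertices of $W$ are disjoint, an adversary can split each $N(w)$ evenly among the parts and balance the parts elsewhere. Likewise, whenever $B_r(u)$ and $B_r(v)$ are disjoint, isomorphic as rooted graphs (e.g.\ trees) and of total size $o(n)$, the adversary can colour them isomorphically and force $C_r(u)=C_r(v)$. So a round-$2$ comparison of $N(u)\setminus N(v)$ with $N(v)\setminus N(u)$ cannot work for sparse $d$. The $\diam(\mathbf{G}_n)+1$ rounds are needed for far-apart pairs as well: the point is not asymmetry of spheres, but that CR must reach sets of linear size before an adversarial partition stops fooling it.

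The union bound over the at most $k^n$ partitions is in fact affordable, and it is what the paper uses. It only looked too expensive because the $O(d)$ profiles in $N(u)\triangle N(v)$ give at best a bound of order $e^{-O(dk\ln d)}$. The fix is to force profiles on a set of linear size, which gives $(k/d)^{\Omega(kn)}$. In the sparse case, set $t=\diam(\mathbf{G}_n)+1$ and $r=\lfloor\log_{d-1}(\varepsilon n)\rfloor$. Via \cref{cl:colours_descent}, the equality $C_t(u)=C_t(v)$ yields a map $f$ from $B_r(u)\setminus B_r(v)$ into $B_r(v)$, built layer by layer along edges and preserving $C_{t-i}$ on the $i$-th layer. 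Take $U\subset S_r(u)\setminus B_r(v)$ of size $\varepsilon n/(2d)$. Then \cref{lm:common_expansion} provides $\Theta(\varepsilon n)$ vertices $x\in N(U)\setminus(N(f(U))\cup B_r(u)\cup B_r(v))$ with $C_1(x)=C_1(f(x))$ and $f(x)\in N(f(U))$. After exposing the two balls, $N(U)$, $N(f(U))$ and $N(N(f(U)))$, the $k$-dimensional profiles of these $x$ with respect to the parts of the unexposed set $X$ are prescribed. The multidimensional Littlewood--Offord switching bound (\cref{cl:large-switchings-2}) shows this happens with probability at most $(k/d)^{\varepsilon nk/500}$. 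That beats the $d^{2\varepsilon n}$ choices of $f$, the $k^n$ partitions and the $n^2$ pairs once $k$ and $d_0$ are large. In the dense case the same scheme runs at radius $2$ (the set $M''$), with enumeration of regular graphs and \cref{cor:binomial_product} in place of switchings. Your fallback first-moment count over ``small witness structures'' does not rescue the argument, because only witnesses of linear size give bounds that survive the $k^n$ union bound.
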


\cref{thm:main} follows immediately from \cref{thm:main_many_large_set} and \cref{thm:main_complete_refinement}. 
To prove each of these two theorems, we address two regimes for $d$ separately: the dense case with $d\geq n^{1/2+\varepsilon}$, and the sparse case with $d_0\leq d\leq n^{10/17}$. The corresponding statements are reiterated in \cref{sc:dense-many-large,,sc:sparse-many-large,,sc:dense-full,,sc:sparse-full}.

\cref{thm:main_many_large_set} for $d\geq n^{1/2+\varepsilon}$ is proved in \cref{sc:dense-many-large}. Its proof consists of two parts. First, we show that whp after one refinement round there exists a coarsening $[n]=U_1\sqcup U_2$ of the CR-partition such that $|U_1|,|U_2|\gg n/d$ (\cref{cl:intermediate_small}). One more round is needed to get all colour classes of size at most $\delta n$, for an arbitrary constant $\delta>0$ (\cref{cl:intermediate_large}). The latter claim follows from the fact that there is no large set with all vertices having the same degree profile with respect to $(U_1,U_2)$. This is the main technical complication in the proof of \cref{sc:dense-many-large} in the dense case: although this fact is easy to show in $\mathcal{G}(n,p)$, in random regular graphs we cannot rely on local limit theorems. Instead we use asymptotic estimations of the number of graphs with a given degree sequence as well as anti-concentration properties of the hypergeometric distribution. The sparse case $d=o(n)$ is addressed in \cref{sc:sparse-many-large}. Here, we show that, for every initial colouring $V_1\sqcup V_2$, where $|V_1|<cn$, for a sufficiently small constant $c>0$, after a few rounds of colour refinement, we will get a union of colour classes $U$ of size $|U|\in[cn/d,cn]$ (\cref{lm:common_expansion_Thm3}). One more round is needed to get a set $U'$ of size $|U'|\in[n\ell/d,0.999n]$, for an arbitrarily large constant $\ell$~(\cref{lem:exactly_s_neighbours}). Then, similarly to the dense case, we show that there is no set of size more than $\delta n$ such that all its vertices have same number of neighbours in $U'$, that gives us the desired partition (\cref{lm:from_single_set-to_partition-gen}). All three lemmas rely on switching arguments. In particular, the last two lemmas use switchings to establish an analogue of the Erd\H{o}s--Littlewood--Offord theorem in the context of uniformly random graphs with a fixed degree sequence.

\cref{thm:main_complete_refinement} is proved in \cref{sc:dense-full,,sc:sparse-full}. The dense case is significantly easier. For instance, when $d=\Theta(n)$, two refinement rounds are enough to obtain a discrete colouring whp. Indeed, let $u,v$ be two fixed vertices. Expose the neighbourhoods $N(u),N(v)$, and all the edges that touch $N(v)$. The exposed edges identify degree profiles of vertices in $N(u)\setminus N(v)$ with respect to the fixed partition. Since the latter set has size $\Theta(n)$, it is extremely unlikely that all the degrees are equal to the fixed values. Clearly, the probability of this event is $\left(1/\sqrt{n/k}\right)^{\Theta(kn)}$ in $\mathcal{G}(n,p)$, which is enough to overcome the union bound with room to spare. In order to transfer this bound to random regular graphs, we use asymptotic enumeration of graphs with a given degree sequence and anti-concentration inequalities for hypergeometric distribution, as for the dense case in \cref{thm:main_many_large_set}. For $n^{1/2+\varepsilon}\leq d=o(n)$, we need one additional refinement round in order to reach a set of vertices at distance at most 2 from $\{u,v\}$ of size $\Theta(n)$. The sparse case $d\leq n^{10/17}$, addressed in \cref{sc:sparse-full}, requires a more delicate switching argument and constitutes the most technical part of the paper. Here, in order to reach a set of size $\Theta(n)$, from fixed vertices $u,v$, we need $\mathrm{diam}(\mathbf{G}_n)$ rounds. Then, in contrast to the sparse case in \cref{thm:main_many_large_set}, we need a multidimensional analogue of the Erd\H{o}s--Littlewood--Offord theorem (\cref{cl:large-switchings-2}), since the degree profiles are considered with respect to $k$ sets of the partition. Nevertheless, the claim can still be established by applying a similar switching argument $\Theta(k)$ times.  


\paragraph{Organisation.} We start by presenting some preliminary results on properties of random graphs and concentration inequalities in \cref{sc:pre}. The rest of the paper is devoted to the proof of \cref{thm:main_many_large_set,thm:main_complete_refinement} which immediately imply our main \cref{thm:main}. \cref{thm:main_many_large_set} is proved across \cref{sc:dense-many-large,sc:sparse-many-large} where the dense and sparse cases are treated respectively. \cref{sc:dense-full,sc:sparse-full} are devoted to the dense and sparse cases of \cref{thm:main_complete_refinement}. Finally, in \cref{sc:expansion-lemma-proof} we present a proof of the expansion \cref{lm:common_expansion_Thm3} that asserts that whp for every set, its size remains concentrated after multiple expansion rounds. This lemma may be of independent interest and useful in other contexts.

\paragraph{Notation.} For a graph $G$, a set of vertices $U\subseteq V(G)$, and a non-negative integer $r$, we denote by $S_r(U)$ the sphere of radius $r$ around $U$ in the graph metric, omitting the dependency on $G$ since the underlying graph is always clear from the context. That is, $S_r(U)$ consists of vertices $v$ such that the length of a shortest path from $v$ to $U$ equals $r$. In particular, $S_0(U)=U$. We also denote $B_r(U)=\cup_{0\leq i\leq r}S_i(r)$ the ball of radius $r$ around $U$. We sometimes denote $S_1(U)$ by $N(U)$ and refer to it as the neighbourhood of $U$. For a set of vertices $X$ and a vertex $x\notin X$, we denote by $N_X(x)$ the number of neighbours of $x$ in $X$. We also use the standard notation $G[U]$ for the subgraph of $G$ induced by a set $U\subseteq V(G)$, and $G[U\times V]$ for the bipartite subgraph with (disjoint) parts $U$ and $V$, consisting of all edges of $G$ with one endpoint in $U$ and the other in $V$.

For a given degree sequence $\mathbf{d}=(d_1,\ldots,d_n)$, we will use $g(\mathbf{d})$ to denote the number of graphs on the vertex set $[n]$ with the degree sequence $\mathbf{d}$.

We often write $A\sqcup B$ to denote the union of two {\it disjoint} sets $A$ and $B$.

Finally, for a random variable $X$ with distribution $Q$, we write $X\sim Q$. In particular, $X\sim\mathrm{Bin}(n,p)$ is a binomial random variable with $n$ trials and success probability $p$.

\section{Preliminaries} 
\label{sc:pre}
In this section, we collect some probabilistic tools as well as properties of random regular graphs that we will use in the main proofs to follow.
\subsection{Concentration Inequalities} 

We will use the following specific instances of Chernoff's bound --- see, for example,~\cite[Theorem~2.1]{Janson}. Let $X \sim \textrm{Bin}(n,p)$. 
 Then, 
  for any $t \ge 0$ we have
	\begin{eqnarray}
		\mathbb{P} ( X \ge \mathbb{E} X + t ) &\le& \exp \left( - \frac {t^2}{2 (\mathbb{E} X + t/3)} \right)  \label{eq:chern_lb} \\
		\mathbb{P} ( X \le \mathbb{E} X - t ) &\le& \exp \left( - \frac {t^2}{2 \mathbb{E} X} \right).\label{eq:chern_ub}
	\end{eqnarray}
\begin{remark}
\label{rk:hypergeometric}
The same bounds hold for a random variable with the hypergeometric distribution with parameters $N$, $n$, and $m$ (see, for example,~\cite[Theorem 2.10]{Janson}). 
\end{remark}

%

\subsection{Properties of Binomials} 

Now, let us start with a few auxiliary observations. 

\begin{lemma}\label{lem:aux}
For all positive integers $b_1 < a_1$, $b_2 < a_2$, 
$$
\frac{a^a}{b^b(a-b)^{a-b}}\geq\frac{a_1^{a_1}a_2^{a_2}}{b_1^{b_1}(a_1-b_1)^{a_1-b_1}b_2^{b_2}(a_2-b_2)^{a_2-b_2}},
$$
where $a=a_1+a_2$ and $b=b_1+b_2$.
\end{lemma}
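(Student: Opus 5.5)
Taking logarithms, the inequality is equivalent to $f(a_1,b_1)+f(a_2,b_2)\le f(a_1+a_2,b_1+b_2)$, where
$$
f(x,y)=x\ln x-y\ln y-(x-y)\ln(x-y),\qquad 0\le y\le x
$$
(with the convention $0\ln 0=0$). So the plan is to show that $f$ is superadditive on the cone $\{0\le y\le x\}$.

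The key observation is that $f$ is positively homogeneous of degree one: $f(\lambda x,\lambda y)=\lambda f(x,y)$ for $\lambda>0$. Indeed, the terms $\ln\lambda$ contribute $\lambda(x-y-(x-y))\ln\lambda=0$. We also note that $f$ is concave, because each of the functions $y\mapsto -y\ln y$, $(x,y)\mapsto -(x-y)\ln(x-y)$ is concave, and $x\ln x$ combines with them to give the concave perspective function
$$
f(x,y)=x\,H(y/x),\qquad H(p)=-p\ln p-(1-p)\ln(1-p),
$$
where $H$ is the binary entropy function. To check the identity, expand $x H(y/x)=-y\ln(y/x)-(x-y)\ln((x-y)/x)=x\ln x-y\ln y-(x-y)\ln(x-y)$. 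The perspective of a concave function is concave, and for a homogeneous concave function superadditivity follows immediately:
$$
f(a_1+a_2,b_1+b_2)=2f\Bigl(\tfrac{(a_1,b_1)+(a_2,b_2)}{2}\Bigr)\ge f(a_1,b_1)+f(a_2,b_2).
$$

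Alternatively, and more concretely, I would write the argument directly via concavity of $H$. Set $\lambda=a_1/a$ and $p_i=b_i/a_i$, so that $b/a=\lambda p_1+(1-\lambda)p_2$. Then
$$
aH(b/a)\ge a\bigl(\lambda H(p_1)+(1-\lambda)H(p_2)\bigr)=a_1H(p_1)+a_2H(p_2),
$$
which is exactly the required inequality after exponentiating. The hypotheses $b_i<a_i$ with all quantities positive guarantee that every term is well defined with no zero in a logarithm. The only real step is recognising the entropy/perspective form; concavity of $H$ is standard, since $H''(p)=-1/(p(1-p))<0$. There is no serious obstacle.

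Combinatorially, the lemma is the entropy version of the Vandermonde-type inequality $\binom{a_1}{b_1}\binom{a_2}{b_2}\le\binom{a}{b}$, which may be how it is used later in the paper.
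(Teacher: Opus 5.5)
Your proof is correct, but it follows a different route from the paper's. The paper never brings in the entropy function. Instead, it applies Vandermonde's inequality to scaled parameters, $\binom{Na}{Nb}\ge\binom{Na_1}{Nb_1}\binom{Na_2}{Nb_2}$ for every $N\in\mathbb{N}$, and takes $N$-th roots. It then lets $N\to\infty$, using Stirling's formula to see that $\binom{Na}{Nb}^{1/N}\to a^a/(b^b(a-b)^{a-b})$, and similarly for the two factors on the right. This is a tensor-power trick that extracts the exponential growth rate from the combinatorial inequality.

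You instead prove the logarithmic form directly as Jensen's inequality for the concave binary entropy $H$. You rewrite $x\ln x-y\ln y-(x-y)\ln(x-y)=xH(y/x)$ and observe that $b/a$ is the $(a_1/a)$-weighted average of $b_1/a_1$ and $b_2/a_2$. Your argument is shorter and needs no asymptotics or error terms. It also works verbatim for arbitrary positive reals, even allowing $b_i\in\{0,a_i\}$ with $0\ln 0=0$, whereas the paper's argument needs integer parameters so that the scaled binomials make sense. In exchange, the paper's proof makes explicit that the lemma is the exponential-scale form of $\binom{a_1}{b_1}\binom{a_2}{b_2}\le\binom{a}{b}$. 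That is exactly the form in which it is then combined with explicit Stirling bounds in \cref{cor:binomial_product}.
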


\begin{proof}
Let us fix non-negative integers $b_1 < a_1$, $b_2 < a_2$. Let $a=a_1+a_2$ and $b=b_1+b_2$. We start with the following inequality, which follows immediately from the Vandermonde's identity: for any $n \in \mathbb{N}$,
\begin{equation}\label{eq:Vandermonde}
   \binom{na}{nb}^{1/n} = \left( \sum_{k=0}^{nb} \binom{na_1}{k} \binom{na_2}{nb-k} \right)^{1/n} \ge \binom{na_1}{nb_1}^{1/n} \binom{na_2}{nb_2}^{1/n}.
\end{equation}
Using Stirling's formula ($s! = (1+o(1)) \sqrt{2\pi s} (s/e)^s$), the left hand side of \eqref{eq:Vandermonde} can be estimated as follows:
\begin{eqnarray*}
   \binom{na}{nb}^{1/n} &=& \biggl( \frac { (na)! }{ (nb)! \cdot (n(a-b))! } \biggr)^{1/n} \\
   &=& \biggl( (1+o(1)) \frac { \sqrt{2\pi na} (na/e)^{na} }{ \sqrt{2\pi nb} (nb/e)^{nb} \cdot \sqrt{2\pi n(a-b)} (n(a-b)/e)^{n(a-b)} } \biggr)^{1/n} \\
   &=& \frac {a^a}{b^b (a-b)^{a-b} } \left( (1+o(1)) \frac {a}{2 \pi n b (a-b)} \right)^{1/2n} \\
   &=& \frac {a^a}{b^b (a-b)^{a-b} } \exp \left( - \Theta \left( \frac {\log n}{n} \right) \right) \\
   &\to& \frac {a^a}{b^b (a-b)^{a-b} },
\end{eqnarray*}
as $n \to \infty$. The right hand side of \eqref{eq:Vandermonde} can be dealt with the same way:
$$
\binom{na_1}{nb_1}^{1/n} \binom{na_2}{nb_2}^{1/n} \to \frac {a_1^{a_1}}{b_1^{b_1} (a_1-b_1)^{a_1-b_1} } \cdot \frac {a_2^{a_2}}{b_2^{b_2} (a_2-b_2)^{a_2-b_2} },
$$
as $n \to \infty$. This verifies the desired inequality.
\end{proof}

The preceding lemma has a useful corollary, which we record as follows.



\begin{corollary}[Anti-concentration of hypergeometric distribution]
\label{cor:binomial_product}
For integers $0 < b_1 < a_1$, $0 < b_2 < a_2$, 
\begin{equation}\label{eq:hyp0}
\binom{a_1}{b_1}\binom{a_2}{b_2} \le \binom{a}{b},
\end{equation}
and
\begin{equation}\label{eq:hyp1}
\binom{a_1}{b_1}\binom{a_2}{b_2}\leq\frac{2}{3}\sqrt{\frac{b(a-b) a_1a_2}{a b_1(a_1-b_1)b_2(a_2-b_2)}}\binom{a}{b}, 
\end{equation}
where $a=a_1+a_2$ and $b=b_1+b_2$. In particular, for all integers $0 < b < a$ and positive integers $k$,
\begin{equation}\label{eq:hyp2}
 \binom{a}{b}^k \le \left( \frac {a}{b(a-b)} \right)^{(k-1)/2} \binom{ka}{kb}.
\end{equation}
Moreover, for all integers $0 < b < a$,
\begin{equation}\label{eq:hyp3}
 \binom{2a}{2b} \le 4 \sqrt{ \frac {b(a-b)}{a} } \binom{a}{b}^2 \le 2 \sqrt{a} \binom{a}{b}^2.
\end{equation}
\end{corollary}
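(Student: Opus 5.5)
Throughout I write $\mathrm{ent}(x,y) := x^x/(y^y(x-y)^{x-y})$ for the entropy-type quantity appearing in \cref{lem:aux}. The plan is to derive all four inequalities from Stirling-type bounds, \cref{lem:aux}, and Vandermonde's identity.

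For \eqref{eq:hyp0} no Stirling estimate is needed. By Vandermonde, $\binom{a}{b}=\sum_k\binom{a_1}{k}\binom{a_2}{b-k}$, and the term $k=b_1$ is one of the summands. This is exactly the argument of \eqref{eq:Vandermonde} with $n=1$.

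For \eqref{eq:hyp1} I will use the two-sided Stirling bounds (Robbins' form)
\[
\sqrt{\tfrac{x}{2\pi y(x-y)}}\,\mathrm{ent}(x,y)\,e^{-1/(12y)-1/(12(x-y))}\le\binom{x}{y}\le\sqrt{\tfrac{x}{2\pi y(x-y)}}\,\mathrm{ent}(x,y),
\]
valid for $0<y<x$.
\begin{itemize}
\item Bound each factor on the left of \eqref{eq:hyp1} from above: this gives $\frac{1}{2\pi}\sqrt{\frac{a_1a_2}{b_1(a_1-b_1)b_2(a_2-b_2)}}\,\mathrm{ent}(a_1,b_1)\,\mathrm{ent}(a_2,b_2)$.
\item Apply \cref{lem:aux} to replace the product of entropy terms by $\mathrm{ent}(a,b)$.
\item Bound $\binom{a}{b}$ from below by Stirling, so that $\mathrm{ent}(a,b)\le\sqrt{2\pi b(a-b)/a}\,e^{1/(12b)+1/(12(a-b))}\binom{a}{b}$.
\item Combine: the constant in front is $\frac{1}{\sqrt{2\pi}}e^{1/(12b)+1/(12(a-b))}$. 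Since $b\ge2$ and $a-b\ge2$, the exponential is at most $e^{1/12}$, and $e^{1/12}/\sqrt{2\pi}\approx0.434<2/3$, as required.
\end{itemize}
This constant check is the only delicate step. If the margin looked tight, I would fall back on the sharper pair of Stirling bounds to make sure the $2/3$ survives.

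For \eqref{eq:hyp2} I will induct on $k$. The case $k=1$ is trivial. For the inductive step, apply \eqref{eq:hyp1} with $(a_1,b_1)=((k-1)a,(k-1)b)$ and $(a_2,b_2)=(a,b)$. The square-root factor then simplifies to
\[
\sqrt{\frac{kb\cdot k(a-b)\cdot(k-1)a\cdot a}{ka\cdot(k-1)b(k-1)(a-b)\cdot b(a-b)}}=\sqrt{\frac{k}{k-1}}\sqrt{\frac{a}{b(a-b)}}.
\]
Since $\frac{2}{3}\sqrt{k/(k-1)}\le\frac23\sqrt2<1$, each inductive step costs at most one factor of $\sqrt{a/(b(a-b))}$. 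This yields the exponent $(k-1)/2$ (the step is applied $k-1$ times).

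For \eqref{eq:hyp3} I will again use Stirling in both directions:
\begin{itemize}
\item upper bound: $\binom{2a}{2b}\le\sqrt{\frac{2a}{2\pi\cdot 2b\cdot 2(a-b)}}\,\mathrm{ent}(a,b)^2$;
\item lower bound: $\binom{a}{b}^2\ge\frac{a}{2\pi b(a-b)}\,\mathrm{ent}(a,b)^2\,e^{-1/6-1/6}$, using $b,a-b\ge1$.
\end{itemize}
Dividing, $\binom{2a}{2b}\big/\binom{a}{b}^2\le\sqrt{\pi}\,e^{1/3}\sqrt{b(a-b)/a}\approx2.47\sqrt{b(a-b)/a}$, which is at most $4\sqrt{b(a-b)/a}$. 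The final inequality in \eqref{eq:hyp3} follows from $b(a-b)\le a^2/4$, so that $4\sqrt{b(a-b)/a}\le 4\sqrt{a/4}=2\sqrt a$.
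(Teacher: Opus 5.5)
Your proof is correct and follows the paper's route: Vandermonde for \eqref{eq:hyp0}, Stirling bounds combined with \cref{lem:aux} for \eqref{eq:hyp1}, iterating \eqref{eq:hyp1} with $((k-1)a,(k-1)b)$ and $(a,b)$ for \eqref{eq:hyp2}, and a direct two-sided Stirling comparison for \eqref{eq:hyp3}. The only difference is that you apply the sharper Robbins-type bounds to the binomial coefficient itself rather than to each factorial, which yields slightly better constants: $e^{1/12}/\sqrt{2\pi}$ and $e^{1/3}\sqrt{\pi}$, compared with the paper's $e^{1/3}/\sqrt{2\pi}$ and $e^{5/12}\sqrt{\pi}$.
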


\begin{proof}
Let us fix positive integers $b_1 < a_1$, $b_2 < a_2$. Let $a=a_1+a_2$ and $b=b_1+b_2$.
The first inequality is a trivial consequence of Vandermonde's inequality.
We will use the following variant of the Stirling's formula that holds for all positive integers $s$:
$$
    \sqrt{2 \pi s} \left( \frac {s}{e} \right)^s < \sqrt{2 \pi s} \left( \frac {s}{e} \right)^s \exp \left( \frac {1}{12s+1} \right) < s! < \sqrt{2 \pi s} \left( \frac {s}{e} \right)^s \exp \left( \frac {1}{12s} \right) \le e^{1/12} \sqrt{2 \pi s} \left( \frac {s}{e} \right)^s.
$$
Using these inequalities and \cref{lem:aux}, we get
\begin{eqnarray*}
\binom{a_1}{b_1} \binom{a_2}{b_2} &\le& \frac {e^{1/12} \sqrt{2 \pi a_1} (a_1/e)^{a_1}} { \sqrt{2 \pi b_1} (b_1/e)^{b_1} \sqrt{2 \pi (a_1-b_1)} ((a_1-b_1)/e)^{a_1-b_1} } \\
&& \qquad \cdot \ \frac {e^{1/12} \sqrt{2 \pi a_2} (a_2/e)^{a_2}} { \sqrt{2 \pi b_2} (b_2/e)^{b_2} \sqrt{2 \pi (a_2-b_2)} ((a_2-b_2)/e)^{a_2-b_2} } \\
&=& \frac {e^{1/6}}{2\pi} \cdot \sqrt{ \frac {a_1 a_2}{b_1(a_1-b_1)b_2(a_2-b_2)}} \cdot \frac{a_1^{a_1}a_2^{a_2}}{b_1^{b_1}(a_1-b_1)^{a_1-b_1}b_2^{b_2}(a_2-b_2)^{a_2-b_2}} \\
&\le& \frac {e^{1/6}}{2\pi} \cdot \sqrt{ \frac {a_1 a_2}{b_1(a_1-b_1)b_2(a_2-b_2)}} \cdot \frac{a^a}{b^b(a-b)^{a-b}} \\
&=& \frac {e^{1/3}}{\sqrt{2 \pi}} \cdot \sqrt{ \frac {b(a-b)a_1 a_2}{ab_1(a_1-b_1)b_2(a_2-b_2)}} \cdot \frac{\sqrt{2\pi a} (a/e)^a}{e^{1/6} \sqrt{2\pi b}(b/e)^b \sqrt{2\pi (a-b)} ((a-b)/e)^{a-b}} \\
&\le& \frac{2}{3}\sqrt{\frac{b(a-b) a_1a_2}{a b_1(a_1-b_1)b_2(a_2-b_2)}}\binom{a}{b},
\end{eqnarray*}
since $e^{1/3}/\sqrt{2 \pi} \approx 0.557 < 2/3$, proving~\eqref{eq:hyp1}. 

To see that~\eqref{eq:hyp2} follows, note that for any non-negative integers $a<b$ and positive integer $i$,
$$
\binom{a}{b} \binom{ia}{ib} \le \frac {2}{3} \left( \frac {(i+1)a}{bi(a-b)} \right)^{1/2} \binom{(i+1)a}{(i+1)b} \le \left( \frac {a}{b(a-b)} \right)^{1/2} \binom{(i+1)a}{(i+1)b},
$$
since $\frac {2}{3} \sqrt{(i+1)/i} \le 2 \sqrt{2}/3 \approx 0.943 < 1$. Applying the above inequality $k-1$ times, we get the desired conclusion.

Finally, for~\eqref{eq:hyp3}, note that
\begin{eqnarray*}
\binom{2a}{2b} &\le& \frac {e^{1/12} \sqrt{2 \pi (2a)} (2a/e)^{2a}} { \sqrt{2 \pi (2b)} (2b/e)^{2b} \sqrt{2 \pi (2(a-b))} (2(a-b)/e)^{2(a-b)} } \\
&=& e^{5/12} \sqrt{\pi} \cdot \sqrt{ \frac {b(a-b)}{a}} \cdot \left( \frac { \sqrt{2 \pi a} (a/e)^{a}} { e^{1/6} \sqrt{2 \pi b} (b/e)^{b} \sqrt{2 \pi (a-b)} ((a-b)/e)^{a-b} } \right)^2 \\
&\le& 4 \sqrt{ \frac {b(a-b)}{a}} \cdot \binom{a}{b}^2 ~~\le~~ 2 \sqrt{ a } \cdot \binom{a}{b}^2,
\end{eqnarray*}
since $e^{5/12} \sqrt{\pi} \approx 2.689 < 4$, which completes the proof of the corollary.
\end{proof}

\subsection{Counting Graphs} 

Recall that, we denote the number of graphs on the vertex set $[n]$ with a given degree sequence $\mathbf{d}$ by $g(\mathbf{d})$. The following result gives tight asymptotic bounds on $g(\mathbf{d})$ for any degree sequence satisfying some mild condition. Dense graphs were investigated in~\cite{MW} but the result was generalised to sparser graphs in~\cite{LW}. The result below states the bound in a slightly different form, which is more convenient for our purposes. We provide a short proof of this reformulation.


\begin{theorem}[\cite{LW,MW,MW2}, rephrased]
\label{thm:newcount}
Let $\mathbf{d}=(d_1,\ldots,d_n)$ be any degree sequence such that $\sum_{i=1}^n d_i$ is
even and for all $i \in [n]$, $|d_i-d|\le d^{1/2+\varepsilon'}$
for some $\varepsilon'>0$, where $d$ is the average degree.
Let $m=\frac{1}{2}\sum_{i=1}^nd_i=\frac{dn}{2}$ be the number of edges, and
$\eta=\frac1n\sum_{i=1}^n (d_i-d)^2$.
Suppose that  $1\ll d \le (1-\varepsilon_0)n$ for some $\varepsilon_0>0$. 
Then,
$$
  g(\mathbf{d}) = \frac{\prod_{i=1}^n\binom{n-1}{d_i}}
  {m^{1/2} \binom{\binom{n}{2}}{m}} \exp\bigl( O(1)-\Theta(\eta^2/d^2)\bigr).
$$
\end{theorem}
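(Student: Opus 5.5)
This statement is a rephrasing of the known asymptotic enumeration from \cite{MW,LW,MW2}, so the plan is to start from the formula as stated there and reshape it. The standard form (McKay--Wormald for the dense range, Liebenau--Wormald for $1\ll d\le(1-\varepsilon_0)n$) reads
\[
g(\mathbf{d}) \sim \sqrt{2}\, e^{1/4}\,(\lambda^{\lambda}(1-\lambda)^{1-\lambda})^{\binom{n}{2}} \prod_{i=1}^n \binom{n-1}{d_i}\cdot \exp\Bigl(-\frac{\eta^2}{4\lambda^2(1-\lambda)^2 n^2}\Bigr),
\]
with $\lambda=d/(n-1)$. Here $\eta$ is the variance $\frac1n\sum(d_i-d)^2$, possibly up to the harmless normalisation $n$ versus $n-1$. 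The hypothesis $|d_i-d|\le d^{1/2+\varepsilon'}$ is exactly the condition under which these results apply. The first step is simply to quote this formula, choosing $\varepsilon'$ small enough as the cited results require.

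The second step is to convert the factor $(\lambda^\lambda(1-\lambda)^{1-\lambda})^{\binom n2}$ into $1/\bigl(m^{1/2}\binom{\binom n2}{m}\bigr)$. Put $N=\binom n2$ and note $m=\lambda N$. By Stirling's formula,
\[
\binom{N}{m}=\Theta\Bigl(\frac{1}{\sqrt{N\lambda(1-\lambda)}}\Bigr)\bigl(\lambda^{\lambda}(1-\lambda)^{1-\lambda}\bigr)^{-N}.
\]
Since $\varepsilon_0\le 1-\lambda\le1$, we have $N\lambda(1-\lambda)=\Theta(m)$, and therefore
\[
(\lambda^\lambda(1-\lambda)^{1-\lambda})^{N}=\Theta\bigl(m^{-1/2}\bigr)\binom{N}{m}^{-1}.
\]
Because $\lambda(n-1)=d$ is exact, no rounding issue arises. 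The constants $\sqrt2 e^{1/4}$, together with the $\Theta(1)$ from Stirling, are absorbed into $\exp(O(1))$.

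The third step handles the correction term. Since $\lambda=\Theta(d/n)$ and $1-\lambda=\Theta(1)$, we get $4\lambda^2(1-\lambda)^2n^2=\Theta(d^2)$. Hence the exponent is $-\Theta(\eta^2/d^2)$, with implicit constants depending only on $\varepsilon_0$. Any $1+o(1)$ factor from the asymptotic becomes $\exp(o(1))\subseteq\exp(O(1))$.

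The only real obstacle is checking that the cited formula covers the full range uniformly, in particular across the interface between the sparse theorem of \cite{LW} and the dense theorem of \cite{MW}. One also has to confirm that the error exponent in those results is $\exp(o(1))$ under our deviation bound. If the sources state the deviation term in a slightly different form, for instance $-\frac14-\frac{\gamma_2^2}{4\lambda^2(1-\lambda)^2}$ with $\gamma_2=\frac{1}{(n-1)^2}\sum(d_i-d)^2$, I would verify that $\gamma_2=\Theta(\eta/n)$. That gives the same $\Theta(\eta^2/d^2)$ after the constant $-1/4$ is absorbed.
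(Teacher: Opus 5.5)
Your proposal is correct and follows the paper's proof essentially step for step. The paper likewise quotes the McKay--Wormald/Liebenau--Wormald formula with $\lambda=d/(n-1)$ and $N=\binom n2$, uses Stirling to replace $(\lambda^\lambda(1-\lambda)^{1-\lambda})^N$ by $\Theta(m^{-1/2})\binom{N}{m}^{-1}$ via $\lambda(1-\lambda)N=\Theta(m)$, and uses $\lambda(1-\lambda)n=\Theta(d)$ to turn the correction term into $-\Theta(\eta^2/d^2)$.
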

\begin{proof}
  Define $\lambda=d/(n-1)$, $N=\binom n2$, and note that $m=\lambda N$.
  From the overlapping theorems of \cite{LW,MW,MW2}, 
  we know that
   \begin{equation}
  \label{eq:gprecise}
      g(\mathbf{d}) \sim \sqrt 2
        \exp\biggl(\frac14-\frac{\eta^2}{4\lambda^2(1-\lambda)^2 n^2}\biggr)
         \bigl(\lambda^\lambda(1-\lambda)^{1-\lambda}\bigr)^N
         \prod_{i=1}^n\binom{n-1}{d_i}.
  \end{equation}
  By Stirling's formula,
  $$
     \binom{N}{\lambda N} \sim 
     \bigl(\lambda^\lambda(1-\lambda)^{1-\lambda}\bigr)^{-N}
     \bigl(2\pi\lambda(1-\lambda)N\bigr)^{-1/2},
  $$
  provided $\lambda(1-\lambda)N\to\infty$.
  The theorem now follows from noting that
  $\lambda(1-\lambda) n=\Theta(d)$ and
  $\lambda(1-\lambda) N=\Theta(m)$ if $\lambda$ is bounded
  away from~1.
\end{proof}

We say that a sequence $\mathbf{d}=(d_1,\ldots,d_n)$ is \emph{balanced} if $|d_i - d_j| \le 1$ for any $1 \le i < j \le n$. The next observation is that $g(\mathbf{d})$ is maximized (over all sequences with a fixed even sum) when $\mathbf{d}$ is balanced. 

\begin{lemma}
\label{lm:even}
Fix $m\in\left[\binom{n}{2}\right]$. The number of graphs on $n$ vertices and $m$ edges with a specified degree sequence $\mathbf{d}=(d_1,\ldots,d_n)$ (in particular, $\sum d_i=2m$) is maximized when the degree sequence is as even as possible. In other words,
$$
\max \left\{ g(\mathbf{d}) : \sum d_i = 2m \right\} = g(\mathbf{ \hat{d} }), 
$$
where $\mathbf{\hat{d}}$ is the degree sequence, unique up to order, with only $\lfloor \sum d_i / n \rfloor$ and $\lceil \sum d_i / n \rceil$.
\end{lemma}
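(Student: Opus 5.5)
The plan is a local "balancing" argument: show that whenever a degree sequence $\mathbf{d}$ with $\sum d_i=2m$ is not balanced, there is another sequence $\mathbf{d}'$ with the same sum, strictly closer to balanced, and $g(\mathbf{d}')\ge g(\mathbf{d})$. Iterating this finitely many times (the quantity $\sum_i d_i^2$ strictly decreases at each step) reaches $\hat{\mathbf{d}}$, so $g(\hat{\mathbf{d}})$ is the maximum. Note also that $g$ is symmetric under permutations of the coordinates, so $\hat{\mathbf{d}}$ being unique up to order is harmless.

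\textbf{The balancing step.} Suppose $d_1\ge d_2+2$, and set $\mathbf{d}'=\mathbf{d}-e_1+e_2$. To compare $g(\mathbf{d})$ with $g(\mathbf{d}')$, I will build an injection, or more precisely a bipartite double count, between graphs realising $\mathbf{d}$ and graphs realising $\mathbf{d}'$. Take $G$ with degree sequence $\mathbf{d}$. Let $A=N(1)\setminus(N(2)\cup\{2\})$, the vertices adjacent to $1$ but not to $2$ (excluding $2$ itself). Since $1$ and $2$ may or may not be adjacent, and they share $N(1)\cap N(2)$, we get $|A|\ge d_1-d_2-1\ge 1$. For any $w\in A$, replacing the edge $1w$ by the edge $2w$ produces a graph with degree sequence $\mathbf{d}'$. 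Symmetrically, for $G'$ realising $\mathbf{d}'$, let $A'=N'(2)\setminus(N'(1)\cup\{1\})$. Each $G'$ arises from exactly $|A'|$ pairs $(G,w)$.

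Double counting the pairs $(G,w)$ then gives
\[
\sum_{G} |A(G)| = \sum_{G'} |A'(G')|.
\]

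\textbf{The main obstacle} is bounding $|A|$ from below and $|A'|$ from above by quantities that depend only on the degrees, not on the common neighbourhood. The key observation is that the common neighbourhood $N(1)\cap N(2)$ and the adjacency status of $12$ are unchanged by the switch. So I will stratify both counts by $c=|N(1)\cap N(2)\setminus\{1,2\}|$ and by the indicator $\epsilon=\mathbf 1[12\in E]$. Within a fixed stratum:
\[
|A|=d_1-\epsilon-c, \qquad |A'|=(d_2+1)-\epsilon-c.
\]
Since $d_1\ge d_2+2>d_2+1$, we have $|A|>|A'|$, and $|A'|\ge 1$ whenever graphs of that stratum exist on the $\mathbf{d}'$ side.

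Since the switch preserves the stratum, the double count holds stratum by stratum. Writing $g_{c,\epsilon}$ and $g'_{c,\epsilon}$ for the stratum counts, this gives
\[
g_{c,\epsilon}(d_1-\epsilon-c)=g'_{c,\epsilon}(d_2+1-\epsilon-c).
\]
Hence $g'_{c,\epsilon}\ge g_{c,\epsilon}$. Summing over $c$ and $\epsilon$ yields $g(\mathbf{d}')\ge g(\mathbf{d})$. One edge case needs checking: strata where $d_1-\epsilon-c=0$. These cannot occur, since the stratum requires $d_1\ge c+\epsilon$, and equality would force $d_2\ge c+\epsilon=d_1$, a contradiction. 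So every stratum of $\mathbf{d}$ has positive $|A|$, and the identity is well defined.

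Finally, I conclude by iterating the step. Each move strictly decreases $\sum d_i^2$ while preserving the sum, so the process terminates. It can only stop at a sequence with all entries differing by at most $1$, which is $\hat{\mathbf{d}}$ up to order.
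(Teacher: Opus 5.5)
Your proposal is correct and is essentially the paper's argument: the same switching that moves one edge from the higher-degree vertex to the lower-degree one, double-counted within classes of fixed common-neighbourhood size. Your additional stratification by whether $12\in E$ is slightly more careful than the paper's counts $d_2-x$ and $d_1+1-x$, which tacitly ignore the case $1\sim 2$.

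One harmless slip: some $\mathbf{d}'$-graphs can have $|A'|=0$, so your remark that $|A'|\ge 1$ in every nonempty $\mathbf{d}'$-stratum is false. In such a stratum the identity gives $g_{c,\epsilon}(d_1-\epsilon-c)=0$, hence $g_{c,\epsilon}=0$, so $g'_{c,\epsilon}\ge g_{c,\epsilon}$ still holds and your conclusion is unaffected.
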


\begin{proof}
Let us fix $m\in\left[\binom{n}{2}\right]$. For a contradiction, suppose that $g(\mathbf{d})$, the number of graphs on $n$ vertices with the degree sequence $\mathbf{d}=(d_1,\ldots,d_n)$ satisfying $\sum d_i = 2m$, is maximized for a non-balanced sequence $\mathbf{d}$. Without loss of generality, suppose that $d_1\leq d_2-2$. 

Let $\mathbf{d}'=(d'_1, \ldots, d'_n) = (d_1+1,d_2-1,d_3,\ldots,d_n)$ with $\sum d'_i = \sum d_i = 2m$. Consider the following switching: in a graph with the degree sequence $\mathbf{d}$, choose a vertex $v$ such that $1\nsim v$ and $2\sim v$, remove edge $\{2,v\}$ and add $\{1,v\}$ instead (``switch the edges $\{2,v\}$ and $\{1,v\}$'') to get a graph with the degree sequence $\mathbf{d}'$. Letting $g_x(\mathbf{d})$ be the number of graphs where vertices $1$ and $2$ have exactly $x$ common neighbours, we get
$$
g_x(\mathbf{d}) (d_2-x) = g_x(\mathbf{d}') (d_1 + 1 - x) <  g_x(\mathbf{d}') (d_2-x),
$$
since $d_1\leq d_2-2$. Indeed, there are $d_2-x$ graphs with the degree sequence $\mathbf{d'}$ that can be obtained from a given graph with the degree sequence $\mathbf{d}$ and each graph with the degree sequence $\mathbf{d'}$ can be obtained from $d_1 + 1 - x$ graphs with the degree sequence $\mathbf{d}$. It follows that $g(\mathbf{d}')=\sum_xg_x(\mathbf{d}')>\sum_xg_x(\mathbf{d})=g(\mathbf{d})$, giving us the desired contradiction.
\end{proof}

We also recall the probability bound on the event that a uniformly random graph with a given degree sequence contains a specified set of edges.

\begin{claim}[\cite{McKay-inclusion}]
Let $\mathbf{G}_n$ be a uniformly random graph on the vertex set $[n]$ with a fixed degree sequence $(d_1,\ldots,d_n)$. Let $H$ be a graph on the vertex set $[n]$ with degree sequence $(d'_1, \ldots, d'_n)$ such that $d'_i \le d_i$ for all $i \in [n]$. Let $m=\frac{1}{2}\sum d_i$ and $m'=\frac{1}{2}\sum d'_i$ be the number of edges in $\mathbf{G}_n$ and $H$, respectively. Let $d=\max\{d_1,\ldots,d_n\}=o(m^{1/2})$ and let $m'\leq m/2$. Then,
$$
 \mathbb{P}(H\subseteq \mathbf{G}_n)\leq\frac{\prod_{i=1}^n d_i(d_i-1)\ldots(d_i-d'_i+1)}
{2^{m'}(m-2d^2)(m-2d^2-1)\ldots(m-2d^2-m'+1)}.
$$
\label{cl:H-inclusion}
\end{claim}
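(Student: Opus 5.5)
We prove this by a switching argument that compares graphs containing $H$ with graphs not containing $H$. An equivalent route goes through the configuration (pairing) model. Our plan is to add the edges of $H$ one at a time and bound the conditional probabilities. Order the edges of $H$ as $e_1,\dots,e_{m'}$, with $e_j=\{x_j,y_j\}$, and write $H_j=\{e_1,\dots,e_j\}$. Then
$$\mathbb{P}(H\subseteq\mathbf{G}_n)=\prod_{j=1}^{m'}\mathbb{P}(e_j\in\mathbf{G}_n\mid H_{j-1}\subseteq\mathbf{G}_n).$$
It suffices to show that each factor is at most
$$\frac{r_j(x_j)\,r_j(y_j)}{2(m-2d^2-j+1)},$$
where $r_j(v)=d_v-\deg_{H_{j-1}}(v)$ is the number of neighbours of $v$ still ``free'' after the edges of $H_{j-1}$ are forced. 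Multiplying these bounds over $j$, the numerators telescope at each vertex $i$ into $d_i(d_i-1)\cdots(d_i-d'_i+1)$. This gives exactly the claimed bound.

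To bound a single factor, fix $j$. Let $\mathcal{A}$ be the set of graphs with the given degrees that contain $H_{j-1}$ and $e_j$, and let $\mathcal{B}$ be the set of those that contain $H_{j-1}$ but not $e_j$. We use the standard two-edge switching. For $G\in\mathcal{A}$, pick an ordered edge $(a,b)$ of $G$ that is not in $H_{j-1}$ and is vertex-disjoint from $\{x_j,y_j\}$, with $x_ja,\,y_jb\notin E(G)$. Replace $x_jy_j$ and $ab$ by $x_ja$ and $y_jb$. The result lies in $\mathcal{B}$ and still contains $H_{j-1}$, because the removed edges are not in $H_{j-1}$. The forward count is at least $2(m-(j-1)) - (\text{bad choices})$. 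Bad choices are those where $a$ or $b$ is adjacent to $x_j$ or $y_j$, or $ab$ touches $x_j$ or $y_j$; there are at most about $4d^2$ ordered such edges, by summing degrees over neighbours. The backward count, for $G'\in\mathcal{B}$, is at most $r_j(x_j)\,r_j(y_j)$. Indeed, a reverse switching is determined by a neighbour $a$ of $x_j$ and a neighbour $b$ of $y_j$, where $x_ja$ and $y_jb$ are non-$H_{j-1}$ edges. This yields
$$\frac{|\mathcal{A}|}{|\mathcal{B}|}\le\frac{r_j(x_j)\,r_j(y_j)}{2(m-j+1)-4d^2},$$
and hence
$$\mathbb{P}(e_j\mid H_{j-1})\le\frac{|\mathcal{A}|}{|\mathcal{B}|}\le\frac{r_j(x_j)\,r_j(y_j)}{2(m-2d^2-j+1)}.$$

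The main obstacle is getting the bookkeeping constant exactly $2d^2$ per step, uniformly in $j$. The bad choices involving the already-forced edges of $H_{j-1}$ are handled by subtracting $j-1$ from $m$ before doubling. The adjacency exclusions must then be bounded by $4d^2$ ordered choices, using $d=\max_i d_i$. The hypotheses $m'\le m/2$ and $d=o(m^{1/2})$ guarantee that $m-2d^2-m'+1>0$, so all denominators are positive and $\mathcal{B}$ is nonempty whenever $\mathcal{A}$ is. If the direct count gives a slightly larger constant, we would fall back on the configuration model: there, the conditional pairing probability is exactly $r(x)r(y)/(2m-2(j-1)-1)$, and a switching comparison transfers this bound to simple graphs at the cost of the $2d^2$ correction.
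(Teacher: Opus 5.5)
Your proof is correct and complete. The excluded ordered pairs number at most $4d^2$ exactly: each of $a\sim x_j$, $a\sim y_j$, $b\sim x_j$, $b\sim y_j$ rules out at most $d^2$ pairs, and the cases where $ab$ meets $\{x_j,y_j\}$ are already among these since $x_jy_j\in E(G)$. The reverse count is at most $r_j(x_j)r_j(y_j)$ because the new edges $x_ja,y_jb$ were non-edges of $G\supseteq H_{j-1}$, and the telescoping reproduces the stated bound, so the configuration-model fallback is never needed.

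The paper gives no proof of this claim and simply cites McKay. As far as I can tell, your edge-by-edge switching comparison is essentially the standard argument behind that bound.
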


We note that \cref{cl:H-inclusion} 
 immediately implies the following.


\begin{claim}
\label{cl:regular-probability-bound}
Under the assumptions of \cref{cl:H-inclusion}, for all $n$ large enough,
$$
 \mathbb{P}(H\subseteq\mathbf{G}_n)\leq\left(\frac{2d}{n}\right)^{|E(H)|}.
$$
\end{claim}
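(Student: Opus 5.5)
The plan is to start from the explicit bound of \cref{cl:H-inclusion} and estimate its numerator and denominator separately. The key point is to keep the factor $1/(2m)$ per edge exactly, and to absorb every correction into the slack between $1$ and $2$. I work in the setting in which the claim is stated, namely $\mathbf{G}_n\sim\mathcal{G}_{n,d}$, so $d_i=d$ for all $i$ and $m=dn/2$. Write $m'=|E(H)|$; the assumptions give $m'\le m/2$ and $d^2=o(m)$, the latter being equivalent to $d=o(n)$.

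\emph{Numerator.} Each factor of $d_i(d_i-1)\cdots(d_i-d'_i+1)$ is at most $d$, and there are $\sum_i d'_i=2m'$ factors in total, so the numerator is at most $d^{2m'}$. I will not need the additional saving available from the falling factorials.

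\emph{Denominator.} It equals
\[
2^{m'}\prod_{j=0}^{m'-1}(m-2d^2-j)=(2m)^{m'}\prod_{j=0}^{m'-1}(1-x_j),\qquad x_j=\frac{2d^2+j}{m}.
\]
Since $j\le m'\le m/2$ and $2d^2/m=o(1)$, every $x_j\le 1/2+o(1)$. I will use the elementary inequality $\ln(1-x)\ge -x-x^2$, valid for $0\le x\le 0.6$ say; at $x=1/2$ it reads $-0.693\ge -0.75$. This gives
\[
\sum_{j<m'}\ln(1-x_j)\ \ge\ -\frac{2d^2m'}{m}-\frac{m'^2}{2m}-\sum_{j<m'}x_j^2\ \ge\ -o(m')-\frac{m'}{4}-\Bigl(\frac14+o(1)\Bigr)m' .
\]
For the middle term I use $m'\le m/2$. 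For the last term I use $x_j^2\le(1/2+o(1))^2$. Hence the product over $j$ is at least $e^{-(1/2+o(1))m'}$.

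Combining the two estimates, the bound of \cref{cl:H-inclusion} is at most
\[
\Bigl(\frac{d^2}{2m}\Bigr)^{m'}e^{(1/2+o(1))m'}=\Bigl(\frac dn\,e^{1/2+o(1)}\Bigr)^{m'}.
\]
Here I used $d^2/(2m)=d^2/(dn)=d/n$. Since $e^{1/2}\approx1.649<2$, for $n$ large enough the factor $e^{1/2+o(1)}$ is below $2$. This gives $\mathbb{P}(H\subseteq\mathbf{G}_n)\le(2d/n)^{m'}$ with no error term depending on $m'$, uniformly for all admissible $H$.

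The only delicate step is the denominator. A crude per-factor bound such as $m-2d^2-j\ge m/2-2d^2$ loses a factor $(1+O(d/n))$ per edge. Over $m'$ edges this accumulates to a factor that need not be bounded, so it does not reach the exact constant $2d/n$. The plan avoids this loss by controlling the product over $j$ through its logarithm, whose average $-\frac1{m'}\sum_j\ln(1-x_j)$ stays bounded by roughly $1/2$ (the exact value at $m'=m/2$ is $1-\ln 2\approx0.31$). Checking that the $o(1)$ terms, which come from $d=o(n)$ (equivalently $d^2=o(m)$), are eventually absorbed by the gap $\ln 2-1/2>0$ is routine.
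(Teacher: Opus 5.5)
Your argument is correct. It is the derivation the paper has in mind when it says the claim follows immediately from \cref{cl:H-inclusion}: bound the numerator by $d^{2|E(H)|}$, write the denominator as $(dn)^{|E(H)|}\prod_j(1-x_j)$, and control the product. Your logarithmic averaging of that product is what yields the constant exactly $2$, since the naive per-factor bound only gives $(2d/(n-8d))^{|E(H)|}$.

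Reading the claim in the $d$-regular setting is also the right choice. For non-regular sequences the stated bound fails: with a quarter of the vertices of degree $d$ and the rest of degree $1$, a single edge between two degree-$d$ vertices already has probability about $4d/n$.
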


\subsection{Sandwiching Graphs} 

Consider the binomial random graph $\mathcal{G}(n,p)$ which has vertex set $[n]$ and each potential edge is included independently at random with probability $p$; $p=p(n)$ could be, and usually is, a function of $n$ that tends to zero as $n \to \infty$. Since the independence of the edges allows the use of a wide variety of techniques, $\mathcal{G}(n,p)$ is typically much easier to study compared to $\mathcal{G}_{n,d}$. As a result, it is tempting to hope for a general purpose ``black box'' theorem that is able to translate results between $\mathcal{G}(n,p)$ and $\mathcal{G}_{n,d}$. In 2004, Kim and Vu~\cite{KV2004} formalized this desire in their famous ``sandwich conjecture''. After more than 20 years and a number of important contributions~\cite{DFRS2017,GIM2020,GIM2022,GIM2023,KRRS2023}, the conjecture was finally proved~\cite{BIM}.

\begin{theorem}[Theorem~1.1 \cite{BIM}]\label{thm:sandwich}
For each $\epsilon>0$ there is some $C>0$ such that the following holds for each $d \ge C \log n$. There is a coupling $(\mathbf{G}_*, \mathbf{G}, \mathbf{G}^*)$ of random graphs such that $\mathbf{G}_* \sim \mathcal{G}(n, (1-\epsilon)d/n)$, $\mathbf{G} \sim \mathcal{G}_{n,d}$, $\mathbf{G}^* \sim \mathcal{G}(n, (1+\epsilon)d/n)$, and whp $\mathbf{G}_* \subset \mathbf{G} \subset \mathbf{G}^*$.
\end{theorem}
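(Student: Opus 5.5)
We will not reprove this result here: in the paper it is imported from~\cite{BIM}, where it resolves the Kim--Vu sandwich conjecture. If one had to prove it, the plan would follow the ``edge-by-edge coupling'' strategy of~\cite{GIM2020,GIM2022,GIM2023}, which reduces the problem to uniform estimates on conditional edge probabilities.

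\emph{Reduction to conditional edge probabilities.} Fix a uniformly random ordering $e_1,\ldots,e_N$ of the $N=\binom n2$ pairs and independent uniform variables $U_1,\ldots,U_N$ on $[0,1]$. Put $e_j$ into $\mathbf{G}_*$ if $U_j\le(1-\epsilon)d/n$ and into $\mathbf{G}^*$ if $U_j\le(1+\epsilon)d/n$. Then put $e_j$ into $\mathbf{G}$ if and only if $U_j\le p_j$, where
\[
p_j=\mathbb{P}\bigl(e_j\in\mathbf{G}\bigm| \text{the decisions about } e_1,\ldots,e_{j-1}\bigr)
\]
is computed in $\mathcal{G}_{n,d}$. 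By construction $\mathbf{G}\sim\mathcal{G}_{n,d}$ exactly, and the two binomial graphs have the correct laws. The sandwich $\mathbf{G}_*\subset\mathbf{G}\subset\mathbf{G}^*$ holds as long as $(1-\epsilon)d/n\le p_j\le(1+\epsilon)d/n$ for every $j$. Near the end of the process this fails, because the residual degrees become small. To handle this, run the coupling only over the first $(1-\delta)N$ pairs, and treat the remaining pairs with a second, cruder coupling in which $\epsilon$ absorbs the slack.

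\emph{Estimating $p_j$.} Conditioning on the history amounts to fixing a set of forced edges $H$ and a set of forbidden pairs $F$. Then $p_j$ is a ratio of counts of graphs with a prescribed residual degree sequence that avoid $F\cup\{e_j\}$, with and without the edge $e_j$. We would establish three things.
\begin{enumerate}[(i)]
\item Whp along the process, the residual degrees $d_i-\deg_H(i)$ stay concentrated around $d(1-j/N)$, within $O(\sqrt{d\log n})$. This follows from Chernoff bounds, \eqref{eq:chern_lb} and \eqref{eq:chern_ub}, applied to the random ordering.
\item Whp the forbidden graph $F$ is itself pseudo-random: its codegrees and its edge counts between large sets are close to those of a binomial graph.
\item For such near-regular degree sequences with a pseudo-random forbidden graph, the ratio of counts is $(1+O(\epsilon))$ times the residual degree product divided by the number of residual non-forbidden pairs. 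In the dense range this comes from a version of \cref{thm:newcount} with forbidden edges, obtained by complex-analytic enumeration. In the sparse range, down to $d\ge C\log n$, it comes from switchings around $e_j$ that compare graphs containing $e_j$ with graphs avoiding it, in the spirit of the proof of \cref{lm:even}.
\end{enumerate}
Together, (i)--(iii) give $p_j=(1+O(\epsilon))\,d/n$ uniformly in $j\le(1-\delta)N$.

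\emph{Main obstacle.} The hardest step is (iii) in the sparse regime $d=\Theta(\log n)$. There the residual degrees fluctuate by a constant factor, so the switching counts must be controlled with relative error $\epsilon$ uniformly over all histories, with failure probabilities summable over all $N$ steps. The first approach would be to compare $\mathcal{G}_{n,d}$ with a random graph having a prescribed degree sequence in $\mathcal{G}(n,p)$, and to apply a multi-round switching argument. In that argument, the number of switchings is counted via codegree bounds guaranteed by (ii). Ensuring these bounds hold simultaneously throughout the process is exactly what forces the threshold $d\ge C\log n$.
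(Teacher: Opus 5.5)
Citing \cite{BIM} is exactly what the paper does; it gives no argument of its own, so as a justification your first sentence is fine. The outline you add, however, would not prove the statement as written. The step that fails is the treatment of the last $\delta N$ pairs.

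On the lower side the truncation is harmless. Leave $\mathbf{G}_*$ empty there: since the first $(1-\delta)N$ pairs form a uniformly random set, the resulting graph can be coupled to contain $\mathcal{G}(n,(1-2\delta)(1-\epsilon)d/n)$ whp. On the upper side it fails. The roughly $\delta dn/2$ edges of $\mathbf{G}$ among the last $\delta N$ pairs must all lie in $\mathbf{G}^*$. They form a random factor, with degrees about $\delta d$, of a random host graph of density $\delta$. Their relative density is again $d/n$, so the leftover problem is the original upper-sandwich problem, not a cruder one. Absorbing it into the slack (say, by putting all leftover pairs into $\mathbf{G}^*$) costs edge density $\delta$, and hence needs $\delta\lesssim\epsilon d/n$. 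But then the first phase must keep $p_j\le(1+\epsilon)d/n$ until the residual degrees of $\mathbf{G}$ are only about $\epsilon d^2/n$. Chernoff bounds as in your step (i) give relative error $\epsilon$ at that level only if $\epsilon d^2/n\gg\epsilon^{-2}\log n$, i.e.\ for $d\gg\sqrt{n\log n}$. For $d=\Theta(\log n)$ the residual degrees at that stage are mostly $0$ and occasionally $1$, and the $p_j$ degenerate: most vanish, while a pair joining two still-unsaturated vertices typically has $p_j$ close to $1$. Heuristically, the expected number of violations $U_j\in((1+\epsilon)d/n,\,p_j]$ is then comparable to the number of edges of $\mathbf{G}$ processed at that stage, which tends to infinity.

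What is missing is a way to end the exposure while residual degrees are still of order $d$. For instance, $\mathbf{G}\subset\mathcal{G}(n,(1+\epsilon)d/n)$ is equivalent to $\mathcal{G}(n,1-(1+\epsilon)d/n)\subset\overline{\mathbf{G}}$. You can attack the latter by revealing the edges of $\overline{\mathbf{G}}$ in uniformly random order and stopping when about $(1+\epsilon/2)dn/2$ pairs are still unrevealed. Then $\mathbf{G}$ keeps all its edges, every vertex still has about $(1+\epsilon/2)d\ge C\log n$ unrevealed pairs, and no end-game is needed. The price is a very tight budget: the coupling losses summed over roughly $N$ steps must stay below about $\epsilon dn/4$. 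So the conditional probability that a given unrevealed pair is an edge of $\mathbf{G}$ must exceed its average by at most a small relative error, uniformly along the whole process. Proving that for $d$ as small as $C\log n$ is exactly what your step (iii) leaves open. Even the repaired outline therefore only reduces the theorem to its hardest part, and deferring to \cite{BIM}, as the paper does, is the appropriate treatment.
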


\subsection{Expansion Properties} 

We will use the expansion properties of random $d$-regular graphs that follow from their eigenvalues. The adjacency matrix $A=A(G)$
of a given $d$-regular graph $G$ on $n$ vertices, is an $n \times n$ real symmetric matrix. Thus, the matrix $A$ has $n$ real eigenvalues which we denote by $d = \lambda_1 \ge \lambda_2 \ge \cdots \ge \lambda_n$. It is known that several structural properties of a $d$-regular graph are reflected in its spectrum. Since we focus on expansion properties, we are particularly interested in the following quantity: 
$$\lambda=\lambda(G):=\max\{|\lambda_2|,|\lambda_n|\}.$$

The number of edges $e(A,B)$ between two sets $A$ and $B$ in a random $d$-regular graph on $n$ vertices is expected to be close to $d|A||B|/n$. (Note that $A \cap B$ does not have to be empty; in general, $e(A,B)$ is defined to be the number of edges between $A \setminus B$ to $B$ plus twice the number of edges that contain only vertices of $A \cap B$.) A small $\lambda$ (that is, a large spectral gap) implies that the deviation is small. The following bound is very convenient. 

\begin{theorem}[Expander Mixing Lemma~\cite{AlonChung,KS-pseudo}]
Let $G$ be a $d$-regular graph. Then for any two sets of vertices $A,B\subseteq V(G)$, the number $e(A,B)$ of edges of $G$ with one endpoint in $A$ and another endpoint in $B$ satisfies
$$
 \left|e(A,B)-\frac{d|A||B|}{n}\right|\leq\lambda\sqrt{|A||B|}.
$$
\end{theorem}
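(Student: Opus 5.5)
The statement immediately above is the Expander Mixing Lemma, which is cited from \cite{AlonChung,KS-pseudo}. I would prove it by the standard spectral argument. Let $A$ be the adjacency matrix of the $d$-regular graph $G$. Take an orthonormal eigenbasis $v_1,\ldots,v_n$ of $A$ with eigenvalues $d=\lambda_1\ge\cdots\ge\lambda_n$, where $v_1=\mathbf{1}/\sqrt{n}$; this choice is legitimate since $A\mathbf{1}=d\mathbf{1}$. Writing $\mathbf{1}_X$ for the indicator vector of $X$, the convention for $e(A,B)$ fixed in the text (edges inside $A\cap B$ counted twice) gives exactly
$$
e(A,B)=\mathbf{1}_A^{T}A\,\mathbf{1}_B .
$$
So the whole task reduces to estimating a bilinear form.

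First I decompose $\mathbf{1}_A=\sum_i\alpha_iv_i$ and $\mathbf{1}_B=\sum_i\beta_iv_i$. Then $\alpha_1=\langle\mathbf{1}_A,\mathbf{1}\rangle/\sqrt n=|A|/\sqrt n$ and $\beta_1=|B|/\sqrt n$. This yields
$$
\mathbf{1}_A^{T}A\,\mathbf{1}_B=\sum_{i=1}^n\lambda_i\alpha_i\beta_i=\frac{d|A||B|}{n}+\sum_{i\ge2}\lambda_i\alpha_i\beta_i,
$$
so the main term $d|A||B|/n$ comes out exactly from the top eigenvector. For the error term, $|\lambda_i|\le\lambda$ for all $i\ge2$ by the definition of $\lambda$. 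Applying Cauchy--Schwarz,
$$
\Bigl|\sum_{i\ge2}\lambda_i\alpha_i\beta_i\Bigr|\le\lambda\Bigl(\sum_{i\ge2}\alpha_i^2\Bigr)^{1/2}\Bigl(\sum_{i\ge2}\beta_i^2\Bigr)^{1/2}\le\lambda\|\mathbf{1}_A\|\|\mathbf{1}_B\|=\lambda\sqrt{|A||B|}.
$$
This is the claimed inequality.

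There is no genuine obstacle. The only point that needs care is to check that the double counting of edges inside $A\cap B$ matches the bilinear form $\mathbf{1}_A^TA\mathbf{1}_B$: an edge $\{x,y\}$ with both ends in $A\cap B$ contributes $A_{xy}+A_{yx}=2$, while an edge from $A\setminus B$ to $B$ contributes exactly $1$, which is consistent with the stated convention. If one wanted the slightly sharper form with $\sqrt{|A||B|(1-|A|/n)(1-|B|/n)}$, this would follow by using $\sum_{i\ge2}\alpha_i^2=|A|-|A|^2/n$ and the analogous identity for $\beta$. The stated bound does not need it.
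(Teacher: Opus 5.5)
Your proof is correct. The paper does not prove this lemma itself but cites Alon--Chung and Krivelevich--Sudakov, and your argument (decompose in an orthonormal eigenbasis containing $\mathbf{1}/\sqrt{n}$, then apply Cauchy--Schwarz to the components orthogonal to $\mathbf{1}$) is exactly the standard spectral proof behind that citation.

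One point in your convention check needs tightening. An edge between $A\cap B$ and $B\setminus A$ also contributes $1$ to $\mathbf{1}_A^{T}A\,\mathbf{1}_B$, so what you bound is the ordered-pair count $|\{(a,b)\in A\times B : ab\in E(G)\}|$, and this is the definition the lemma needs. The paper's parenthetical omits that case, and read literally it makes the inequality false: take $G=K_n$, $A=\{a\}$, $B=V(G)$, where the literal count is $0$ against $d|A||B|/n=n-1$ and $\lambda\sqrt{|A||B|}=\sqrt{n}$.
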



We will apply the Expander Mixing Lemma together with an asymptotic bound on $\lambda$ for random regular graphs. It was first established by Friedman~\cite{Friedman} for constant $d\geq 3$, confirming the conjecture of Alon~\cite{Alon}. The case of $d\to\infty$ was then conjectured by Vu~\cite{Vu}. After a series of important contributions~\cite{AKV,BFSU,CGJ,KSVW,TY}, it was resolved for all $d=o(n)$ by Bauerschmidt, Huang, Knowles, and Yau~\cite{BHKY} and Sarid~\cite{Sarid}, and then for $d=\Theta(n)$ by He~\cite{He}. 

\begin{theorem}[\cite{BHKY,He,Sarid}]
\label{th:lambda}
Let $3\leq d\leq n/2$ and $\mathbf{G}_n\sim\mathcal{G}_{n,d}$. 
 Then, whp $\lambda(\mathbf{G}_n)\leq(2+o(1))\sqrt{d(1-d/n)}$. 
\end{theorem}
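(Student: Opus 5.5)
This statement is imported from the literature rather than proved from scratch. My plan is to assemble it from the known spectral results for $\mathcal{G}_{n,d}$, checking that their ranges of $d$ overlap and that their normalisations agree with $(2+o(1))\sqrt{d(1-d/n)}$. There is no new probabilistic content. The work is a case split on the growth of $d=d(n)$ plus a subsequence argument to handle arbitrary sequences $d(n)$.

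First, I reduce to the case where $d(n)$ lies in a single regime. Suppose the conclusion fails. Then for some $\varepsilon>0$ there are a subsequence $n_j$ and $\delta>0$ with $\mathbb{P}\bigl(\lambda>(2+\varepsilon)\sqrt{d(1-d/n)}\bigr)\geq\delta$ along $n_j$. Passing to a further subsequence, I may assume one of the following holds:
\begin{enumerate}
\item $d$ is constant;
\item $d\to\infty$ with $d=o(n)$, further split into $d\leq n^{2/3}$ and $n^{2/3}\le d = o(n)$ if the cited ranges require it;
\item $d/n\to c\in(0,1/2]$.
\end{enumerate}
It then suffices to verify the bound in each regime separately.

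The regimes are handled as follows.
\begin{itemize}
\item Constant $d\geq 3$. Friedman's theorem~\cite{Friedman} gives $\lambda\leq 2\sqrt{d-1}+o(1)$ whp. Since $d-1<d(1-d/n)$ once $n>d^2$, this is at most $(2+o(1))\sqrt{d(1-d/n)}$.
\item $d\to\infty$ with $d=o(n)$. The results of Bauerschmidt--Huang--Knowles--Yau~\cite{BHKY} and Sarid~\cite{Sarid} give $\lambda\leq(2+o(1))\sqrt{d-1}$ whp. Here $1-d/n=1-o(1)$, so this matches the claimed bound.
\item $d=\Theta(n)$. He~\cite{He} gives $\lambda=(2+o(1))\sqrt{d(1-d/n)}$ whp; the factor $1-d/n$ is genuinely needed here.
\end{itemize}

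If some boundary range were not covered, I would use the complement. For a $d$-regular $G$, the non-trivial eigenvalues of $\overline{G}$ are $-1-\lambda_i$, so $\lambda(\overline G)\leq\lambda(G)+1$. The quantity $\sqrt{d(1-d/n)}$ is asymptotically invariant under $d\mapsto n-1-d$. Hence a bound for $\mathcal{G}_{n,n-1-d}$ transfers to $\mathcal{G}_{n,d}$ with only an additive $1$, and this is negligible since $d\to\infty$.

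The main obstacle is bookkeeping at the seams. I need to confirm that the hypotheses of the cited theorems (e.g.\ lower bounds like $d\geq n^{\epsilon}$ or upper bounds like $d\leq n^{2/3}$ in~\cite{BHKY}) jointly cover every sequence. I also need each $o(1)$ to be a genuine whp statement along arbitrary sequences $d(n)$, not only for fixed exponents. I would check the exact stated ranges first, and patch any gap by the complement trick or the sandwich coupling of \cref{thm:sandwich} combined with known $\mathcal{G}(n,p)$ spectral bounds.
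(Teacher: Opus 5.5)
Your proposal matches the paper, which likewise imports this result without proof, attributing constant $d$ to Friedman, $d=o(n)$ to Bauerschmidt--Huang--Knowles--Yau and Sarid, and $d=\Theta(n)$ to He; your subsequence reduction is just a careful way of stitching those ranges together. One caution: the sandwich-coupling fallback could not rescue an uncovered range, because it is unavailable for $d<C\log n$, and otherwise $\mathbf{G}\setminus\mathbf{G}_*$ has degrees of order $\epsilon d\gtrsim\sqrt{d\log n}\gg\sqrt d$ (forced just to absorb the degree fluctuations of $\mathcal{G}(n,p)$), so perturbation gives only $\lambda\leq(2+o(1))\sqrt d+O(\epsilon d)$ and loses the sharp constant $2$, which means the argument must rest entirely on the cited ranges covering every $d$.
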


We will also require a finer expansion result ensuring that, for every set, its size remains concentrated after several rounds of expansion.

\begin{lemma}
\label{lm:common_expansion_Thm3}
Let $c>0$ be small enough and $d_0 \in \mathbb{N}$ be large enough (independent of $c$). Let $d_0\leq d \le n/2$ and $\mathbf{G}_n\sim\mathcal{G}_{n,d}$. Then, the following holds whp: 
for every set $U$ of size $u = |U| \leq \frac{cn}{d}$ and for every positive integer $r$ such that $ud(d-1)^{r-1} \le cn$, 
$$
|S_r(U)| \ge (1-100c-4\ln d/d)ud(d-1)^{r-1}.
$$
\end{lemma}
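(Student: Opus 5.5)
We reduce the statement to a local sparsity property of $\mathbf{G}_n$: small vertex sets span few excess edges. Then we run a deterministic counting argument on the BFS layers around $U$.

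\textbf{Step 1 (deterministic reduction).} Fix $U$ and $r$ as in the statement and write $s_i=|S_i(U)|$. Every vertex of $S_{i}(U)$, $i\ge1$, has one edge back towards $S_{i-1}(U)$ (its BFS parent). Hence each such vertex sends at most $d-1$ edges forward, and each vertex of $U$ sends at most $d$. A loss in the growth of the layers arises from exactly two sources:
\begin{itemize}
\item edges inside $B_{i}(U)$ other than those of a spanning BFS forest rooted at $U$ (internal edges within a layer, or additional back-edges);
\item vertices of $S_{i+1}(U)$ that receive more than one edge from $S_i(U)$.
\end{itemize}
In both cases, every unit of loss at level $i$ costs at most $(d-1)^{r-i}$ vertices of $S_r(U)$. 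Define the \emph{excess} of a set $W\supseteq U$ as $e(W)-(|W|-|U|)$, where $e(W)$ is the number of edges of $\mathbf{G}_n$ with both endpoints in $W$. Then
\[
ud(d-1)^{r-1}-|S_r(U)|\le \sum_{i<r} \mathrm{ex}_i\,(d-1)^{r-1-i},
\]
where $\mathrm{ex}_i$ is the excess created when passing from level $i$ to level $i+1$ (including the edges inside $U$). It therefore suffices to show that, whp and simultaneously for all $U$, the excess created at each level is a small fraction of $s_{i+1}$. Concretely, we aim for
\[
\mathrm{ex}_i\le \Bigl(\tfrac{\ln d}{d}+O(c)\Bigr)\, d(d-1)^{i}u .
\]
Summing the resulting geometric series then gives the loss $(100c+4\ln d/d)\,ud(d-1)^{r-1}$.

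\textbf{Step 2 (probabilistic input).} We show that whp every set $W$ of size $w\le 2cn$ spans at most
\[
w\Bigl(1+\tfrac{C\ln d}{d}+C'\tfrac{wd}{n}\Bigr)
\]
``extra'' edges. More precisely, we prove that whp there is no set $W$ with $|W|=w$ that contains a connected structure of $w-|U|$ tree edges plus $x$ additional edges, where
\[
x\ge \max\Bigl\{\tfrac{3 w\ln d}{d},\ \tfrac{Cw^2d}{n}\Bigr\}\quad\text{(roughly)}.
\]
The tool is \cref{cl:regular-probability-bound}. The probability that a prescribed graph $H$ with $w-|U|+x$ edges lies in $\mathbf{G}_n$ is at most $(2d/n)^{w-|U|+x}$. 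The number of choices of $H$ is controlled as follows:
\begin{itemize}
\item choose $U$: $\binom{n}{u}$ ways;
\item grow a forest rooted at $U$: at most $n^{w-u}$ ways, times a factor $\mathrm{e}^{O(w)}$ for the shape of the forest;
\item place the $x$ extra edges: $\binom{\binom w2}{x}$ ways.
\end{itemize}
The forest factor gives roughly $(2d)^{w-u}$, which is offset by the union over $U$ with room of order $(n/u)^u$. Since $u\le cn/d$ forces $w\lesssim cn$, this offset needs care. The extra edges contribute
\[
\Bigl(\frac{ew^2 d}{xn}\Bigr)^x,
\]
which is tiny once $x\gg w^2d/n$.

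\textbf{Step 3 (two regimes for $w$).} For small $w$ (say $w\le n/d^2$), excess edges are rare. Here we use the threshold $x\approx w\ln d/d$: the term $(w^2d/(xn))^x$ beats the entropy factor $\mathrm{e}^{O(w)}$ only through $\ln d$, and this is where the $4\ln d/d$ term originates. For larger $w$ (up to $cn$), the term $w^2d/n$ dominates and yields the $100c$ loss. At the top level the excess is about $s_r^2d/n\le c\,s_r$, which is consistent.

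\textbf{Main obstacle.} The delicate point is making the union bound over all $U$ and all BFS-shaped subgraphs tight enough to lose only a $(1-O(c)-O(\ln d/d))$ factor rather than a constant factor. Counting rooted forests on $w$ vertices costs $\mathrm{e}^{O(w)}$, and this entropy must be paid for by at most $x$ excess edges. My first attempt would be to avoid enumerating forests altogether. Instead, I would bound directly the probability that some set $W$ of size $w$ satisfies $e(W,W)\ge w-u+x$, using $\binom{n}{w}\binom{\binom w2}{w-u+x}(2d/n)^{w-u+x}$. If this is too lossy for small $u$, I would fall back on the edge-exposure/switching argument, revealing the BFS layer by layer. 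In that argument each newly revealed edge lands in the already-explored set with probability $O(|B_i|/n)$, and a Chernoff bound (\cref{rk:hypergeometric}) is applied at each level.
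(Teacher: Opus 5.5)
\textbf{There is a genuine gap.} Your main line cannot give a loss of order $c+\ln d/d$. Any first-moment count over the ball itself (forest plus extra edges, or sets spanning many edges) must pay the union-bound cost of the forest. Counting vertex choices, shape and edge probabilities together, that cost is $e^{\Theta(w\ln d)}$, not $e^{O(w)}$: choosing the $w-u$ non-root vertices and a forest rooted at $U$, and using \cref{cl:regular-probability-bound}, gives $\binom{n}{w-u}\,u\,w^{w-u-1}(2d/n)^{w-u}$, which lies between $(2d)^{w-u}u/w$ and $(2ed)^{w-u}e^{u}$. This multiplies $\binom nu$; it is not offset by it. The only compensation is $\binom{\binom w2}{x}(2d/n)^x\le(ew^2d/(xn))^x$. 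For $x\approx 3w\ln d/d$ this equals $\exp\bigl(-\tfrac{3w\ln d}{d}\ln\tfrac{3n\ln d}{ewd^2}\bigr)$, which beats $e^{\Theta(w\ln d)}$ only when $n/(wd^2)\ge e^{\Omega(d)}$.

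Your set-based variant fares no better. Since $\binom nw\binom{\binom w2}{w-u+x}(2d/n)^{w-u+x}\approx(e^2d)^w(ewd/n)^{x-u}$, it forces $x/w\gtrsim\ln d/\ln(n/(wd))$, and it gives nothing once $w\gtrsim n/d$. At the top level, with $w\approx cn$, restrict the extra edges to the $O(w^2/d)$ pairs that actually matter. You would still need $x\ln\bigl(xn/O(w^2)\bigr)=\Omega(w\ln d)$, which is impossible for $x=O(cw)$.

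Relatedly, your estimate ``$s_r^2d/n\le c\,s_r$'' is off by a factor $d$. For $W\supseteq S_r(U)$, $e(W)$ contains about $ds_r^2/(2n)$ edges inside $S_r(U)$; these cost nothing but can be of order $cd\,s_r$. The quantity that matters is the number of forward edges from $S_{r-1}(U)$ that hit already reached vertices, which is about $s_r^2/n$.

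Step~1 is also mis-calibrated. With $\mathrm{ex}_i\le(\ln d/d+O(c))\,d(d-1)^iu$, your bound on every term $\mathrm{ex}_i(d-1)^{r-1-i}$ is the same quantity $(\ln d/d+O(c))\,ud(d-1)^{r-1}$. So the sum is $r$ times the allowed loss, not a geometric series, and $r$ can be of order $\log n/\log d$.

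The route you list as a fallback is the one that works; it is the paper's proof. It needs three specific ingredients.
\begin{enumerate}
\item Take the union bound over $U$ only, at cost $\binom nu\le(ed/c')^u$ with $c'=ud/n$. For fixed $U$ the ball is revealed, never enumerated, so its entropy is never paid.
\item Expose the forward edges of layer $i$ one at a time. Show by a switching that, whatever the history, each lands in $S_{i-1}(U)$ or on an already found vertex of $S_i(U)$ with probability at most $2c'(d-1)^{i-1}$. The number of bad edges is then dominated by $\mathrm{Bin}\bigl(ud(d-1)^{i-1},2c'(d-1)^{i-1}\bigr)$. Here you need the tail $\binom Nt p^t\le(eNp/t)^t$, since \eqref{eq:chern_lb} only gives $e^{-\Theta(t)}$ for $t\gg Np$. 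For $u=1$ at the first layer $t\approx 2\ln d$, so a $d^{-O(1)}$ bound does not beat the $n$ choices of $U$ when $d=n^{o(1)}$.
\item Use level-dependent thresholds $\varepsilon_i=\frac{2\ln d}{2^{i-1}d}+50c'(d-1)^{i-1}$. The $\ln d/d$ part may decay in $i$, because the fixed union-bound cost $u\ln(ed/c')$ is spread over $ud(d-1)^{i-1}$ trials. The second part tracks the relative size of the explored ball. Hence $\sum_{i\le r}\varepsilon_i=O(\ln d/d)+O(c'(d-1)^{r-1})=O(\ln d/d+c)$.
\end{enumerate}
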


The proof of this lemma is technically involved. To avoid interrupting the flow of the paper, we present it separately in \cref{sc:expansion-lemma-proof}.

\section{Proof of \cref{thm:main_many_large_set}: Dense Case}
\label{sc:dense-many-large}

Here we prove the following.


\begin{theorem}\label{thm:dense_two_rounds}
Let $\varepsilon\in(0,1/2)$, $n^{1/2+\varepsilon} \leq d=d(n)\leq n/2$, and let $\mathbf{G}_n \sim \mathcal{G}_{n,d}$. Let $k\in\mathbb{N}$ be an arbitrary constant. Then, the following holds whp: for every non-trivial partition $[n]=V_1\sqcup V_2$ of the vertex set of $\mathbf{G}_n$, after two rounds of CR, there exists a partition $[n] = V_1 \sqcup \ldots \sqcup V_k$ such that for any $i \in [k]$, $n/3k \le |V_i| \le 3n/k$ and $V_i$ is a union of some colour classes. 
\end{theorem}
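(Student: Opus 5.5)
The proof has two stages, matching the outline in the introduction: one round of CR to obtain a coarsening $[n]=U_1\sqcup U_2$ with both parts of size $\gg n/d$, and a second round to split every colour class into pieces of size at most $\delta n$ for a small constant $\delta=\delta(k)$. Given the second stage, the partition into $k$ parts follows greedily. We list the colour classes after round two, each of size at most $\delta n\le n/(3k)$, and fill bins one at a time up to size about $n/k$. Each bin then overshoots by at most $\delta n$, so every part has size in $[n/3k,3n/k]$.

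\textbf{Stage 1.} Let $|V_1|\le |V_2|$. If $|V_1|\gg n/d$ there is nothing to do, so assume $1\le |V_1|=s\le Cn/d$. After one round, each vertex in $V_2$ is coloured by $N_{V_1}(x)\in\{0,1,\ldots\}$. Take $U_1=N(V_1)\cap V_2$, i.e.\ the vertices of $V_2$ with at least one neighbour in $V_1$, and $U_2$ the rest, with $V_1$ absorbed into one side; this is a union of colour classes. By the Expander Mixing Lemma with \cref{th:lambda}, $e(V_1,V_2)\approx ds$. A vertex of $V_2$ receives at most $\max(s, d)$ edges from $V_1$, and in fact few vertices can receive many: applying the mixing lemma to the set of vertices receiving $\ge t$ such edges bounds its size. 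Hence $|N(V_1)|\ge \min(ds/2,\,\Omega(n))$, and in particular $|N(V_1)|\ge d/2\gg n/d$ since $d\ge n^{1/2+\varepsilon}$. The complement side has size $\ge n/2-|N(V_1)|$, or we swap roles. The degenerate case where $N(V_1)$ covers nearly everything is handled by using the colour ``exactly $j$ neighbours in $V_1$'' for a suitable $j$, again via the mixing lemma. This stage is all deterministic on the whp event $\lambda\le 3\sqrt d$, so it holds uniformly over all partitions.

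\textbf{Stage 2.} This is the main obstacle. We must show that whp, for every pair of disjoint sets $U_1,U_2$ with $|U_1|,|U_2|\ge n^{1/2}$ (say), there is no set $W$ with $|W|\ge\delta n$ whose vertices all have the same pair $(N_{U_1}(w),N_{U_2}(w))$. A union bound over the $U_i$ costs $4^n$, and over $W$ and the common values costs $2^n n^2$. So we need, for fixed $U_1,U_2,W$ and target values $(a,b)$, that $\mathbb P(\text{all } w\in W \text{ have profile } (a,b))\le e^{-Cn}$ with $C$ large.

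We would fix the degree sequences of the subgraphs $\mathbf G_n[W]$, $\mathbf G_n[W\times \overline W]$ and so on only partially, and count graphs directly. Conditional on everything except the bipartite graph between a chunk $W'\subset W$ of size $\delta n/2$ and $U_1\setminus W$, the number of completions is a bipartite degree-sequence count. Prescribing every $w\in W'$ to have exactly $a'$ neighbours in $U_1\setminus W$ is a hypergeometric-type constraint. We would use \cref{thm:newcount} (and its bipartite analogue obtained from \eqref{eq:gprecise}) together with \cref{lm:even} to compare the constrained count with the unconstrained one. Each vertex then contributes a factor like \eqref{eq:hyp1}, roughly $O(\sqrt{1/\min(|U_1|d/n,\,d)})=n^{-\Omega(\varepsilon)}$. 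Across $\delta n/2$ vertices this gives $n^{-\Omega(\varepsilon\delta n)}$, which beats $e^{O(n)}$. The delicate points are the following. First, the degrees of $W$-vertices into $U_1$ deviate from $d|U_1|/n$, and the counting theorem needs deviations $\le d^{1/2+\varepsilon'}$; we restrict to typical $w$, which is fine by Chernoff via \cref{rk:hypergeometric} and \cref{thm:sandwich}. Second, we need to control the $\exp(-\Theta(\eta^2/d^2))$ correction, and to pass from regular to bipartite counts via \eqref{eq:hyp2} and \eqref{eq:hyp3}.
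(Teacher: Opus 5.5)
Your two-stage outline matches the paper's (\cref{cl:intermediate_small}, then \cref{cl:intermediate_large}), but both stages have a genuine gap.

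\textbf{Stage~1.} The ``degenerate'' case $d|V_1|\gtrsim n$ cannot be handled by the Expander Mixing Lemma. The mixing lemma only bounds how many vertices have atypically many (or few) neighbours in $V_1$. It gives no lower bound on $|V_2\setminus N(V_1)|$. It is also entirely consistent with all but $o(n/d)$ vertices of $V_2$ having exactly the same number $j\approx d|V_1|/n$ of neighbours in $V_1$: for $B\subseteq V_2$ one then has $e(V_1,B)=j|B|\approx d|V_1||B|/n$, so there is no discrepancy to detect. In that situation every union of ``exactly $j$'' classes is either tiny or almost everything. What you need here is anti-concentration, and spectral information cannot supply it.

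The paper gets it from \cref{thm:sandwich}. For $n/(2d)<|V|<Cn/d$ it uses the \emph{monotone} event ``at least $t=4d|V|/n$ neighbours in $V$''. The resulting set is sandwiched between counts distributed as $\mathrm{Bin}(n-v,q_\pm)$ with $\xi_-<q_-\le q_+<e^{-1}$, using the Poisson limit since $d|V|/n=\Theta(1)$. This gives $\Theta(n)\le|U_1|\le n/2$ with failure probability $e^{-\Theta(n)}$, which beats the $e^{O(n^{1/2-\varepsilon}\log n)}$ choices of $V$. Using a threshold rather than ``exactly $j$'' is what lets the coupling transfer both bounds.

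\textbf{Stage~2, thresholds.} The thresholds of the two stages do not match. Stage~1 only delivers $\min|U_i|\ge Cn/d$, and you skip $|V_1|\in(Cn/d,n^{1/2})$ altogether. Your per-vertex gain $n^{-\Omega(\varepsilon)}$, however, needs $|U_1|d/n\ge n^{\Omega(\varepsilon)}$. At $|U_1|\asymp Cn/d$ the expected degree into $U_1$ is $\Theta(C)$, so each vertex of the large class gains only $\Theta(C^{-1/2})$. You must then show that all other losses are $e^{c_0 n}$ with $c_0=c_0(\delta)$ independent of $C$, and take $C(\delta)$ huge; the paper needs $C(\delta)\ge 4(1200/\delta)^{1/\delta}$.

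\textbf{Stage~2, counting.} The counting step as described yields no anti-concentration. If you condition on every edge outside the bipartite graph $W'\times U_1$, the degree of each $w\in W'$ in that bipartite graph is $d$ minus its already exposed degree. So the event is decided by the conditioning itself.

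More seriously, by \eqref{eq:Gnd} we have $g(\mathbf d)=\binom{\binom{n}{2}}{dn/2}d^{-\Theta(n)}$. Dividing any edge-count bound by $g(\mathbf d)$ therefore costs about $(7d)^{n/2}\ge n^{n/4}$, far more than your claimed gain $n^{-\Omega(\varepsilon\delta n)}$. Your sketch does not say how this is repaid.

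The paper repays it vertex by vertex:
\begin{enumerate}
\item It bounds the number of graphs in the event by $\binom{v}{k}^u\binom{(n-v-u)v}{D}$ times the balanced-degree counts (\cref{lm:even}, \cref{thm:newcount}) for $G[V]$ and $G[[n]\setminus V]$.
\item It applies \eqref{eq:hyp2} to all three powered binomials, extracting the factors $(4n/dv)^{u/2}$, $(3n/vd)^{v/2}$ and $(3/\delta d)^{(n-v)/2}$.
\item It reassembles the remaining binomials into $\binom{\binom{n}{2}}{dn/2}$ via \eqref{eq:hyp3} and \eqref{eq:hyp0}.
\end{enumerate}
The factors $(3n/vd)^{v/2}$ and $(3/\delta d)^{(n-v)/2}$ cancel $(7d)^{n/2}$ up to $(42/\delta)^{n/2}$. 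Only the $U$-factor $(4n/dv)^{u/2}\le(4/C)^{\delta n/2}$ remains as a net gain. No bipartite enumeration is needed, and none follows from \eqref{eq:gprecise}.
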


\cref{thm:dense_two_rounds} follows easily from the following two claims. 

\begin{claim}\label{cl:intermediate_small}
Let $\varepsilon\in(0,1/2)$, $n^{1/2+\varepsilon} \leq d=d(n)\leq n/2$, and let $\mathbf{G}_n \sim \mathcal{G}_{n,d}$. 
For every $C>1$, the following property holds whp: for every non-trivial partition $[n]=V_1\sqcup V_2$ with $\min\{|V_1|,|V_2|\} < C n/d$, after one round of CR, there exists a partition $[n]=U_1 \sqcup U_2$ with $\min\{|U_1|,|U_2|\} \ge C n/d$ such that $U_1$ and $U_2$ are unions of some colour classes (in other words, $U_1 \sqcup U_2$ is a coarsening of the partition
associated with resulting colour classes).
\end{claim}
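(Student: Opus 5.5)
Write $V_1$ for the smaller class, $s=|V_1|$, with $1\le s<Cn/d$. After one round of CR, the colour of a vertex $v$ records whether $v\in V_1$ and the number $N_{V_1}(v)$ of neighbours of $v$ in $V_1$. The plan is to take
$$U_1=\{v\in V_2: N_{V_1}(v)=0\},\qquad U_2=[n]\setminus U_1 = V_1\cup\{v\in V_2 : N_{V_1}(v)\ge 1\}=B_1(V_1).$$
Both sets are unions of colour classes, so it remains to show that whp both have size at least $Cn/d$, uniformly over all choices of $V_1$.

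The lower bound on $|U_2|$ comes from the expansion of small sets. Fix any nonempty $V_1$ with $s<Cn/d$. Then $|U_2|\ge|N(V_1)|$, and we only need $|N(V_1)|\ge Cn/d$.
\begin{itemize}
\item If $s=1$, then $|N(V_1)|=d\ge n^{1/2+\varepsilon}\gg Cn/d$, since $d^2\ge n^{1+2\varepsilon}$.
\item If $s\ge 2$, take any vertex $v\in V_1$; then $|N(V_1)|\ge |N(v)\setminus V_1|\ge d-s$. If $s\le d/2$, this is at least $d/2\gg Cn/d$.
\item In general $s<Cn/d\le Cn^{1/2-\varepsilon}\ll d$, so $s\le d/2$ always holds and the previous case covers everything.
\end{itemize}
This step is deterministic and needs no randomness at all.

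The upper bound on $|U_2|$ uses the Expander Mixing Lemma. We need $|U_1|\ge Cn/d$, i.e.\ $|B_1(V_1)|\le n-Cn/d$. Trivially $|N(V_1)|\le sd<Cn$, which is useless when $C>1$, so we apply the lemma with $A=V_1$ and $B=U_1$, using \cref{th:lambda} to get whp $\lambda\le 3\sqrt d$. Suppose for contradiction that $|U_1|<Cn/d$. Then $|U_2|=n-|U_1|$, and
$$e(V_1,U_2)=sd \quad\text{but}\quad \frac{d\,s\,|U_2|}{n}+\lambda\sqrt{s|U_2|}\ \ge\ sd - \frac{d\,s\,|U_1|}{n}+\dots,$$
which gives no contradiction in this direction. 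Instead I would use $e(V_1,U_1)=0$ together with the lower bound in the Expander Mixing Lemma applied to $A=V_1$ and $B=U_1$. This is consistent with $|U_1|$ being small and again yields nothing.

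So I would count directly. Each vertex of $V_1$ has exactly $d$ neighbours, so $|N(V_1)|\le sd$, while by the Expander Mixing Lemma $e(V_1,V_1)\le ds^2/n+\lambda s$, and hence $|N(V_1)|\ge sd-O(ds^2/n+\sqrt d s)$. The question is whether $|B_1(V_1)|$ can cover all but fewer than $Cn/d$ vertices. For $s\le n/(2d)$ this is immediate, since $|B_1(V_1)|\le s(d+1)\le n/2+s$. For $n/(2d)<s<Cn/d$, I would apply the Expander Mixing Lemma to $A=V_1$ and the complement $B=[n]\setminus B_1(V_1)$, where $e(A,B)=0$. This forces
$$ds|B|/n\le \lambda\sqrt{s|B|},\qquad\text{i.e.}\qquad |B|\le \lambda^2 n^2/(d^2 s)\le 9n^2/(ds)\approx 18n/C\cdot\frac{1}{1}\cdot\frac{C}{d}\dots,$$
which is only an upper bound on $|B|$ and again goes the wrong way. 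This step is the main obstacle: it requires showing that a set of size about $n/d$ does not dominate all but $Cn/d$ vertices. I would handle it with a first-moment union bound over sets $V_1$ of size $s\le Cn/d$ and $W$ of size $n-Cn/d$ minus the ball, using \cref{cl:regular-probability-bound}, or a switching bound, to show that $\mathbb{P}(\text{some } |W|\ge Cn/d \text{ has no edge to } V_1)$ is large. Equivalently, whp for every $V_1$ the number of vertices with no neighbour in $V_1$ is at least $n(1-d/n)^{s}(1-o(1))\ge ne^{-2C}$, which is far more than $Cn/d$. I would prove this lower tail via the sandwich \cref{thm:sandwich} (valid here since $d\gg\log n$), coupling $\mathbf{G}_n\subseteq \mathcal{G}(n,(1+\epsilon)d/n)$. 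In the binomial model the count is a sum of independent indicators, with failure probability $\exp(-\Omega(n e^{-2C}))$ by \eqref{eq:chern_ub}. This beats the $\binom{n}{\le Cn/d}=\exp(O(Cn\log n/d))$ union bound because $d\gg\log n$.
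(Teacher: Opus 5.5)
Your final argument is correct, though it follows a different route from the paper. The several abandoned Expander Mixing Lemma attempts should be deleted: all of them bound $|[n]\setminus B_1(V_1)|$ from above rather than below, as you noticed.

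The paper uses your split, with $U_1$ the vertices outside $V$ having a neighbour in $V$, only when $|V|\le n/(2d)$. There everything is deterministic. For $n/(2d)<|V|<Cn/d$ it switches to a threshold split $U_1=\{w\notin V: N_V(w)\ge 4d|V|/n\}$ and needs both sides of \cref{thm:sandwich}:
\begin{itemize}
\item the upper coupling with a Chernoff upper tail, to get $|U_1|\le n/2$;
\item the lower coupling, a Poisson approximation giving $q_->\xi_->0$, and a Chernoff lower tail, to get $|U_1|=\Omega(n)$.
\end{itemize}

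You instead keep the zero-neighbour split for every $s<Cn/d$. The bound $|U_2|\ge|N(V_1)|\ge d-s\ge d/2\gg Cn/d$ is deterministic, because $s\le Cn^{1/2-\varepsilon}\ll d$. The number of vertices with no neighbour in $V_1$ is monotone decreasing in the edge set, so only the coupling $\mathbf{G}_n\subseteq\mathcal{G}(n,p_+)$ is needed. In $\mathcal{G}(n,p_+)$, for fixed $V_1$, this count is $\mathrm{Bin}(n-s,(1-p_+)^s)$, since the indicators depend on disjoint sets of pairs. Its success probability is at least $e^{-p_+s/(1-p_+)}$, a positive constant depending only on $C$, because $p_+s=O(C)$ and $p_+\le(1+\epsilon)/2$. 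This is slightly worse than the $e^{-2C}$ you state, which is immaterial. So \eqref{eq:chern_ub} gives failure probability $e^{-\Omega(n)}$, which beats the $\exp(O(Cn\log n/d))=\exp(O(n^{1/2-\varepsilon}\log n))$ choices of $V_1$.

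Your version is shorter and avoids both the threshold choice and the Poisson step. The paper's threshold additionally keeps the distinguished class between $\Omega(n)$ and $n/2$. However, the claim, and its use in \cref{thm:dense_two_rounds}, only needs $\min\{|U_1|,|U_2|\}\ge Cn/d$, so nothing is lost.
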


\begin{claim}\label{cl:intermediate_large}
Let $\varepsilon\in(0,1/2)$, $n^{1/2+\varepsilon} \leq d=d(n)\leq n/2$, and let $\mathbf{G}_n \sim \mathcal{G}_{n,d}$. 
For any $\delta \in (0,1]$, there exists $C= C(\delta)>0$ such that the following property holds whp: for every non-trivial partition $[n]=V_1\sqcup V_2$ with $\min\{|V_1|,|V_2|\} \ge C n/d$, there is no colour class of size more than $\delta n$ after one round of CR.
\end{claim}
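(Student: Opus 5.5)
We must prove Claim~\ref{cl:intermediate_large}: whp, for every partition $V_1\sqcup V_2$ with both parts of size at least $Cn/d$, no set $W$ of $\delta n$ vertices has all its members sharing one colour after one round. After one round, a vertex's colour is determined by its initial part together with its number of neighbours in $V_1$ (its degree into $V_2$ is then $d$ minus this). So it suffices to rule out the following event: there exist $V_1$ with $Cn/d\le |V_1|\le n-Cn/d$, a set $W$ with $|W|=\delta n$, and an integer $s$ such that every $w\in W$ has exactly $s$ neighbours in $V_1$. We bound the probability of this event for fixed $(V_1,W,s)$ and take a union bound over at most $n\cdot 4^n$ triples. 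Hence we need a bound of order $e^{-\omega(n)}$, or at least $e^{-Kn}$ with $K$ large compared with $\ln 4$.

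\textbf{Reduction to a bipartite count.} By symmetry (swap $V_1$ and $V_2$, and restrict attention to half of $W$) we may assume $|V_1|\le n/2$ and $W\cap V_1=\emptyset$ after shrinking $W$ to size $\delta n/2$. Write $a=|V_1|$ and $p=d/n$. For each $w\in W$ the number $N_{V_1}(w)$ should behave like a hypergeometric variable with mean $ap\ge C$ and variance of order $ap(1-p)\ge C/2$. The key estimate is therefore
\[
\mathbb{P}\bigl(N_{V_1}(w)=s\ \ \forall w\in W\bigr)\le \bigl(O(1/\sqrt{ap})\bigr)^{\Omega(|W|)}\le (c_0/\sqrt{C})^{\Omega(\delta n)}.
\]
Choosing $C=C(\delta)$ large makes this at most $e^{-10n}$, which beats the union bound.

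\textbf{Proving the key estimate.} I would count graphs through degree sequences using Theorem~\ref{thm:newcount}. Condition on the graph outside the bipartite piece $W\times V_1$, or more simply compare the number of $d$-regular graphs in which all vertices of $W$ have exactly $s$ neighbours in $V_1$ against the number in which these degrees are free. Concretely, split every vertex's adjacency into the part inside $W\times V_1$ and the rest, and express the count as a sum over degree sequences of the bipartite graph $W\times V_1$ and of its complement graph. For the bipartite part, the analogue of Theorem~\ref{thm:newcount} for bipartite graphs, or a switching argument, gives counts proportional to $\prod_{w}\binom{a}{d_w}$ times a factor $e^{O(1)}$. Corollary~\ref{cor:binomial_product}, specifically \eqref{eq:hyp2}, shows that forcing all $|W|$ rows to the common value $s$ costs a factor $(a/(s(a-s)))^{(|W|-1)/2}\approx (ap)^{-|W|/2}$ relative to the total $\binom{|W|a}{|W|s}$. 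Lemma~\ref{lm:even} lets me bound the count for the remaining graph (with the perturbed degree sequence) by the count for the balanced sequence, so the rest of the graph contributes no gain. Values of $s$ far from $ap$ are handled directly by Chernoff bounds (Remark~\ref{rk:hypergeometric}) combined with the sandwich theorem (Theorem~\ref{thm:sandwich}) or Claim~\ref{cl:H-inclusion}.

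\textbf{Main obstacle.} The hard step is making the enumeration rigorous. Theorem~\ref{thm:newcount} requires near-regular degree sequences and loses a factor $e^{O(1)}$ each time it is applied. Applying it only a constant number of times, to the whole graph with the prescribed row degrees and to the reference graph, keeps the loss at $e^{O(1)}$, which is negligible against $(c_0/\sqrt{C})^{\delta n}$. The error term $\exp(-\Theta(\eta^2/d^2))$ only helps. The delicate point is decomposing the $d$-regular graph so that the constraint "$N_{V_1}(w)=s$" becomes a product of binomials; this is where the density assumption $d\ge n^{1/2+\varepsilon}$ enters, since it makes the deviations $|d_i-d|\le d^{1/2+\varepsilon'}$ admissible. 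If the direct enumeration proves too messy, I would instead use a simple switching between $V_1$-edges and $V_2$-edges at a single vertex $w$ to obtain the one-vertex anti-concentration bound $O(1/\sqrt{ap})$, and then iterate it over a large subset of $W$ that is sparse enough for the switchings to interact only negligibly.
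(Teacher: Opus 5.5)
\textbf{There is a genuine gap, in the enumeration step.} Your reduction, the union bound over $O(n4^n)$ triples, the treatment of atypical $s$ by Chernoff plus sandwiching, and the idea that the gain comes from \eqref{eq:hyp2} applied to the rows all match the paper. One minor point: you cannot have both $|V_1|\le n/2$ and $W\cap V_1=\emptyset$. If the class lies in the smaller part you must count neighbours in the larger part, so the target set can have size up to $(1-\delta)n$, as the paper allows.

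The enumeration step is the whole difficulty, and your plan for it fails. By \eqref{eq:Gnd}, $g(\mathbf d)$ is smaller than $\binom{\binom n2}{dn/2}$ by a factor $d^{-\Theta(n)}$. So an upper bound on the numerator is useful only if it shows a penalty of order $d^{-1/2}$ at essentially every vertex, while losing at most $K(\delta)^n$ with $K$ independent of $C$.

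Your ``rest'' graph lives on $K_n$ minus the block $W\times V_1$, which is not a complete host, so neither tool applies to it:
\begin{itemize}
\item \cref{lm:even} fails because its switching needs the two vertices to have the same potential neighbours.
\item \cref{thm:newcount} fails for the same host reason. Moreover, the degrees $d-s$ on $W$ deviate from the average by about $s\approx ap$. For $|V_1|$ of linear size this is $\Theta(d)$, far outside $d^{1/2+\varepsilon'}$, and the density assumption does not change that.
\end{itemize}
Suppose you do what you wrote and bound the rest count by $g$ of the balanced sequence on all of $K_n$. Put $\lambda=d/(n-1)$ and take $a=|V_1|=Cn/d$. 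Then, up to $e^{o(n)}$, the resulting bound is
\[
\Bigl[\tbinom{a}{s}\lambda^{s}(1-\lambda)^{a-s}\Bigr]^{|W|}(1-\lambda)^{-a|W|},
\]
which for typical $s$ is at least $e^{(C-O(\log C))|W|}\gg 1$. The factor $(1-\lambda)^{-a|W|}\ge e^{ap|W|}\ge e^{C|W|}$ is the ignored cost of the rest graph avoiding the $a|W|$ pairs of the block. It swamps the $(ap)^{-|W|/2}$ anti-concentration gain.

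Your switching fallback is only a sketch. A single-vertex switching gives ratio estimates, not directly an $O(1/\sqrt{ap})$ bound. The paper's version of that machinery controls exceptional graphs through \cref{cl:regular-probability-bound}, which needs $d=o(n)$.

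\textbf{The missing idea is to tile $K_n$ by hosts the tools can handle.} Let $V$ be the target set and $U$ the class. Split the edges into four parts:
\begin{itemize}
\item $U\times V$, with rows fixed, giving at most $\binom{v}{k}^u$ choices;
\item $V\times([n]\setminus(U\cup V))$ with $D$ edges, counted trivially by $\binom{(n-v-u)v}{D}$;
\item the induced graph on $V$;
\item the induced graph on $[n]\setminus V$.
\end{itemize}
Once the first two parts are fixed, the two induced graphs have determined degree sequences on complete hosts. So \cref{lm:even}, \cref{thm:newcount} and \eqref{eq:hyp2} give a penalty $(3/(\delta d))^{1/2}$ per vertex of $[n]\setminus V$ and $(3n/(vd))^{1/2}$ per vertex of $V$.

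The four host sizes add up to exactly $\binom n2$, so there is no avoidance loss. Hence \eqref{eq:hyp3} and \eqref{eq:hyp0} merge all binomials into $\binom{\binom n2}{dn/2}$, which cancels against $g(\mathbf d)\ge\binom{\binom n2}{dn/2}(7d)^{-n/2}$. What remains is a loss of $(42/\delta)^{n/2}$, independent of $C$. It is beaten by $(4n/(dv))^{u/2}\le(4/C)^{\delta n/2}$ once $C=C(\delta)$ is large.

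This also requires the edge counts inside $V$ and $[n]\setminus V$ to be concentrated, so that the induced average degrees are of the right order. For $v\ge n^{3/4}$, beating the $4^n$ union bound here is where $d\ge n^{1/2+\varepsilon}$ is actually used. For smaller $v$ the $V$-penalty is dropped at a cost of $e^{n^{4/5}}$.
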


Before we prove these two claims, let us show how they imply \cref{thm:dense_two_rounds}.

\begin{proof}[Proof of \cref{thm:dense_two_rounds}]
Since we aim for the statement that holds whp, we may assume that the statements in \cref{cl:intermediate_small} and in \cref{cl:intermediate_large} hold deterministically. Fix any $k \in \mathbb{N}$, and let $\delta = 1/(3k)$. Let $C = C(\delta)$ be the large enough constant implied by \cref{cl:intermediate_large}. 

Consider any non-trivial partition $[n]=V_1\sqcup V_2$. If $\min\{|V_1|,|V_2|\}<C n/d$, then after one round of CR (and coarsening), we get a partition into two colour classes in which one of the colour classes has size at least $C n/d$ but at most $n/2$ (by~\cref{cl:intermediate_small}). After another round of CR, all colour classes have size at most $\delta n = n /(3k)$ (by~\cref{cl:intermediate_large}). If $\min\{|V_1|,|V_2|\} \ge C n/d$, then we get the above property after a single round of CR. 

To get the desired partition into $k$ parts, each of size at least $n/(3k)$, one can iteratively merge any two colour classes of size at most $\delta n$ until there is at most one such class remaining. After possibly merging this last class (if it exists) with any other arbitrarily chosen class, we get at least $k=1/(3\delta)$ classes (but at most $3k$ of them), each of size at least $\delta n = n/3k$ but at most $3 \delta n$. Finally, if there are more than $k$ classes, one can arbitrarily merge some triples of them (and, perhaps, one pair) to get exactly $k$ classes, each of size at most $9 \delta n = 3n/k$. This finishes the proof of the theorem.
\end{proof}

\cref{cl:intermediate_small} has a short and easy proof so let us start with it. 

\begin{proof}[Proof of \cref{cl:intermediate_small}]
Consider any partition $[n]=V \sqcup ([n] \setminus V)$ with $1 \le |V| < Cn/d$. After one round of CR, $[n] \setminus V$ gets refined into colour classes each of which consists of vertices with the same number of neighbours in $V$. For $i \in \{0\} \cup \mathbb{N}$, let $W_i \subseteq [n] \setminus V$ be the set of vertices in $[n] \setminus V$ with precisely $i$ neighbours in $V$.

If $|V| \le n/2d$, then we can simply take $U_1 = \bigcup_{i \ge 1} W_i$ (vertices in $[n] \setminus V$ with at least one neighbour in $V$) and $U_2 = [n] \setminus U_1$. Trivially, since the graph is $d$-regular, $|U_1| \le d |V| \le d \cdot (n/2d) = n/2$. On the other hand, since $d \ge n^{1/2+\varepsilon}$, $|U_1| \ge d - |V| \ge d - n / 2d = (1+o(1)) d \ge Cn/d$. Hence, the partition $[n]=U_1 \sqcup U_2$ satisfies the desired property.

To deal with sets $V$ satisfying $n / 2d < |V| < Cn/d$, we will take $U_1 = \bigcup_{i \ge t} W_i$ for some threshold value $t$ that will depend on $|V|$. The value of $t$ will be carefully selected so that $U_1$ has size at most $n/2$ but at least $\varepsilon n$ for some $\varepsilon > 0$. To bound the size of $U_1$, we will use the ``sandwich theorem'' (\cref{thm:sandwich}). Let $p_- = 0.9 d/n$ and $p_+ = 1.1 d/n$. The ``sandwich theorem'' implies that there exists a coupling between two copies of the binomial random graph and a random $d$-regular graph such that whp $\mathcal{G}(n,p_-) \subseteq \mathcal{G}_{n,d} \subseteq \mathcal{G}(n,p_+)$. Since we aim for a statement that holds whp, we may use this coupling to get the desired bounds. 

For a given set $V \subseteq [n]$ of size $v$ with $n/2d < v < Cn/d$, let $X_+$ be the number of vertices in $[n] \setminus V$ in $\mathcal{G}(n,p_+)$ that are adjacent to at least $t := 4 dv/n$ vertices in $V$, noting that $X_+ \geq |U_1|$. For a given vertex $y \in [n] \setminus V$, the number of neighbours of $y$ in $V$ is $Y \sim \textrm{Bin}(v, p_+)$ with the expectation equal to $\lambda = 1.1dv/n = \Theta(1)$; note that $0.55 < \lambda < 1.1C$. Using \eqref{eq:chern_lb} we get that
$$
q_+ := \mathbb{P} \left( Y \ge t \right) = \mathbb{P} \left( Y \ge \frac {4 \lambda}{1.1} \right) \le \exp \left( - \frac {(4/1.1-1)^2}{2(1+(4/1.1-1)/3)} \lambda \right) < e^{-1.84 \lambda} < e^{-1}.
$$
On the other hand, since $\lambda = \Theta(1)$, $Y$ tends to a Poisson random variable with parameter $\lambda$ in distribution. As a result, $q_+  > \xi_+$ for some universal constant $\xi_+ > 0$ and so $q_+ = \Theta(1)$. 

Coming back to $X_+$, let us note that $X_+ \sim \textrm{Bin}(n-v, q_+)$ with $\mathbb{E} X_+ = (n-v)q_+ = (1+o(1)) nq_+ = \Theta(n)$ (here is the place where we use the fact that $q_+ > \xi_+ > 0$). It follows immediately from Chernoff's bound~\eqref{eq:chern_lb} that
\begin{eqnarray*}
\mathbb{P} \left( X_+ \ge \frac {n}{2} \right) &\le& \exp \left( - \Theta(n) \right). 
\end{eqnarray*}
By the union bound over 
$$
\sum_{n/2d < v < Cn/d} \binom{n}{v} = O \left( \frac {n}{d} \right) \ \left( \frac {en}{Cn/d} \right)^{Cn/d} = O \left( \frac {n}{d} \right) \ \exp \left( \Theta( n \log d / d ) \right) = \exp \left( O ( n^{1/2-\varepsilon} \log n  ) \right)
$$ 
choices for $V$, we get the desired universal upper bound of $n/2$ for all $X_+=X_+(V)$ in $\mathcal{G}(n,p_+)$ that holds whp. Since $\mathcal{G}_{n,d} \subseteq \mathcal{G}(n,p_+)$ whp, this property also holds whp in $\mathcal{G}_{n,d}$ for all $U_1=U_1(V)$.

A symmetric argument can be used to show a lower bound for $|U_1|$. For a given set $V \subseteq [n]$ of size $v$, $n/2d < v < Cn/d$, let $X_- \leq |U_1|$ be the number of vertices in $[n] \setminus V$ in $\mathcal{G}(n,p_-)$ that are adjacent to at least $t = 4 dv/n$ vertices in $V$. Arguing as before, we get that $X_- \sim \textrm{Bin}(n-v, q_-)$ for some $q_- > \xi_- >0$. As a result, $\mathbb{E} X_- = (n-v)q_- = (1+o(1)) nq_- = \Theta(n)$ and we get concentration with exponentially small failure probability. The union bound over all choices of $V$ gives us the desired universal lower bound of, say, $n \xi_- / 2 \ge C n/d$ for $X_-$ in $\mathcal{G}(n,p_-)$ for all $V$ that holds whp. This finishes the proof of the claim, since $\mathcal{G}(n,p_-) \subseteq \mathcal{G}_{n,d}$ whp so this lower bound also holds whp in $\mathcal{G}_{n,d}$.
\end{proof}

The proof of \cref{cl:intermediate_large} is more complex. Let us start with the following simple observation. 

\begin{lemma}\label{lem:bound_on_k}
Let $\varepsilon\in(0,1/2)$, $n^{1/2+\varepsilon} \leq d=d(n)\leq n/2$, and let $\mathbf{G}_n \sim \mathcal{G}_{n,d}$. 
For any fixed $\delta > 0$, the following property holds whp: for any $U \subseteq [n]$ of size $u = |U| > \delta n$ and any $V \subseteq [n] \setminus U$ of size $v = |V| \ge 500 n / d \delta$, the number of edges $e(U,V)$ between $U$ and $V$ satisfies the following bounds
$$
0.8 u v \cdot \frac {d}{n-1} \le e(U,V) \le 1.2 u v \cdot \frac {d}{n-1} \,.
$$
\end{lemma}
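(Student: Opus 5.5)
The plan is to derive the lemma directly from the Expander Mixing Lemma combined with the spectral bound of \cref{th:lambda}. Since we only need a statement that holds whp, we condition on the event of \cref{th:lambda}, namely $\lambda=\lambda(\mathbf{G}_n)\le (2+o(1))\sqrt{d(1-d/n)}\le 3\sqrt d$ for $n$ large. On this event the conclusion will hold deterministically, simultaneously for all admissible pairs $(U,V)$. This is why no union bound over the exponentially many choices of $U$ and $V$ is needed.

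First, because $U$ and $V$ are disjoint, $e(U,V)$ is simply the number of edges between them. The Expander Mixing Lemma then gives
\[
\left| e(U,V) - \frac{d uv}{n}\right| \le \lambda \sqrt{uv} \le 3\sqrt{d uv}.
\]
It is therefore enough to show that $3\sqrt{duv}$ is at most a small constant fraction of $duv/n$, say at most $0.19\, duv/n$. Rearranged, this is the condition $uv \ge (3/0.19)^2 n^2/d \approx 250\, n^2/d$. By the hypotheses,
\[
uv > \delta n\cdot \frac{500n}{d\delta} = \frac{500 n^2}{d},
\]
so the condition holds with room to spare. Hence $e(U,V)\in[0.81,1.19]\cdot duv/n$.

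Finally, we pass from $d/n$ to $d/(n-1)$. Since $d/n = (1-1/n)\, d/(n-1)$, the factors $0.81(1-1/n)\ge 0.8$ and $1.19\le 1.2$ give exactly the stated bounds for all large $n$. Note that the assumptions $d\ge n^{1/2+\varepsilon}$ and $d\le n/2$ are not really used, beyond ensuring that \cref{th:lambda} applies, that $v\le n$ is consistent with $v\ge 500n/(d\delta)$, and that the event is nonvacuous.

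There is no genuine obstacle here. The only point needing care is the constant check, namely that $3\sqrt{duv}\le 0.19\,duv/n$ follows from $uv\ge 500n^2/d$. This is where the constant $500$ comes from, and it holds comfortably since $\sqrt{250}<16<\sqrt{500}$.
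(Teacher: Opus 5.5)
Your argument is correct, but it is not the route of the paper's written proof. It is the alternative the paper records in the remark immediately after the lemma.

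The paper instead uses the sandwich coupling of \cref{thm:sandwich} with $p_\pm=(1\pm 0.1)\,d/(n-1)$. For each fixed pair $(U,V)$, $e(U,V)$ in $\mathcal{G}(n,p_\pm)$ is binomial with mean $(1\pm 0.1)\,uvd/(n-1)$. Chernoff then bounds the failure probability by $\exp(-uvd/(250n))\le e^{-2n}$; this is where $uvd>500n^2$, and hence the constant $500$, enters. A union bound over the at most $4^n$ pairs follows. Monotonicity of $e(U,V)$ under $\mathcal{G}(n,p_-)\subseteq\mathbf{G}_n\subseteq\mathcal{G}(n,p_+)$ transfers the bounds to $\mathbf{G}_n$.

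Your spectral route needs no union bound, because on the eigenvalue event of \cref{th:lambda} the estimate holds deterministically for all pairs at once. It also gives a sharper additive error $O(\sqrt{duv})$ and would survive a weaker size condition on $uv$. The paper's route reuses the coupling-plus-Chernoff template it applies in the neighbouring \cref{cl:intermediate_small} and \cref{lem:neighbourhood}, which keeps the dense-case section uniform. Both approaches ultimately rest on a deep external input: the sandwich theorem on one side, the spectral gap of random regular graphs on the other.
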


\begin{proof}
Let $p_- = 0.9 d / (n-1)$ and $p_+ = 1.1 d / (n-1)$. The ``sandwich theorem'' (\cref{thm:sandwich}) implies that there exists a coupling between two copies of the binomial random graph and a random $d$-regular graph such that whp $\mathcal{G}(n,p_-) \subseteq \mathcal{G}_{n,d} \subseteq \mathcal{G}(n,p_+)$. Since we aim for a statement that holds whp, we may use this coupling to get the desired bounds.

Fix any $U \subseteq [n]$ of size $u = |U| > \delta n$ and any $V \subseteq [n] \setminus U$ of size $v = |V| \ge 500 n/d \delta$. In particular, note that $uvd > 500 n^2$. Let $X_-$ be the number of edges between $U$ and $V$ in $\mathcal{G}(n,p_-)$. Clearly, $X_- \sim \textrm{Bin}(uv,p_-)$ with $\mathbb{E} X_- = 0.9 uv d/(n-1)$. It follows immediately from Chernoff's bound~\eqref{eq:chern_ub} that
\begin{eqnarray*}
\mathbb{P} \left( X_- \le 0.8 uv \cdot \frac {d}{n-1} \right) &\le& \exp \left( - \frac {0.1^2}{2 \cdot 0.9} \, uv \cdot \frac {d}{n-1} \right) ~\le~ \exp \left( - \frac {uvd}{250 n} \right) ~\le~ e^{-2n}.
\end{eqnarray*}
By the union bound over at most $(2^{n})^2$ choices for $U$ and $V$, we get the desired lower bound for $e(U,V)$ (for any pair $U$, $V$) in $\mathcal{G}(n,p_-)$ that holds whp. Since $\mathcal{G}(n,p_-) \subseteq \mathcal{G}_{n,d}$ whp, this property also holds whp in $\mathcal{G}_{n,d}$.

A symmetric argument can be used to get the desired upper bound. Indeed, if $X_+$ is the number of edges between $U$ and $V$ in $\mathcal{G}(n,p_+)$, then $\mathbb{E} X_+ = 1.1 uv d/(n-1)$ and Chernoff's bound~\eqref{eq:chern_lb} gives us that
\begin{eqnarray*}
\mathbb{P} \left( X_+ \ge 1.2 uv \cdot \frac {d}{n-1} \right) &\le& \exp \left( - \frac {0.1^2}{2 (1.1 + 0.1/3)} \, uv \cdot \frac {d}{n-1} \right) ~\le~ \exp \left( - \frac {uvd}{250 n} \right).
\end{eqnarray*}
The upper bound holds whp for $\mathcal{G}(n,p_+)$ and so it holds also whp for $\mathcal{G}_{n,d}$ since whp $\mathcal{G}_{n,d} \subseteq \mathcal{G}(n,p_+)$.
\end{proof}

\begin{remark}
\cref{lem:bound_on_k} also follows from the Expander Mixing Lemma and~\cref{th:lambda}. 
\end{remark}

Now, we can move to the proof of \cref{cl:intermediate_large}.

\begin{proof}[Proof of \cref{cl:intermediate_large}]
Fix any $\delta \in (0,1]$ and let $C = C(\delta)$ be a large enough constant that will be specified later. In particular, we will assume that $C \ge 500 / \delta$ so that we may apply \cref{lem:bound_on_k}. 

Suppose that there exists a partition $[n]=V \sqcup ([n] \setminus V)$ with $Cn/d \le |V| \le n/2$ such that after one round of CR there exists a colour class $U$ of size more than $\delta n$. Note that this implies that every vertex in $U$ has the same number of neighbours in $V$ (hence every vertex in $U$ also has the same number of neighbours in $[n] \setminus V$). If $U \subseteq V$, then it will be convenient to concentrate on the number of neighbours in $[n] \setminus V \subseteq [n] \setminus U$ but if $U \subseteq [n] \setminus V$, then we will concentrate on the number of neighbours in $V \subseteq [n] \setminus U$. Our goal is to estimate the probability of the weaker but necessary property that there exists a pair of sets $(U,V)$ such that $V \subseteq [n] \setminus U$, $|V| \ge C n / d$, $|U| > \delta n$, and every vertex in $U$ has the same number of neighbours in $V$.

Fix $V \subseteq [n]$ and $U \subseteq [n] \setminus V$ such that $v:=|V| \ge C n/d$ and $u:= |U| > \delta n$. Note that, in particular, $v < (1-\delta)n$. For each non-negative integer $k\leq v$, define $\mathcal{E}_k(U,V)$ to be the event that every vertex in $U$ has exactly $k$ neighbours in $V$, in which case the number of edges between $U$ and $V$ is exactly $uk$. By \cref{lem:bound_on_k}, since we aim for a statement that holds whp, we may assume that the number of edges between $U$ and $V$ is at least $0.8 uv d / (n-1)$ and at most $1.2 uv d / (n-1)$. Hence, we may restrict to considering $k$ such that 
\begin{equation}\label{eq:bounds_on_k}
0.8 v \cdot \frac {d}{n-1} ~\le~ k ~\le~ 1.2 v \cdot \frac {d}{n-1}.
\end{equation}

Now, note that the expected number of edges induced by $[n] \setminus V$ is $\binom{n-v}{2} \cdot \frac {d}{n-1}$. We will show that it is highly unlikely that the actual number deviates substantially from it. Let 
$$
m_- = 0.9 \binom{n-v}{2} \cdot \frac {d}{n-1} \qquad \text{ and } \qquad m_+ = 1.1 \binom{n-v}{2} \cdot \frac {d}{n-1}. 
$$
By \cref{thm:newcount} and the Stirling's formula ($s! = (1+o(1)) \sqrt{2 \pi s} (s/e)^s$), letting $\mathbf{d}:=(d,\ldots,d)\in\mathbb{Z}^n$, the number of $d$-regular graphs on $[n]$ can be estimated as follows:
\begin{eqnarray}
g(\mathbf{d}) = \Theta \left( \frac{{n-1\choose d}^n}{ \sqrt{dn} {{\binom{n}{2}}\choose dn/2}} \right) &=& \left( (1+o(1)) \sqrt{ \frac {n}{2\pi d (n-d)} } \cdot \frac{ \left( \frac {n-1}{e} \right)^{n-1} } { \left( \frac {d}{e} \right)^{d} \left( \frac {n-1-d}{e} \right)^{n-1-d} } \right)^n \label{eq:Gnd} \\
&& \qquad \cdot \left( \frac{ \Theta(1) \left( \frac {n(n-1)/2}{e} \right)^{n(n-1)/2} } { \left( \frac {dn/2}{e} \right)^{dn/2} \left( \frac {n(n-1-d)/2}{e} \right)^{n(n-1-d)/2} } \right)^{-1} \nonumber \\
&=& \Theta(1) \cdot {{\binom{n}{2}}\choose dn/2} \left( (1+o(1)) \sqrt{ \frac {n}{2\pi d (n-d)} } \right)^n = {{\binom{n}{2}}\choose dn/2} d^{-\Theta(n)}. \nonumber 
\end{eqnarray}
Hence, the probability that the number of edges induced by $[n]\setminus V$ is at most $m_-$ or at least $m_+$ can be upper bounded by
\begin{align*}
\sum_{\substack{m\leq m_- \\ m \ge m_+}} \frac {{{n-v\choose 2}\choose m}{{\binom{n}{2}}-{n-v\choose 2}\choose dn/2-m}}{g(\mathbf{d})}
& = n^{\Theta(n)} \sum_{\substack{m\leq m_- \\ m \ge m_+}} \frac{{{n-v\choose 2}\choose m}{{\binom{n}{2}}-{n-v\choose 2}\choose dn/2-m}}{{{\binom{n}{2}}\choose dn/2}}\\
&=
n^{\Theta(n)}\cdot\mathbb{P} \left( \eta\leq m_- \text{ or } \eta \ge m_+ \right),
\end{align*}
where $\eta$ is the hypergeometric random variable with parameters $\binom{n}{2}$, $\binom{n-v}{2}$, and $dn/2$. Clearly, 
$$
 \mathbb{E}\eta = \frac {dn}{2} \cdot \frac { \binom{n-v}{2} }{ \binom{n}{2} } = {n-v\choose 2}\frac{d}{n-1} = \Theta(dn).
$$
By Chernoff's bound for the hypergeometric distribution (see, \eqref{eq:chern_lb}, \eqref{eq:chern_ub}, and~\cref{rk:hypergeometric}),
$$
 \mathbb{P}(\eta\leq m_- \text{ or } \eta \ge m_+) = \mathbb{P} \left( |\eta - \mathbb{E}\eta| \ge 0.1 \mathbb{E}\eta \right) = \exp\left(-\Omega\left( \mathbb{E}\eta \right)\right)=\exp(-\Omega(dn)).
$$

Similarly, if $|V| \ge n^{3/4}$, then the expected number of edges induced by $V$ is $\binom{v}{2} \cdot \frac {d}{n-1} = \Theta( v^2 d / n)$ and we get that with probability $\exp(-\Omega(dv^2/n))=\exp(-\Omega(d n^{1/2}))$, the number of edges induced by $V$ is at most $0.9 \binom{v}{2} \cdot \frac {d}{n-1}$ or at least $1.1 \binom{v}{2} \cdot \frac {d}{n-1}$.

It remains to concentrate on the case when the number of edges induced by $[n]\setminus V$ is at least $m_-$ but at most $m_+$, that is, when the average degree of the graph induced by $[n]\setminus V$ is at least $0.9 (n-1-v) \cdot \frac {d}{n-1}$ but at most $1.1 (n-1-v) \cdot \frac {d}{n-1}$. Let us first deal with the case when $|V| \ge n^{3/4}$ so we may additionally assume that the average degree of the graph induced by $V$ is at least $0.9 (v-1) \cdot \frac {d}{n-1}$ but at most $1.1 (v-1) \cdot \frac {d}{n-1}$.

By \cref{thm:newcount} and \cref{lm:even},
\begin{equation}
 \mathbb{P}(\mathcal{E}_k(U,V)) = e^{-\Omega(dn)} +e^{-\Omega(d n^{1/2})} + O\left(\sum_{D} h(k,u,v,D) \right),
\label{eq:U-V-count}
\end{equation}
where $D$ denotes the number of edges between $V$ and $[n] \setminus (V \cup U)$, and
$$
h(k,u,v,D) := \frac{{v\choose k}^u{(n-v-u)v\choose D} \binom{v-1}{d-(D+ku)/v}^v {n-1-v\choose d-(D+ku)/(n-v)}^{n-v}}{ \binom{{v \choose 2}}{(dv-ku-D)/2} {{n-v\choose 2}\choose (d(n-v)-ku-D)/2} g(\mathbf{d}) } \,.
$$
Indeed, there are at most ${v\choose k}^u$ ways to place edges between $U$ and $V$, and at most ${(n-v-u)v\choose D}$ ways to place edges between $V$ and $[n] \setminus (V \cup U)$. (Note that these values are trivial upper bounds but not the exact ones as some choices create vertices of degree more than $d$.) It remains to estimate the number of graphs induced by the set $[n] \setminus V$ and the number of graphs induced by the set $V$. Importantly, once other edges are fixed, these graphs have a fixed degree distribution. In particular, the average degree of the graphs induced by $[n] \setminus V$ is precisely $f(D) := d-(D+ku)/(n-v)$. Similarly, the average degree of the graphs induced by $V$ is $\tilde f(D) := d-(D+ku)/v$. Hence, we may use \cref{thm:newcount} and \cref{lm:even} to get upper bounds for the number of such graphs, which in turn imply~\eqref{eq:U-V-count}. (Let us point out that $f(D)$ and $\tilde f(D)$ are not necessarily integers. However, to keep the notation simple, we write ${n-1-v\choose f(D)}^{n-v}$ instead of the product of $n-v$ terms, each of them being ${n-1-v\choose \lfloor f(D) \rfloor}$ or ${n-1-v\choose \lceil f(D) \rceil}$.) Finally, since the average degree of the graph induced by $[n]\setminus V$ and the one induced by $V$ are restricted, $D$ satisfies the requirements 
\begin{eqnarray}
0.9 (n-1-v) \cdot \frac {d}{n-1} \le &f(D)& \le 1.1 (n-1-v) \cdot \frac {d}{n-1} \label{eq:bounds_on_D1}, \\
0.9 (v-1) \cdot \frac {d}{n-1} \le & \tilde f(D)& \le 1.1 (v-1) \cdot \frac {d}{n-1}. \label{eq:bounds_on_D2}
\end{eqnarray}

There are three binomials in the numerator of $h(k,u,v,D)$ that are raised to powers that are functions of $n$. We need to take advantage of them using \cref{cor:binomial_product} (see~\eqref{eq:hyp2}). By \eqref{eq:bounds_on_D1} and recalling $v<(1-\delta)n$, 
\begin{eqnarray}
{n-1-v\choose f(D)}^{n-v} &\le& \left( \frac {(1+o(1))(n-1-v)}{f(D) (n-1-v-f(D))} \right)^{(n-v-1)/2} \binom{(n-v)(n-v-1)}{d(n-v)-ku-D} \nonumber\\
&\le& \left( \frac {1+o(1)}{0.9 \cdot \delta \cdot d \cdot (1-1.1 \cdot 0.5)} \right)^{(n-v-1)/2} \binom{(n-v)(n-v-1)}{d(n-v)-ku-D} \nonumber\\
&\le& \left( \frac {3}{\delta d} \right)^{(n-v-1)/2} \binom{(n-v)(n-v-1)}{d(n-v)-ku-D}.\label{eq:extra_D1}
\end{eqnarray}
Similarly, by \eqref{eq:bounds_on_D2}, 
\begin{eqnarray}
\binom{v-1}{\tilde f(D)}^v &\le& \left( \frac {(1+o(1))(v-1)}{\tilde f(D) (v-1-\tilde f(D))} \right)^{(v-1)/2} \binom{v(v-1)}{dv-ku-D} \nonumber \\
&\le& \left( \frac {1+o(1)}{0.9 \cdot (vd/n) \cdot (1-1.1 \cdot 0.5)} \right)^{(v-1)/2} \binom{v(v-1)}{dv-ku-D} \nonumber \\
&\le& \left( \frac {3n}{vd} \right)^{(v-1)/2} \binom{v(v-1)}{dv-ku-D}. \label{eq:extra_D2}
\end{eqnarray}
Finally, by \eqref{eq:bounds_on_k}, 
\begin{eqnarray}
\binom{v}{k}^u &\le& \left( \frac {v}{k (v-k)} \right)^{(u-1)/2} \binom{vu}{ku} \nonumber\\
&\le& \left( \frac {1+o(1)}{ 0.8 vd/n (1-1.2 \cdot 0.5)} \right)^{(u-1)/2} \binom{vu}{ku} \nonumber\\
&\le& \left( \frac {4n}{dv} \right)^{(u-1)/2} \binom{vu}{ku}.\label{eq:extra_D3}
\end{eqnarray}

For future reference, let us highlight that \eqref{eq:extra_D2} only holds when $v \ge n^{3/4}$, whilst the other two bounds \eqref{eq:extra_D1} and \eqref{eq:extra_D3} hold in general. Substituting in these three bounds, we get
$$
h(k,u,v,D) = O\left( n^{3/2} \ \frac{ \left( \frac {4n}{dv} \right)^{u/2} \binom{vu}{ku} {(n-v-u)v\choose D} \left( \frac {3n}{vd} \right)^{v/2} \binom{v(v-1)}{dv-ku-D} \left( \frac {3}{\delta d} \right)^{(n-v)/2} \binom{(n-v)(n-v-1)}{d(n-v)-ku-D} }{ \binom{{v \choose 2}}{(dv-ku-D)/2} {{n-v\choose 2}\choose (d(n-v)-ku-D)/2} g(\mathbf{d}) } \right).
$$
Now, by \cref{cor:binomial_product} (see~\eqref{eq:hyp3}),
\begin{multline*}
h(k,u,v,D) \\= O\left( n^{7/2} \ \frac{ \left( \frac {4n}{dv} \right)^{u/2} \binom{vu}{ku} {(n-v-u)v\choose D} \left( \frac {3n}{vd} \right)^{v/2} \binom{{v \choose 2}}{(dv-ku-D)/2} \left( \frac {3}{\delta d} \right)^{(n-v)/2} {{n-v\choose 2}\choose (d(n-v)-ku-D)/2} }{ g(\mathbf{d}) } \right).
\end{multline*}
By \cref{cor:binomial_product} (see~\eqref{eq:hyp0}), we can collect all binomial coefficients together to get
\begin{eqnarray*}
h(k,u,v,D) &=& O\left( n^{7/2} \ \frac{ \left( \frac {4n}{dv} \right)^{u/2} \binom {vu+(n-v-u)v+{v \choose 2}+{n-v\choose 2}} {ku+D+(dv-ku-D)/2+(d(n-v)-ku-D)/2} \left( \frac {3n}{vd} \right)^{v/2} \left( \frac {3}{\delta d} \right)^{(n-v)/2} }{ g(\mathbf{d}) } \right) \\
&=& O\left( n^{7/2} \ \frac{ \left( \frac {4n}{dv} \right)^{u/2} \binom {{n \choose 2}} {dn/2} \left( \frac {3n}{vd} \right)^{v/2} \left( \frac {3}{\delta d} \right)^{(n-v)/2} }{ g(\mathbf{d}) } \right).
\end{eqnarray*}
Using \eqref{eq:Gnd} we get that 
$$
g(\mathbf{d}) \ge \binom {{n \choose 2}} {dn/2} ( (2+o(1)) \pi d)^{-n/2} \ge \binom {{n \choose 2}} {dn/2} (7d)^{-n/2},
$$
and so 
\begin{eqnarray*}
h(k,u,v,D) &=& O\left( n^{7/2} \ \left( \frac {4n}{dv} \right)^{u/2} \left( \frac {3n}{vd} \right)^{v/2} \left( \frac {3}{\delta d} \right)^{(n-v)/2} ( 7 d)^{n/2} \right) \\
&=& O\left( n^{7/2} \ \left( \frac {4n}{dv} \right)^{u/2} \left( \frac {\delta n}{v} \right)^{v/2} \left( \frac { 21 }{\delta} \right)^{n/2} \right) \,.
\end{eqnarray*}
Now, let $h(v) := (\delta n / v)^{v/2}$ and note that $\frac{\partial}{\partial v}\log h = \frac {1}{2} \log\frac{\delta n}{ev}$. Hence, $h(v)$ is maximized for $v = \delta n / e$ and we get that for any positive integer $v$,
$$
\left( \frac {\delta n}{v} \right)^{v/2} \le \max_{v} h(v) = \exp \left( \frac {\delta n}{2e} \right) \le 2^{n/2} \,,
$$
since $\exp(1/e) \approx 1.445 \le 2$ and $\delta\leq 1$. Since $u>\delta n$ and $v\geq Cn/d$, it follows that
\begin{eqnarray*}
h(k,u,v,D) &=& O\left( n^{7/2} \ \left( \frac {4n}{dv} \right)^{u/2} \left( \frac { 42 }{\delta} \right)^{n/2} \right) \\
&=& O\left( n^{7/2} \ \left( \left( \frac {4}{C} \right)^{\delta} \cdot \frac { 42 }{\delta} \right)^{n/2} \right) = O \left( 5^{-n} \right),
\end{eqnarray*}
provided that $C$ is large enough so that, say, 
$\left( \frac {4}{C} \right)^{\delta} \cdot \frac {42}{\delta} \le 42 / 1200 = 0.035 < 0.04 = 1/5^2$.
Since $\delta$ is fixed, this condition can be easily satisfied and we may now finally define the constant~$C$:
$$
C = C(\delta) := \max \left\{ \frac {4}{(\delta / 1200)^{1/\delta}}, \frac {500}{ \delta } \right\}.
$$

We conclude that if $|V| \ge n^{3/4}$, then
$$
 \mathbb{P}(\mathcal{E}_k(U,V)) = e^{-\Omega(dn)} + e^{-\Omega(d n^{1/2})} + O\left(\sum_{D} h(k,u,v,D) \right) = O \left( n^2 \, 5^{-n} \right).
$$
If $v < n^{3/4}$ then, as mentioned earlier, we do not get the term $\left( \frac {3n}{vd} \right)^{(v-1)/2}$ in the estimation of $h(k,u,v,D)$ (see \eqref{eq:extra_D2}). However, for $v < n^{3/4}$, this term does not help us much anyway: $\left( \frac {3n}{vd} \right)^{(v-1)/2} = \exp( - \Omega( v \log v ) ) \ge \exp( - n^{4/5} )$. Hence, regardless of the size of $V$,
$$
 \mathbb{P}(\mathcal{E}_k(U,V)) = O \left( n^2 \exp(n^{4/5}) \, 5^{-n} \right).
$$
Finally, by the union bound,
$$
 \mathbb{P} \left( \exists k,U,V \,\,\mathcal{E}_k(U,V) \right) \leq n \cdot 2^{n} \cdot 2^n \cdot O \left( n^2 \exp(n^{4/5}) \, 5^{-n} \right) = o(1),
$$
which finishes the proof of the theorem. 
\end{proof}

\section{Proof of \cref{thm:main_many_large_set}: Sparse Case}
\label{sc:sparse-many-large}

Sparser graphs clearly require more rounds of CR. Consider any $d$-regular graph with diameter $D$ and let $u$ and $v$ be any two vertices at distance $D$ from each other. CR run on the initial partition $V_1=\{v\}$ and $V_2 = [n] \setminus \{v\}$ requires at least $D-1$ rounds to converge to a discrete colouring. Indeed, after $D-2$ rounds there are at least two vertices at distance at least $D-1$ from $v$ that are still of the same colour. In this section, we prove the following.


\begin{theorem}\label{thm:sparse_many_large_set}
Let $d_0$ be large enough, let $d_0\leq d =o(n)$, and let $\mathbf{G}_n \sim \mathcal{G}_{n,d}$. Let $k\in\mathbb{N}$ be an arbitrary constant. Then, the following holds whp: for every non-trivial partition $[n]=V_1\sqcup V_2$ of the vertex set of $\mathbf{G}_n$, after at most $\diam (\mathbf{G}_n)+2$ rounds of CR, there exists a partition $[n] = V_1 \sqcup \ldots \sqcup V_k$ such that for any $i \in [k]$, $n/3k \le |V_i| \le 3n/k$ and $V_i$ is a union of some colour classes. 
\end{theorem}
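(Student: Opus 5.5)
The plan follows the three-step strategy announced in the proof-strategy paragraph. Throughout, we coarsen the CR partition freely, since any coarsening is refined by the true CR partition. Fix a small constant $c>0$ and a large constant $\ell$, and condition on the whp conclusions of \cref{lm:common_expansion_Thm3}, \cref{th:lambda} and the lemmas below. Let $[n]=V_1\sqcup V_2$ be non-trivial with $|V_1|\le|V_2|$. We proceed in three stages: (i) reach a union of colour classes $U$ with $|U|\in[cn/d,cn]$; (ii) in one more round, produce a union of colour classes $U'$ with $|U'|\in[\ell n/d,0.999n]$; (iii) in one further round, show that no colour class has size more than $\delta n$. We then merge classes exactly as in the proof of \cref{thm:dense_two_rounds}, with $\delta=1/(3k)$, to get $k$ parts of size in $[n/3k,3n/k]$.

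\emph{Stage (i).} Suppose first that $|V_1|<cn/d$. At round $r$, the set $B_r(V_1)$ is a union of colour classes, because a vertex's distance to $V_1$ is encoded in its CR colour after that many rounds. Let $r$ be the largest integer with $|V_1|d(d-1)^{r-1}\le cn$. \cref{lm:common_expansion_Thm3} gives $|S_r(V_1)|\ge(1-100c-4\ln d/d)|V_1|d(d-1)^{r-1}$, which is at least $cn/(2d)$ by maximality of $r$. Trivially $|B_r(V_1)|\le 2|V_1|d(d-1)^{r-1}\le 2cn$. Taking $U=B_r(V_1)$ and rescaling $c$ gives the required size, and $r\le\diam(\mathbf{G}_n)$.

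Suppose instead that $cn/d\le|V_1|\le cn$. Then $U=V_1$ works with no rounds.

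Suppose finally that $|V_1|>cn$. Then both parts are linear. Since $\lambda\le 3\sqrt d$, the expander mixing lemma makes the number of neighbours in $V_1$ concentrated around $d|V_1|/n$ for most vertices. After one round, the classes defined by thresholding $N_{V_1}(x)$ give a coarsening with a part of size in $[\ell n/d,0.999n]$ directly, so we jump to stage (iii). If instead some single class already has size above $0.999n$, we apply the argument of stage (iii) with $V_1$ in place of $U'$.

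\emph{Stage (ii)} is \cref{lem:exactly_s_neighbours}. Given $U$ with $|U|\in[cn/d,cn]$, vertices outside $U$ have $\mathrm{Bin}$-like numbers of neighbours in $U$, with mean $\mu=d|U|/n\in[c,cd]$. Choose a threshold $s$ and take $U'=\{x:N_U(x)\ge s\}$ (or the set with exactly $s$ neighbours). We need $|U'|$ to lie whp in $[\ell n/d,0.999n]$ simultaneously for all $U$. The union bound over $\binom{n}{cn}$ choices of $U$ requires failure probabilities $e^{-\Omega(n)}$. We would obtain these via switchings, showing that the count of vertices with a given value of $N_U$ is concentrated. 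Concretely, we condition on $\mathbf{G}_n[U]$ and apply \cref{cl:H-inclusion} or \cref{cl:regular-probability-bound} to bound configurations in which $U'$ is too small or too large.

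\emph{Stage (iii)} is \cref{lm:from_single_set-to_partition-gen}, and it is the main obstacle. We must show that whp there are no $U'$ with $|U'|\ge\ell n/d$ and $W\subseteq[n]\setminus U'$ with $|W|>\delta n$ such that every vertex of $W$ has the same number of neighbours in $U'$. This is an Erd\H{o}s--Littlewood--Offord statement for random regular graphs. The plan is a switching argument. Fix the edges outside $E(W,U')$ and perform switchings that exchange edges $\{w,a\},\{w',b\}$ for $\{w,b\},\{w',a\}$, with $a\in U'$ and $b\notin U'$; each such switching changes $N_{U'}(w)$ by one. Each vertex $w\in W$ has about $d|U'|/n\ge\ell$ available switch partners. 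A graph in the bad event can therefore be mapped to many graphs outside it, which gives a per-vertex factor of $O(1/\sqrt{\ell})$. Multiplying over a linear number of essentially independent vertices of $W$ yields probability $(C/\sqrt\ell)^{\Omega(\delta n)}$, and choosing $\ell=\ell(\delta)$ large overcomes the $4^n$ union bound over $(U',W)$.

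The technical difficulty is making the switchings independent across vertices. We would handle this by restricting to a matching of switch pairs and using double counting, in the spirit of the proof of \cref{lm:even}. Altogether the number of rounds is at most $\diam(\mathbf{G}_n)+2$.
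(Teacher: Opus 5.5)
Your three-stage outline is the one the paper follows. However, the two probabilistic inputs are not supplied, and the way you propose to obtain them would not work. In stage (ii) you plan to show, uniformly over all $U$ with $|U|\in[cn/d,cn]$, that the number of vertices with a given value of $N_U$ is concentrated, via \cref{cl:H-inclusion} or \cref{cl:regular-probability-bound}. Those bounds only control the probability of containing a prescribed edge set, i.e.\ upper tails. What actually blocks the choice of a threshold is a single class $W_s=\{x\notin U:N_U(x)=s\}$, $s\ge1$, containing almost every vertex. For $|U|=n/d$ and $s=1$ this says $N(U)$ has almost no collisions, which is an anti-concentration statement that containment bounds cannot see. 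Concretely, the first-moment bound from \cref{cl:H-inclusion}, summed over star forests from $U$ onto a set $W_1$ of $0.999n$ vertices, is of order $\binom{n}{0.001n}$, not $o(1)$. The paper does not prove concentration at all. It skips stage (ii) when $|U|\ge\ell n/d$, so your $\binom{n}{cn}$ union bound never arises. When $|U|\le n/(2d)$ it takes $S_1(U)$ and uses \cref{lm:common_expansion_Thm3}. For $|U|\in[n/(2d),\ell n/d]$ it double counts edges to get $|\{x:N_U(x)>\ell\}|\le\ell n/(\ell+1)$. After that, a union of the classes $W_0,\ldots,W_\ell$ of suitable size exists unless one $W_s$ has at least $n/2$ vertices. \cref{lem:exactly_s_neighbours} then caps such $|W_s|$ at $0.999n$, using a switching that hands an extra $U$-neighbour to $x_1,x_2,\ldots$ in turn and gains a factor $4/3$ per step.

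Stage (iii) has two further problems. First, your lemma only constrains classes in $[n]\setminus U'$, and nothing you say splits the classes inside $U'$ in the same round. That would need a two-sided version: the reference set becomes $[n]\setminus U'$ and $W\subseteq U'$ may fill most of $U'$, so the room outside $W$ and the reference set, which your switching needs, can vanish. The paper instead spends an extra round. With $|U|\le n/2$ (pass to the complement otherwise), once $[n]\setminus U$ is split into classes of size at most $10n/\ln\ell=n/3k$, it merges some of them into a set of size in $[n/3k,2n/3k]\subseteq[\ell n/d,n/2]$. It then applies \cref{lm:from_single_set-to_partition-gen} again to split $U$. The budget of $\diam(\mathbf{G}_n)+2$ rounds then requires $r\le\diam(\mathbf{G}_n)-1$ in stage (i), which holds since $|B_r(V_1)|<n$; your bound $r\le\diam(\mathbf{G}_n)$ is not enough. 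Second, your switching plan is inconsistent as stated. Fixing all edges outside $E(W,U')$ already determines every $N_{U'}(w)$ by $d$-regularity, and your switch adds $\{w,b\}$ with $b\notin U'$, which is one of those fixed edges. The paper proceeds as follows:
\begin{itemize}
\item it exposes only $\mathbf{G}_n[V]$ and keeps the vertices with at least $0.8d$ edges leaving $V$;
\item it uses $\{x_1,y\},\{u,v\}\mapsto\{x_1,u\},\{y,v\}$ with $u\in U$ and $y,v\notin U\cup V$, so no other vertex of $V$ changes, and it tracks $h_{in},h_{out}$;
\item it moves $\delta$ up or down according to the sign of $(d_1-s)h_{out}-sh_{in}$ to exhibit about $\ell^{0.3}$ comparable classes, and discards the rare graphs in $\mathcal{X}$;
\item it gets the independence you worry about by treating $x_1,x_2,\ldots$ sequentially, requiring only $|N_U(x_j)-s|\le\ell^{0.3}$ for already processed vertices, which yields a factor $\ell^{-0.2}$ per vertex.
\end{itemize}
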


We start from auxiliary anti-concentration results that will be used to prove \cref{thm:sparse_many_large_set} in~\cref{sc:proof-thm-2-many-sparse}.

\subsection{Anti-concentration Results}

\begin{lemma}
\label{lm:from_single_set-to_partition-gen}
Let $\ell$ be a large enough constant, and let $d_0 = d_0(\ell)$ be another large enough constant. Let $d_0\leq d=o(n)$ and $\mathbf{G}_n\sim\mathcal{G}_{n,d}$. Then, the following property holds whp: for every set $U$ of size $|U|\in[\frac{n\ell}{d},\frac{n}{2}]$ and every non-negative integer $s$, the number of vertices in $[n]\setminus U$ that have exactly $s$ neighbours in $U$ is at most $10n/\ln \ell$.
\end{lemma}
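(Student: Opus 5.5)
Fix $U$ with $u:=|U|\in[n\ell/d,n/2]$ and $s\ge 0$, and let $W\subseteq[n]\setminus U$ be a set of $w:=\lceil 10n/\ln\ell\rceil$ vertices. We bound the probability of the bad event $\mathcal{B}(U,W,s)$ that every vertex of $W$ has exactly $s$ neighbours in $U$. We then take a union bound over at most $n\cdot 4^n$ triples, so we need $\mathbb{P}(\mathcal{B})\le 5^{-n}$, say. Since $w\ge 10n/\ln\ell$, it suffices to gain a factor $(C/\sqrt{\ell})^{\,w}$ or so, because
\[
\ell^{-w/2}\le \exp\left(-\tfrac{10n}{\ln \ell}\cdot\tfrac{\ln\ell}{2}\right)=e^{-5n}.
\]
So the target is a Littlewood--Offord type bound: each vertex of $W$ contributes a factor of order $(du/n)^{-1/2}\le\ell^{-1/2}$, the probability that a $\mathrm{Bin}(u,d/n)$-like variable hits a prescribed value. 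As in the dense case there is no local limit theorem, so we use switchings, which is the approach announced in the proof strategy.

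First, by the Expander Mixing Lemma and \cref{th:lambda}, whp every vertex set has the expected edge counts up to $O(\sqrt d)$ error per vertex on average. So we may restrict to $s$ with $s=\Theta(du/n)\ge\ell/2$; indeed $e(U,W)=sw$ forces $|s-du/n|\le 3n\sqrt{d}/\sqrt{uw}$, which is $O(\sqrt{du/n}\,\sqrt{\ln\ell})$ and therefore $o(du/n)$ when $\ell$ is large. The case $s=0$ is also excluded, since $W$ would be a large set with no edges to $U$.

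The core is a switching argument, run one vertex at a time. Order $W=\{w_1,\dots,w_w\}$, and for a graph $G$ let $x_i(G)=N_U(w_i)$. Condition on everything except the edges at $w_i$ together with a suitable pool of other edges. For $t$ near $s$, compare the classes $\mathcal{C}_t$ of graphs with $x_i=t$ (and $x_j=s$ for $j<i$) via the standard two-edge switching: remove $w_iy$ with $y\in U$ and an edge $ab$ with $a\notin U$, $b$ arbitrary, and add $w_ia$ and $yb$, checking non-adjacency. This changes $x_i$ by $-1$ and can be arranged to leave $x_j$, $j<i$, unchanged by forbidding $y,a,b\in W\cup N(W_{<i})$; only $O(wd)$ choices are lost. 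Counting forward and backward switchings gives
\[
\frac{|\mathcal{C}_{t-1}|}{|\mathcal{C}_t|}\approx\frac{t\,(n-u)d}{(d-t)\,u d}\approx\frac{t}{du/n}\cdot\frac{n-u}{n-\ldots},
\]
a ratio like a binomial's. Hence $|\mathcal{C}_t|$ is log-concave-ish in $t$, and over a window of width $\sqrt{\ell}$ around $s$ it changes by only bounded factors. Thus $\mathbb{P}(x_i=s\mid\text{history})=O(1/\sqrt{du/n})=O(\ell^{-1/2})$. Multiplying over $i$ gives $\mathbb{P}(\mathcal{B})\le (C\ell^{-1/2})^{w}$, which is $\le 5^{-n}$ once $\ell$ is large. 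Because $d=o(n)$ and $u\le n/2$, the error terms $O(d^2/n)$ and $O(wd/(ud))$ in the switching counts are relative errors of order $o(1)+O(1/\ln\ell)$ per step; this is where $d_0(\ell)$ large and $d=o(n)$ are used.

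The main obstacle is this sequential switching step. The conditional ratio estimates must stay uniform after conditioning on the earlier $x_j=s$, and switchings must not wreck those constraints. One also has to get the $1/\sqrt{\text{variance}}$ anti-concentration from ratio estimates valid only in a window. My first attempt there is to show the ratio is $1+O(|t-\mu|/\mu)+o(1/\sqrt\mu)$ for $|t-\mu|\le\sqrt{\mu}$, where $\mu=du/n$. This yields at least $\sqrt\mu/2$ values of $t$ with $|\mathcal{C}_t|\ge c|\mathcal{C}_s|$, giving the bound $2/(c\sqrt\mu)$.
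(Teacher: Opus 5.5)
\textbf{There is a genuine gap in the switching step.} Your architecture matches the paper's: fix $U$, a set $W$ of $10n/\ln\ell$ vertices outside $U$, and $s$; gain an anti-concentration factor for each vertex of $W$ by switchings; then take a union bound. Since $|W|\ln\ell=10n$, a per-vertex factor $\ell^{-c}$ for a suitable fixed $c>0$ already suffices. The problem is the switching step, which is the whole content of the lemma.

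A window argument over $L$ consecutive values of $t$ needs each ratio $|\mathcal{C}_{t-1}|/|\mathcal{C}_t|$ to be determined up to a factor $1\pm\eta$ with $L\eta=O(1)$. So the forward and backward counts must be known to relative precision $O(1/L)$, uniformly over the graphs in each class. Your counts contain graph-dependent terms that you do not control:
\begin{itemize}
\item the backward choices of $a$ number $d-t+1-|N(w_i)\cap W|$, and $|N(w_i)\cap W|$ is typically about $10d/\ln\ell$;
\item the forward count depends on the number of edges from $W$ to $[n]\setminus(U\cup W)$, hence on the unconditioned $x_j$, $j>i$;
\item the constraint $a\not\sim b$ removes $e(N(a),U)$ backward choices, which can be of order $d^2$, a relative loss of up to $d/u\le d^2/(n\ell)$.
\end{itemize}

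By your own accounting the relative error is $o(1)+O(1/\ln\ell)$, and that is fatal. With $\eta\asymp 1/\ln\ell$ the window has width only $O(\ln\ell)$, so the per-vertex factor is $O(1/\ln\ell)$. Then $(C/\ln\ell)^{|W|}=\exp(-\Theta(n\ln\ln\ell/\ln\ell))$, which loses to the roughly $2^n$ choices of $U$. So this error must be eliminated, not absorbed.

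Your target precision $o(\mu^{-1/2})$ contradicts these bounds. It is also unattainable in general, because non-adjacency corrections have relative size $d/n$, which is not $o(d^{-1/2})$ once $d\gg n^{2/3}$. Two further problems:
\begin{itemize}
\item Forbidding $y,a,b\in N(W_{<i})$ is not cheap. Since $|W|d=10nd/\ln\ell\gg n$, the set $N(W_{<i})$ can be almost all of $[n]$.
\item ``Condition on everything except the edges at $w_i$ and a suitable pool'' is undefined, and it is incompatible with switching an arbitrary edge $ab$.
\end{itemize}

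\textbf{The missing idea is to make the counts essentially deterministic by exposure and conditioning.} The paper does this in three steps.
\begin{itemize}
\item It first exposes $G[W]$ and keeps only those $x_i\in W$ with $d_i\ge 0.8d$ neighbours outside $W$. This number is now fixed, and at least $0.9|W|$ such vertices exist once $e(W)\le 0.01d|W|$.
\item It switches $\{x_i,y\},\{u',v\}\mapsto\{x_i,u'\},\{y,v\}$ with $y,v\notin U\cup W$ and $u'\in U$. An analogous switching with $y\in U$ and $u',v\notin U\cup W$ lowers $N_U(x_i)$. Neither switching touches any other vertex of $W$.
\item It fixes the exact numbers $h_{\mathrm{in}}$, $h_{\mathrm{out}}$ of edges inside $[n]\setminus(U\cup W)$ and between $U$ and $[n]\setminus(U\cup W)$, taking the most likely values at a cost of $(nd)^2$ in the union bound, and tracks their shift by $\pm\delta$.
\end{itemize}

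The counts then become $(d_i-s-\delta)(h_{\mathrm{out}}-\delta-\mu)$ and $(s+\delta+1)(h_{\mathrm{in}}+\delta+1-\nu)$, with $\nu\le 2d^2$. Also $\mu\le 2dm/\ell$ outside the exceptional set $\mathcal{X}$ of graphs in which some $N(v)$ sends at least $dm/\ell$ edges to $U$. The probability of $\mathcal{X}$ is small enough to be negligible against the bad class under the contrary assumption.

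The per-step errors are then $O(1/\ell+d/n+\delta/s)$. A window $|\delta|\le\ell^{0.3}$ therefore yields $\ell^{0.2}$ comparable classes per vertex. The paper then iterates with classes in which earlier vertices may deviate from $s$ by at most $\ell^{0.3}$, rather than conditioning on exact histories. This gives $\ell^{-t'/5}$ with $t'\ge 0.9|W|$, which is enough.

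A minor point: the Expander Mixing Lemma gives $|s-du/n|\le\lambda\sqrt{u/w}$, not $3n\sqrt d/\sqrt{uw}$. Your reduction to $s=(1+o_\ell(1))du/n$ is nonetheless correct.
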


\begin{proof}



Set $t=10n/\ln\ell$. The Expander Mixing Lemma and \cref{th:lambda} imply that whp, for every set $U$ of size $m\in[n\ell/d,n/2]$ and every set $V\subset[n]\setminus U$ of size $t$,
\begin{itemize}
\item the number of edges between $U$ and $V$ is at most $\min\{dm/4,3td/4\}$,
\item the number of edges induced by $V$ is at most $0.01dt$,
\item the number of edges between $U$ and $[n]\setminus (U\cup V)$ is at least $md/3$,
\item the number of edges between $[n]\setminus(U\cup V)$ and $U\cup V$ is at most $2d(n-m-t)/3$.
\end{itemize}
Due to the first bullet point, it suffices to prove the lemma for $s$ such that $10sn/\ln\ell\leq dm/4$ and $st\leq 3td/4$. Therefore, we fix an integer
\begin{equation}
s\in\left[0,\min\left\{\frac{3d}{4},\frac{dm\ln\ell}{40n}\right\}\right].
\label{eq:s-upper}
\end{equation}
We also fix a set $U$ of size $m\in[\frac{n\ell}{d},\frac{n}{2}]$ and a set $V\subset[n]\setminus U$ of size $t$.
 Let us estimate the probability that every vertex from $V$ has exactly $s$ neighbours in $U$.



Let us order the vertices in $V$ arbitrarily: $x_1,x_2,\ldots,x_t$. Expose the edges in $V$. Let $d-d_i$ be the degree of $x_i$ in $\mathbf{G}_n[V]$. Let $V'\subset V$ be the set of all vertices with $d_i\geq 0.8d$. Due to the second bullet point above, we may assume that $\sum d_i\leq 0.02dt$ and therefore, $|V'|\geq 0.9t$. Denote $t'=|V'|$. Without loss of generality we assume that $V'=\{x_1,\ldots,x_{t'}\}$. 

Let $\mathcal{E}$ be the event that every $x_i$ from $V'$ has $s$ neighbours in $U$. 
 Let $h_{in}\geq d(n-m-t)/6$ and $h_{out}\geq md/3$ be integers such that 
$$
\mathbb{P}(\mathcal{E}\wedge \{|E(\mathbf{G}_n[[n]\setminus (U\cup V)])|=h_{in},\,|E(\mathbf{G}_n[U\times([n]\setminus (U\cup V))]|=h_{out}\})\text{ is maximum}.
$$ 
Let $\Sigma_0$ be the set of all $d$-regular graphs $G$ on $[n]$ satisfying $\mathcal{E}$ and such that $G[V]=\mathbf{G}_n[V]$ and $G[[n]\setminus (U\cup V)]$ and $G[U\times ([n]\setminus(U\cup V))]$ have exactly $h_{in}$ and $h_{out}$ edges, respectively. The following claim completes the proof of \cref{lm:from_single_set-to_partition-gen}.

\begin{claim}
$\mathbb{P}(\mathbf{G}_n\in\Sigma_0\mid\mathbf{G}_n[V])\leq \ell^{-t'/5}$.
\label{cl:large-switchings}
\end{claim}

Indeed, by the union bound over $U,V$ and the number of edges in the graphs $G[[n]\setminus (U\cup V)]$ and $G[U\times ([n]\setminus(U\cup V))]$, we get that probability that there exist sets $U,V$ such that $\mathcal{E}$ holds is at most
\begin{align*}
 o(1)+(nd)^2\sum_{m=n\ell/d}^{n/2}{n\choose m}{n\choose t}\ell^{-t'/5} & \stackrel{t'\leq 0.9t}\leq o(1)+n^4 2^n\left(\frac{en}{t\ell^{9/50}}\right)^t\\
 &\leq o(1)+n^4 2^n \left(\frac{\ln\ell}{ \ell^{9/50}}\right)^{10n/\ln \ell}\leq o(1)+n^4 2^n e^{-n}=o(1).
\end{align*}

\begin{proof}[Proof of \cref{cl:large-switchings}.]
Assume the opposite:  
\begin{equation}
\label{eq:contradiction-ELO}
    \mathbb{P}(\mathbf{G}_n\in\Sigma_0)>\ell^{-t'/5}.
\end{equation}

Let $\mathcal{X}$ be the set of $d$-regular graphs $G$ on $[n]$ such that there exists a vertex $v$ with at least $dm/\ell$ edges between $N(v)$ and $U$. Then, due to \cref{cl:regular-probability-bound},
\begin{equation}
 \mathbb{P}(\mathbf{G}_n\in\mathcal{X})\leq n{dm\choose md/\ell}\left(\frac{2d}{n}\right)^{md/\ell}\leq n\left(\frac{2e\ell d}{n}\right)^{md/\ell}=o(e^{-n}).
 \label{eq:X-small}
\end{equation}

 Fix an integer $\delta$ such that $|\delta|\leq \ell^{0.3}$. Let $\Sigma_1(\delta)$ be the set of $d$-regular graphs $G$ on $[n]$ such that
\begin{itemize}
\item $G[V]=\mathbf{G}_n[V]$,
\item each vertex $x_2,\ldots,x_{t'}$ has exactly $s$ neighbours in $U$,
\item $x_1$ has $s+\delta$ neighbours in $U$, and 
\item there are $h_{in}+\delta$ edges in $G[[n]\setminus (V\cup U)]$ and $h_{out}-\delta$ edges in $G[U\times([n]\setminus (U\cup V))].$
\end{itemize}
We shall prove that 
\begin{equation}
\left|\bigcup_{|\delta|\leq \ell^{0.3}}\Sigma_1(\delta)\right|\geq\ell^{0.2}|\Sigma_0|.
\label{eq:first-step-multiply}
\end{equation}

Let us first compare sizes of $\Sigma_1(1)$ and $\Sigma_0$. Take $G\in\Sigma_0$ and consider a tuple of vertices $(y,u,v)$ such that 
\begin{itemize}
\item $y\notin U\cup V$, $u\in U$, $v\notin U\cup V$,
\item and $\{x_1,y\},\{u,v\}\in E(G)$, $\{x_1,u\},\{y,v\}\notin E(G)$.
\end{itemize}
If we switch
\begin{equation}
 \{x_1,y\},\{u,v\}\mapsto\{x_1,u\},\{y,v\},
\label{eq:switch-def}
\end{equation}
we get a graph from $\Sigma_1(1)$. For every $G\in\Sigma_0\setminus\mathcal{X}$ the number of {\it forward switchings} (i.e., tuples $(y,u,v)$) equals $(d_1-s)(h_{out}-\mu)$, where $\mu\in[0,2dm/\ell]$ since, first, $G\notin\mathcal{X}$ and so the number of edges between $N(y)$ and $U$ is at most $dm/\ell$, and second, the number of edges that touch $N(x_1)\cap U$ is at most $sd=O(d^2 m/n)<dm/\ell$ due to~\eqref{eq:s-upper}. On the other hand, for every graph $G\in\Sigma_1(1)$, the number of {\it backward switchings} equals $(s+1)(h_{in}+1-\nu)$, where $\nu\in[0,2d^2]$. We get
$$
 |\Sigma_0|(d_1-s)(h_{out}-\mu)-|\mathcal{X}\cap\Sigma_0|O(d^2n)=|\Sigma_1(1)|(s+1)(h_{in}+1-\nu).
$$
In a similar way, we compare the sizes of $\Sigma_1(-1)$ and $\Sigma_0$: For $G\in\Sigma_0$, we take a tuple of vertices $(y,u,v)$ such that 
\begin{itemize}
\item $y\in U$, $u,v\notin U\cup V$,
\item and $\{x_1,y\},\{u,v\}\in E(G)$, $\{x_1,u\},\{y,v\}\notin E(G)$,
\end{itemize}
and switch as in~\eqref{eq:switch-def}. We get
$$
 |\Sigma_0|s(h_{in}-\nu')=|\Sigma_1(-1)|(d_1-s+1)(h_{out}+1-\mu')-|\mathcal{X}\cap\Sigma_1(-1)|O(d^2n),
$$
where $\nu'\in[0,2d^2]$, and $\mu'\in[0,2dm/\ell].$ 

If $(d_1-s)h_{out}>s h_{in}$, then 
$$
\frac{|\Sigma_1(1)|}{|\Sigma_0|}>1-O(1/\ell)-O(1/s)-O\left(d^2n\cdot  \frac{|\mathcal{X}|}{|\Sigma_0|}\right)=1-O(1/\ell)-O(1/s)
$$ 
due to~\eqref{eq:X-small} and the assumption~\eqref{eq:contradiction-ELO} and the lower bounds on $h_{out}$ and $h_{in}$. Otherwise, 
\begin{equation}
\frac{|\Sigma_1(-1)|}{|\Sigma_0|}>1-O(1/\ell).
\label{eq:-1}
\end{equation}

First we assume $(d_1-s)h_{out}>sh_{in}$. If $s<\ell^{0.7}$, then $(d_1-s)h_{out}=\Omega(\ell^{0.3}s h_{in})$ since $h_{out}=\Theta(md)$ and $h_{in}=\Theta(dn)$. We conclude that $|\Sigma_1(1)|/|\Sigma_0|=\Omega(\ell^{0.3})>\ell^{0.2}$, completing the proof of~\eqref{eq:first-step-multiply}.

Let $s\geq \ell^{0.7}$. For $\delta=1,\ldots,\ell^{0.3}-1$, we define a switching operation that maps $\Sigma_1(\delta)$ to $\Sigma_1(\delta+1)$ in exactly the same way. We get
$$
 |\Sigma_1(\delta)|(d_1-s-\delta)(h_{out}-\delta-\mu_{\delta})-|\mathcal{X}\cap\Sigma_1(\delta)|O(d^2n)=|\Sigma_1(\delta+1)|(s+\delta+1)(h_{in}+1+\delta-\nu_{\delta}),
$$
for some $\nu_{\delta}\in[0,2d^2]$ and $\mu_{\delta}\in[0,2dm/\ell]$. We then get 
$$
\frac{|\Sigma_1(\delta+1)|}{|\Sigma_1(\delta)|}>1-O(\delta/s)-O(1/\ell)=1-O(\delta/\ell^{0.7}),
$$ 
implying 
$$
\frac{|\Sigma_1(\delta+1)|}{|\Sigma_0|}=\frac{|\Sigma_1(1)|}{|\Sigma_0|}\cdot\prod_{j=1}^{\delta}\frac{|\Sigma_1(j+1)|}{|\Sigma_1(j)|}>\prod_{j=1}^{\delta+1}(1-O(j/\ell^{0.7}))=1-O(\delta^2/\ell^{0.7})=1-O(\ell^{-0.1}).
$$ 
Consequently, we can deduce
$$
 \left|\bigcup_{|\delta|\leq \ell^{0.3}}\Sigma_1(\delta)\right|\geq\left|\bigcup_{\delta=1}^{\ell^{0.3}}\Sigma_1(\delta)\right|\geq (1-O(\ell^{-0.1}))\ell^{0.3}|\Sigma_0|>\ell^{0.2}|\Sigma_0|,
$$
as needed.

Now, let $(d_1-s)h_{out}\leq sh_{in}$. In particular, $s=\Omega(\ell)$. Recall~\eqref{eq:-1}. Similarly, for $\delta=1,\ldots,\ell^{0.3}-1$, we define a switching operation that maps $\Sigma_1(-\delta)$ to $\Sigma_1(-\delta-1)$ and get
$$
 |\Sigma_1(-\delta)|(s-\delta)(h_{in}-\delta-\nu'_{\delta})=|\Sigma_1(-\delta-1)|(d_1-s+\delta+1)(h_{out}+1+\delta-\mu'_{\delta})-|\mathcal{X}\cap\Sigma_1(-\delta-1)|O(d^2n),
$$
for some $\nu'_{\delta}\in[0,2d^2]$ and $\mu'_{\delta}\in[0,2dm/\ell]$. We then get 
$$
\frac{|\Sigma_1(-\delta-1)|}{|\Sigma_1(-\delta)|}>1-O(\delta/s)-O(1/\ell)=1-O(\delta/\ell),
$$ 
implying 
$$
\frac{|\Sigma_1(-\delta-1)|}{|\Sigma_0|}>\prod_{j=1}^{\delta+1}(1-O(j/\ell))=1-O(\delta^2/\ell)=1-O(\ell^{-0.4}).
$$ 
Finally, this gives 
$$
 \left|\bigcup_{|\delta|\leq \ell^{0.3}}\Sigma_1(\delta)\right|\geq\left|\bigcup_{\delta=1}^{\ell^{0.3}}\Sigma_1(-\delta)\right|\geq (1-O(\ell^{-0.4}))\ell^{0.3}|\Sigma_0|>\ell^{0.2}|\Sigma_0|,
$$
as needed.

Let $\Sigma_1:=\bigcup_{|\delta|\leq \ell^{0.3}}\Sigma_1(\delta)$. We now define sets $\Sigma_2,\Sigma_3,\ldots,\Sigma_{t'}$ as follows: for $|\delta|\leq\ell^{0.3}$, let $\Sigma_i(\delta)$
be the set of $d$-regular graphs $G$ on $[n]$ such that
\begin{itemize}
\item $G[V]=\mathbf{G}_n[V]$,
\item each vertex $x_{i+1},\ldots,x_{t'}$ has exactly $s$ neighbours in $U$,
\item $x_i$ has $s+\delta$ neighbours in $U$,
\item for every $j\in[i-1]$, $|N_U(x_j)-s|\leq \ell^{0.3}$,
\item there are $h_{in}+\sum_{j=1}^{i-1}(|N_U(x_j)|-s)+\delta$ edges in $G[[n]\setminus (U\cup V)]$ and $h_{out}-\sum_{j=1}^{i-1}(|N_U(x_j)|-s)-\delta$ edges in $G[U\times([n]\setminus (U\cup V))].$
\end{itemize}
Let $\Sigma_{i}:=\bigcup_{|\delta|\leq \ell^{0.3}}\Sigma_{i}(\delta)$. The proof that $
\left|\Sigma_{i+1}\right|\geq \ell^{0.2}|\Sigma_i|$
is identical to that of~\eqref{eq:first-step-multiply} since
$$
 \sum_{j=1}^{i-1}(|N_U(x_j)|-s)\leq t\ell^{0.3}=O(h_{out}/(\ell^{0.7}\ln\ell)),
$$
and so it does not contribute significantly to $\nu_{\delta},\mu_{\delta},\nu'_{\delta},$ and $\mu'_{\delta}$. Therefore,
$$
 |\Sigma_0|\leq \ell^{-0.2}|\Sigma_1|\leq \ell^{-0.4}|\Sigma_2|\leq\ldots\leq \ell^{-0.2t'}|\Sigma_{t'}|
$$
--- a contradiction which completes the proof of the claim.
\end{proof}
The proof of the claim was the last piece of the puzzle in establishing the lemma.
\end{proof}

\begin{lemma}
\label{lem:exactly_s_neighbours}
Let $\ell$ be a large enough constant, and let $d_0 = d_0(\ell)$ be another large enough constant. Let $d_0\leq d=o(n)$ and $\mathbf{G}_n\sim\mathcal{G}_{n,d}$. Then, the following property holds whp: for every set $U$ of size $|U|\in[\frac{n}{2d},\frac{n\ell}{d}]$ and every integer $s$ such that $1 \le s \le \ell$, there are at most $0.999n$ vertices that have exactly $s$ neighbours in $U$.
\end{lemma}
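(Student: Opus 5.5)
I would follow the template of \cref{lm:from_single_set-to_partition-gen}: fix a set $U$ of size $m\in[\frac{n}{2d},\frac{n\ell}{d}]$, an integer $1\le s\le\ell$ and a set $V\subseteq[n]\setminus U$ of size $t=0.998n$. I would then bound the probability that every vertex of $V$ has exactly $s$ neighbours in $U$ by $e^{-cn}$ with a constant $c$ large enough to absorb the union bound. The union bound here costs at most $\binom{n}{m}\binom{n}{t}\ell\cdot\mathrm{poly}(n)$. Since $m\le n\ell/d$ and $d\ge d_0(\ell)$, we have $\binom{n}{m}=e^{O(n\ell\ln d/d)}=e^{o_{d_0}(1)n}$, while $\binom{n}{t}\le e^{H(0.002)n}\le e^{0.02n}$. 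So it suffices to show the probability is at most $e^{-0.03n}$. The count is over sets $V$ of vertices outside $U$; vertices inside $U$ number only $m=o(n)$, so they are negligible.

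Before the switchings I would record a few deterministic whp facts, via the Expander Mixing Lemma with \cref{th:lambda}, or via \cref{cl:regular-probability-bound} as in \eqref{eq:X-small}. First, $e(U,[n]\setminus U)=dm-2e(U)$ lies in $[dm/2,dm]$; this holds because $|U|$ is tiny, so $U$ spans at most $O(m)$ edges. Second, $V$ spans roughly $dt^2/(2n)$ edges. Third, no vertex $y$ has too many edges between $N(y)$ and $U$. Since the average number of neighbours in $U$ is $dm/n\in[1/2,\ell]$, which is of order the fixed $s$, the event is a Poisson-like anti-concentration event. For each vertex it costs a constant factor rather than the factor $\ell^{-1/5}$ seen before. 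This is why one cannot take $t$ to be an $o(n)$ fraction: we need linearly many vertices, each contributing a constant factor $<1$.

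The core would be a switching claim analogous to \cref{cl:large-switchings}. Order the vertices of $V$ (restricted to those with most of their degree outside $V\cup U$) as $x_1,\ldots,x_{t'}$. Let $\Sigma_i$ denote graphs in which $x_{i+1},\ldots,x_{t'}$ have exactly $s$ neighbours in $U$ and $x_1,\ldots,x_i$ have $N_U(x_j)\in\{s-1,s,s+1\}$, with the edge counts $h_{in},h_{out}$ adjusted accordingly. Apply the switching $\{x_i,y\},\{u,v\}\mapsto\{x_i,u\},\{y,v\}$ with $u\in U$ and $y,v\notin U\cup V$, and its reverse. As in the earlier proof, forward switchings number about $(d-s)h_{out}$ and backward ones about $(s+1)h_{in}$, where now $h_{out}=\Theta(dm)$ and $h_{in}=\Theta(dn)$. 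Their ratio is thus of order $\frac{dm}{n(s+1)}$, which lies between $\frac{1}{2(\ell+1)}$ and $\ell$: bounded but not large. So instead of gaining $\ell^{0.2}$ per step, I aim to show $|\Sigma_{i}|\ge(1+c_\ell)|\Sigma_{i-1}|$ for some constant $c_\ell>0$ depending on $\ell$. This follows since either $|\Sigma(+1)|$ or $|\Sigma(-1)|$ is at least a constant fraction $\min\{\text{ratio},1/\text{ratio}\}\ge 1/(3\ell)$ of $|\Sigma(0)|$. For $s\ge1$, both directions are available, which is exactly why $s=0$ is excluded: for $s=0$ and $m\approx n/(2d)$ the typical proportion with $0$ neighbours is about $e^{-1/2}$. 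Iterating $t'$ times gives probability at most $(1+1/(3\ell))^{-t'}$.

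The main obstacle is that this bound is only $e^{-\Theta(t/\ell)}$, too weak against the $e^{0.02n}$ union bound when $\ell$ is large. I would therefore refine the comparison by tracking the actual distribution. When every vertex of $V$ is forced to have exactly $s$ neighbours, the switching ratios show that the per-vertex weight of value $s$ is at most $\max_j \pi(j)$, where $\pi$ is approximately Poisson with mean $\lambda=dm/n$. I would try to show this by allowing all values $j\in[0,C\ell]$ at each step and comparing $|\Sigma(j)|/|\Sigma(s)|\approx \lambda^{j-s}s!/j!$ by chained switchings. This yields a per-vertex factor $\pi_\lambda(s)/\sum_j\pi_\lambda(j)\le\max_k\mathrm{Po}(\lambda)(k)\le 1-e^{-\lambda}\lambda\cdots$. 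For $\lambda\in[1/2,\ell]$ and $s\ge1$, this factor is bounded away from $1$ by an absolute constant, e.g. $\mathrm{Po}(\lambda)(s)\le e^{-1}<0.37$ for $s\ge1$. Then the probability is at most $0.37^{0.99t}\approx e^{-0.98n}$, which beats the union bound. Controlling the error terms in these chained ratios uniformly, where the errors are $O(1/\ell)$ from the $\mathcal{X}$-type degenerate configurations and $O(\ell/d)$ from $\mu,\nu$, is the delicate step, and is where $d_0(\ell)$ large is used.
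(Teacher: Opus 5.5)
There is a genuine gap. Your reduction to $|V|=t=0.998n$ and the union-bound arithmetic are fine. The problem is that the only bound you actually derive, $(1+1/(3\ell))^{-t'}$, loses to $\binom{n}{t}\approx e^{0.014n}$, as you note yourself. The refinement meant to fix this rests on switching counts that are not available here, because the switching you import from \cref{lm:from_single_set-to_partition-gen} trades $U$-edges with the reservoir $W=[n]\setminus(U\cup V)$, which now has only about $0.002n$ vertices. This causes three concrete failures.
\begin{itemize}
\item By the Expander Mixing Lemma, only $O(n/d)$ vertices have half their degree in a set of that size. So restricting to vertices ``with most of their degree outside $V\cup U$'' leaves $t'=O(n/d)$, which is useless against an $e^{\Theta(n)}$ union bound. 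You would have to work with $N_W(x_i)\approx 0.002d$ instead.
\item On the event, $h_{out}=e(U,W)=dm-2e(U)-st$ is pinned and can be $o(dm)$, even $0$. The lower bounds $h_{out}\ge md/3$ and $h_{in}\ge d(n-m-t)/6$ of the earlier lemma relied on $|V|$ being a small fraction of $n$ and have no analogue here.
\item Your $t'=\Theta(n)$ steps shift $h_{out}$ by up to $\Theta(n)$. The earlier proof needed the total drift $t\ell^{0.3}$ to be $o(h_{out})$, and the $\mathcal X$-error $\mu\le 2dm/\ell$ need not be small relative to a pinned small $h_{out}$.
\end{itemize}
The Poisson comparison does not address any of this. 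Moreover, chaining $C\ell$ ratios with relative errors $O(1/\ell)$ determines them only up to a factor $e^{O(C)}$, and the effective parameter is $(d_i-s)h_{out}/h_{in}$, not $dm/n$. Your $1/(3\ell)$ loss is itself avoidable: the $+1$ and $-1$ ratios multiply to about $s/(s+1)$, so their sum is at least about $\sqrt2$. But this again presupposes exactly the accurate counts that are in doubt.

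The missing idea is to transfer $U$-edges \emph{between vertices of $V$} rather than to and from the outside. The paper proceeds as follows.
\begin{itemize}
\item It takes $|V|=(1-\varepsilon)n$ with $\varepsilon=0.001$ and splits $V=V'\sqcup V''$ with $|V'|=n/4$.
\item It exposes the edges inside $V'$ and between $V'$ and $[n]\setminus(U\cup V)$, and uses the mixing lemma to find $n/12$ vertices $x_i\in V'$, each with at least $d/2-s$ neighbours in $V''$.
\item The switching $\{x_i,y\},\{u,v\}\mapsto\{x_i,u\},\{y,v\}$, with $y,v\in V''$, $u\in U$ and $|N_U(v)|=s$, moves one $U$-neighbour from $v$ to $x_i$ and preserves the exposed edges.
\end{itemize}
Since every vertex of $V$ has exactly $s\ge1$ neighbours in $U$, there are exactly $s|V''|$ edges between $U$ and $V''$. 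So the forward count is about $(d/2)s|V''|$ and the backward count is at most $(s+1)|V''|d$. The ratio is therefore bounded below by an absolute constant, uniformly in $dm/n$, in $\ell$ and in the step, with no $h_{in},h_{out}$ bookkeeping and no exceptional set. Letting $\Sigma_i$ consist of the graphs in which $x_i$ is the last of $x_1,\dots,x_{n/12}$ with $s+1$ neighbours in $U$ (the deficits sitting in $V''$) makes the classes disjoint. Then $|\Sigma_i|\ge\frac13|\Sigma_0\cup\dots\cup\Sigma_{i-1}|$ yields a conditional probability of at most $4(4/3)^{-n/12}$. This beats $\binom{n}{\varepsilon n}\binom{n}{\ell n/d}=e^{(\varepsilon\ln(e/\varepsilon)+o(1))n}$. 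This is also where $s\ge1$ is really used: for $s=0$ there are no $U$--$V''$ edges to transfer.
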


\begin{proof}
Let $\varepsilon=0.001$. Let $\mathcal{E}$ be the event that, for any disjoint sets $V$ and $W$ of size $n/4$ and $\varepsilon n$, respectively, the number of edges in $V\cup W$ that touch $V$ is at most $dn/30$. By the Expander Mixing Lemma and \cref{th:lambda} the event $\mathcal{E}$ holds whp. 

Fix a set $U$ of size $m\in[\frac{n}{2d},\frac{\ell n}{d}]$ and a set $V=\{x_1,\ldots,x_t\}\subset[n]\setminus U$ of size $n(1-\varepsilon)$. Divide $V=V'\sqcup V''$, where $V'$ consists of the first $n/4$ vertices. Expose edges inside $V'$ and between $V'$ and $[n]\setminus (U\cup V)$ and assume that $E:=E(\mathbf{G}_n[V'])\cup E(\mathbf{G}_n[V'\times([n]\setminus(U\cup V))])$ has size at most $dn/30$. Let $\tilde V'\subset V'$ be the set of vertices that belong to at most $d/2$ edges in $E$. Clearly, $|\tilde V'|\geq n/12$. Without loss of generality, we assume $\tilde V'=\{x_1,\ldots,x_{t'}\}$, where $t'\geq n/12$. Note that each $x_i$, $i\in[t']$, has at least $d/2-s$ neighbours in $V''$.

Let $\Sigma_0$ be the set of $d$-regular graphs $G$ on $[n]$ such that 
\begin{equation}
\label{eq:exposed-edges}
    E(G[V'])\cup E(G[V'\times ([n]\setminus(U\cup V))])=E
\end{equation} 
and each vertex in $V$ has exactly $s$ neighbours in $U$. 
Let $\Sigma_1$ be the set of $d$-regular graphs $G$ on $[n]$ such that~\eqref{eq:exposed-edges} is satisfied and
\begin{itemize}
\item each vertex $x_2,\ldots,x_t$ has exactly $s$ neighbours in $U$, except for some $x_i\in V''$, whereas $x_i$ has $s-1$ neigbhours in $U$,
\item $x_1$ has $s+1$ neighbours in $U$.
\end{itemize}
We shall prove that $|\Sigma_1|\geq |\Sigma_0|/3$.
Take $G\in\Sigma_0$ and consider a tuple of vertices $(y,u,v)$ such that 
\begin{itemize}
\item $y\in V''$, $u\in U$, $v\in V''$,
\item and $\{x_1,y\},\{u,v\}\in E(G)$, $\{x_1,u\},\{y,v\}\notin E(G)$.
\end{itemize}
If we switch
\begin{equation}
 \{x_1,y\},\{u,v\}\mapsto\{x_1,u\},\{y,v\},
\label{eq:switch-def-2}
\end{equation}
we get a graph from $\Sigma_1$. For every $G\in\Sigma_0$ the number of {\it forward switchings} is at least $(d/2-s)(((3/4-\varepsilon)n-d)s-sd)$: there are at least $d/2-s$ choices of $y$, at least $(3/4-\varepsilon)n-d$ choices of $v\in V''$ that is not adjacent to $y$, and, therefore, at least $((3/4-\varepsilon)n-d)s$ choices of the edge $\{u,v\}$ where $u\in U$. It then remains to subtract at most $sd$ edges $\{u,v\}$ where $u\in U$ is a neighbour of $x_1$. On the other hand, for every graph $G\in\Sigma_1$, the number of {\it backward switchings} is at most $(s+1)(3n/4)d$, with a room for improvement since the choice of $v=x_i$ is actually unique. We get
$$
 |\Sigma_0|(d/2-s)(((3/4-\varepsilon)n-d)s-sd)\leq|\Sigma_1|(s+1)(3n/4)d
$$
implying $|\Sigma_1|\geq|\Sigma_0|/3$, as desired. 

We now let $\Sigma_2$ be the set of $d$-regular graphs $G$ on $[n]$ such that~\eqref{eq:exposed-edges} is satisfied and
\begin{itemize}
\item each vertex $x_3,\ldots,x_t$ has exactly $s$ neighbours in $U$, except for some $x_{i_1},x_{i_2}\in V''$ that have $s-1$ neighbours in $U$,
\item $|N_U(x_1)|\in[s,s+1]$,
\item $x_2$ has $s+1$ neighbours in $U$.
\end{itemize}
Take $G\in\Sigma_0\cup\Sigma_1$ and consider a tuple of vertices $(y,u,v)$ such that 
\begin{itemize}
\item $y\in V''$, $u\in U$, $v\in V''$, and $|N_U(v)|=s$, 
\item $\{x_2,y\},\{u,v\}\in E(G)$, $\{x_2,u\},\{y,v\}\notin E(G)$.
\end{itemize}
If we switch as in~\eqref{eq:switch-def-2} with $x_2$ instead of $x_1$, then
we get a graph from $\Sigma_2$. As above, for every $G\in\Sigma_0\cup\Sigma_1$ the number of {\it forward switchings} is at least $(d/2-s)(((3/4-\varepsilon)n-d-1)s-sd)$. On the other hand, for every graph $G\in\Sigma_1$, the number of {\it backward switchings} is at most $(s+1)(3n/4)d$, as above. We get
$$
 |\Sigma_0\cup\Sigma_1|(d/2-s)(((3/4-\varepsilon)n-d-1)s-sd)\leq|\Sigma_2|(s+1)(3n/4)d
$$
implying $|\Sigma_2|\geq|\Sigma_0\cup\Sigma_1|/3$, as well.

Similarly, we define $\Sigma_3,\ldots,\Sigma_{n/12}$. For the $i$-th set $\Sigma_i$, we get that
$$
 |\Sigma_0\cup\ldots\cup\Sigma_{i-1}|(d/2-s)(((3/4-\varepsilon)n-d-(i-1))s-sd)\leq|\Sigma_i|(s+1)(3n/4)d,
$$
implying $|\Sigma_i|\geq|\Sigma_0\cup\ldots\cup\Sigma_{i-1}|/3$ for all $i\geq 1$. It is easy to see by induction that $|\Sigma_i|\geq(4/3)^{i-1}|\Sigma_0|/3$ for all $i\geq 1$. 
Thus, we get 
$$
 |\Sigma_{n/12}|>(4/3)^{n/12-1}|\Sigma_0|/3,
$$
implying 
$$
\mathbb{P}(\mathbf{G}_n\in\Sigma_0\mid E(\mathbf{G}_n[V'])\cup E(\mathbf{G}_n[V'\times([n]\setminus(U\cup V))])=E)<4\cdot(4/3)^{-n/12}.
$$
Denote $\mathbf{E}_n(U,V):=E(\mathbf{G}_n[V'])\cup E(\mathbf{G}_n[V'\times([n]\setminus(U\cup V))])$. The union bound over $U$ and $V$ gives us that 
\begin{multline*}
 \mathbb{P}(\neg\mathcal{E})+{n\choose\varepsilon n}{n\choose \ell n/d}\sum_{E:\,|E|\leq dn/15}\mathbb{P}(\mathbf{G}_n\in\Sigma_0\mid \mathbf{E}_n(U,V)=E)\cdot\mathbb{P}(\mathbf{E}_n(U,V)=E)\\
 =o(1)+e^{(\varepsilon\ln(e/\varepsilon)+o_d(1))n}(4/3)^{-n/12}=o(1),
\end{multline*}
which completes the proof of the lemma. 
\end{proof}

\subsection{Proof of \cref{thm:sparse_many_large_set}}
\label{sc:proof-thm-2-many-sparse}

With \cref{lm:common_expansion_Thm3,,lm:from_single_set-to_partition-gen,,lem:exactly_s_neighbours} at hand, we can easily prove \cref{thm:sparse_many_large_set}.

\begin{proof}[Proof of \cref{thm:sparse_many_large_set}]
Fix a large enough\footnote{Clearly, if the statement of the theorem is true for large $k$, then it is also true for all $k$.} $k \in \mathbb{N}$, and let $\ell = e^{30k}$, so that the conclusions of~\cref{lm:from_single_set-to_partition-gen,,lem:exactly_s_neighbours} hold. Let $c > 0$ be a small enough constant as in \cref{lm:common_expansion_Thm3}. Let $d_0 = d_0(\ell)$ be a large enough constant as in \cref{lm:common_expansion_Thm3,,lm:from_single_set-to_partition-gen,,lem:exactly_s_neighbours}. Moreover, we will adjust constants $c$ or $d_0$, if needed, for some of the claims below to hold. Since we aim for the statement that holds whp, we may assume that the statements in \cref{lm:common_expansion_Thm3,,lm:from_single_set-to_partition-gen,,lem:exactly_s_neighbours} hold deterministically. 

Consider any non-trivial partition $[n]=V_1\sqcup V_2$. Our goal is to show that after at most $\diam (\mathbf{G}_n)+2$ many rounds of CR, we get a partition into colour classes in which all colour classes have size at most $n / 3k$. To get the desired partition into $k$ parts, each of size at least $n/3k$ but at most $3n/k$, one can iteratively merge colour classes as we did in the proof of \cref{thm:dense_two_rounds}.

Let $u = \min\{ |V_1|, |V_2| \}$ and let $U$ be a colour class of size $u$. Suppose first that $u < cn/d$. Let $r$ be the largest integer such that $ud(d-1)^{r-1} \le cn$. We may adjust $c$ and $d$, if needed, to make sure that $1-100c-4\ln d/d \ge 1/2$, which, in particular, implies that $c \le 1/200$. It follows from \cref{lm:from_single_set-to_partition-gen} that 
$$
|S_r(U)| \ge (1-100c-4\ln d/d)ud(d-1)^{r-1} > \frac {cn}{2d},
$$
and clearly $|S_r(U)| \le ud(d-1)^{r-1} \le cn \le n/2$. After $r \le \diam (\mathbf{G}_n)-1$ rounds of CR, $S_r(U)$ is a union of some colour classes. We may marge them together at this point and continue the process from there.

Suppose now that $U$ is a colour class of size $u = |U| \in [\frac {cn}{2d}, \frac {n}{2d}]$. Let $U'$ be an arbitrary subset of $U$ of size $\frac {cn}{2d}$. On the one hand, trivially, $|S_1(U)| \le d|U| \le n/2$. On the other hand, it follows from \cref{lm:from_single_set-to_partition-gen} that $|S_1(U')| \ge |U'|d/2 = cn/4$ which implies that $|S_1(U)| \ge |S_1(U')| - |U| \ge cn/4 - n/(2d) \ge n\ell/d$, provided that $d$ is large enough. Since $S_1(U)$ is a union of some colour classes, we may merge them into one large colour class and continue from there. 

Suppose this time that $U$ is a colour class of size $u = |U| \in [\frac {n}{2d}, \frac {n\ell}{d}]$. We may adjust $d$, if needed, to make sure $u \le \frac {n\ell}{d} \le \frac {n}{2(\ell+1)}$. After one round of CR, $[n] \setminus U$ is partitioned into sets $W_i$ ($i \in \mathbb{N} \cup \{0\}$); set $W_i$ consists of vertices with exactly $i$ neighbours in $U$. Let $A = \bigcup_{i \le \ell} W_i$ and let $B = \bigcup_{i \ge \ell+1} W_i$. Clearly, $|U|+|A|+|B|=n$. Note that, on the one hand, the number of edges between $U$ and its complement is at least $|B|(\ell+1)$. On the other hand, it is trivially at most $|U|d \le n \ell$. We conclude that 
$$
|B| \le \frac {\ell}{\ell+1} n = \left( 1 - \frac {1}{\ell+1} \right) n,
$$
and so
$$
|A| = n - |B| - |U| \ge \frac {n}{\ell+1} - u \ge \frac {n}{2(\ell+1)}.
$$

Our goal is to show that one can always merge some sets $W_i$ together to get a colour class of size at most $n/2$ but at least $\frac {n}{2(\ell+1)}$, which is at least $\frac {n\ell}{d}$, provided that $d$ is large enough. To that end, we will consider a few cases. If $|A| \le n/2$, then we can simply take the entire set $A$ for the desired colour class. If $|A| > n/2$ but $|A| \le (1-1/(\ell+1))n$, then we may take the entire set $B$ since $|B| = n - |A| - |U| \ge n / (\ell+1) - u \ge n / (2(\ell+1))$ and, trivially, $|B| = n - |A| - |U| < n/2$. 

It remains to concentrate on the case when $|A| \ge (1-1/(\ell+1))n$. Suppose first that $|W_i| \ge n/2$ for some $0 \le i \le \ell$. It follows from \cref{lem:exactly_s_neighbours} that $|W_i| \le 0.999n$. Then, we can take $A \setminus W_i$ for the desired colour class since, trivially, $|A \setminus W_i| \le n - |W_i| \le n/2$ and 
$$
|A \setminus W_i| \ge |A| - |W_i| \ge 0.001 n - \frac {n}{\ell+1} \ge \frac {n\ell}{d},
$$
provided that $d$ is large enough. If $n/4 \le |W_i| < n/2$ for some $0 \le i \le \ell$, then we may simply take $W_i$ as our colour class. Suppose then that $|W_i| < n/4$ for all $0 \le i \le \ell$.  Then, we may start with set $A$ and remove $W_i$'s, one by one, and at some point we get a set of size at most $n/2$ but at least $n/4$. 

Finally, suppose that $U$ is a colour class of size $u = |U| \in [\frac {n\ell}{d}, \frac {n}{2}]$. It follows immediately from \cref{lm:from_single_set-to_partition-gen} that after one round of CR, the complement of $U$ is partitioned into sets of size at most $10n/\ln \ell = n/3k$. We can group some of them together to get a colour class of size at least $n/3k \ge n \ell / d$ but at most $2n/3k \le n/2$ to make sure that after one more round $U$ is also partitioned into sets of size at most $n/3k$. This completes the proof of the theorem.
\end{proof}


\section{Proof of \cref{thm:main_complete_refinement}: Dense Case}
\label{sc:dense-full}

Here, we prove the following.

\begin{theorem}\label{thm:dense-full}
Let $\varepsilon>0$, let $n^{1/2+\varepsilon}\leq d\leq n/2$, and let $\mathbf{G}_n \sim \mathcal{G}_{n,d}$. 
There exists some universal large constant $k\in\mathbb{N}$ such that the following holds whp: for every partition $[n] = V_1 \sqcup \ldots \sqcup V_k$ of the vertex set of $\mathbf{G}_n$ such that for any $i \in [k]$, $n/3k \le |V_i| \le 3n/k$, after three rounds of CR, there are only singleton colour classes.
\end{theorem}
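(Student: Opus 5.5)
We prove \cref{thm:dense-full} by a union bound over pairs of vertices together with an anti-concentration estimate, as sketched in the proof strategy. It suffices to show that whp, for every pair of distinct vertices $x,y$ and every partition $[n]=V_1\sqcup\ldots\sqcup V_k$ with parts of size in $[n/3k,3n/k]$, the vertices $x$ and $y$ receive different colours after three rounds. The number of partitions is at most $k^n$ and the number of pairs is at most $n^2$. So for a fixed $x,y$ and a fixed partition we need a failure probability of $o(n^{-2}k^{-n})$; we will aim for $e^{-\Omega(kn\log d)}$, which is far smaller once $k$ is a large constant.

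\textbf{Reaching a linear set.} Fix $x\neq y$ and expose $N(x),N(y)$. If the colours of $x,y$ agree after round three, then in particular the multisets of round-two colours over $N(x)$ and $N(y)$ coincide. Hence the multisets of round-one colours (degree profiles $(|N(z)\cap V_j|)_{j\le k}$) over $B_2(x)$ and over $B_2(y)$ are related by a colour-preserving bijection between $N(x)$ and $N(y)$. The simplest usable consequence is the following. By the Expander Mixing Lemma and \cref{th:lambda}, whp $|S_2(x)\setminus B_2(y)|$ can be made $\Theta(n)$ for every pair: for $d\ge n^{1/2+\varepsilon}$ we have $|N(N(x))|=\Theta(n)$, while $|N(N(y))|$ covers at most a $(1-\Omega(1))$ fraction. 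If this fails for $d=\Theta(n)$, we use $N(x)\setminus N(y)$ directly, as in the strategy paragraph. Then we fix a set $W\subseteq S_{\le2}(x)\setminus B_1(y)$ of size $\Theta(n)$, chosen so that the profiles of the vertices of $W$ are determined by the colour data forced from $y$'s side. Concretely, we expose all edges incident to $B_1(y)$. The equality of colours then pins the multiset of degree profiles of vertices around $x$ to values already determined by the exposed part. Summing over the at most $n^{O(k\cdot\text{poly})}$ choices of which profile each vertex of $W$ takes (a vector in $[0,d]^k$ per vertex of $N(x)$, so $d^{O(kd)}=e^{o(n)}$ choices since $d\log d\cdot k\ll n$ fails only when $d=\Theta(n)$, where the round count is smaller), we reduce to bounding the probability that every vertex of $W$ has a \emph{prescribed} degree vector into $(V_1,\ldots,V_k)$.

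\textbf{Anti-concentration.} This bound we prove exactly as in the proof of \cref{cl:intermediate_large}. We write the count of $d$-regular graphs with the prescribed profiles as a product: for each $w\in W$ and each $j$, a factor $\binom{|V_j|}{k_{w,j}}$, times counts for the remaining graphs with fixed degree sequences. We bound the latter by \cref{thm:newcount} and \cref{lm:even}. Using \eqref{eq:hyp2} for each part $V_j$ collects $\binom{|V_j|}{k_j}^{|W|}$ at a cost factor $(|V_j|/(k_j(|V_j|-k_j)))^{|W|/2}\approx (k/d)^{|W|/2}$ per part. Merging all binomials via \eqref{eq:hyp0} and \eqref{eq:hyp3} into $\binom{\binom n2}{dn/2}$, and comparing with \eqref{eq:Gnd} (whose loss is only $d^{-n/2}$ up to $c^n$), yields a probability of at most $(O(k)/d)^{k|W|/2}\cdot (Cd)^{n/2}=e^{-\Omega(kn\log d)}$ once $k$ is large and $|W|=\Theta(n)$. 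As in \cref{lem:bound_on_k}, we first restrict via Chernoff/sandwich bounds to degrees $k_{w,j}$ and induced edge counts within constant factors of their means, so that the binomial parameters are in the regime where \eqref{eq:hyp2} gives the $d^{-1/2}$ gain.

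\textbf{Main obstacle.} The main obstacle is the bookkeeping of the conditioning. We must arrange which edges are exposed so that, after fixing the round-two data coming from $y$, the profiles of a linear set $W$ near $x$ are genuinely forced while the edges determining them remain unexposed and uniformly distributed as a graph with given degrees. We must also ensure the entropy of this forced data ($e^{O(d\,k\log d)}$ choices) is beaten by the $d^{-\Omega(kn)}$ gain. I would first try $W=N(x)\setminus N(y)$ when $d=\Theta(n)$, and $W\subseteq S_2(x)$ with round-two coupling otherwise, checking that $dk\log d=o(kn\log d)$ holds since $d\le n/2$ and only a set of size $\Theta(n)$ is ever needed.
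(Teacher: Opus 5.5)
Your proposal has a genuine gap in the ``Reaching a linear set'' step, which is the obstacle you flag at the end and leave open. The claim $|S_2(x)\setminus B_2(y)|=\Theta(n)$ is false. For $d\ge n^{1/2+\varepsilon}$ the graph has diameter $2$ whp: by \cref{thm:sandwich} it contains $\mathcal{G}(n,0.9d/n)$, in which every pair has of order $d^2/n\ge n^{2\varepsilon}$ common neighbours. So $B_2(y)=[n]$. Even the Expander Mixing Lemma alone shows that at most $O(n^2/d^2)=o(n)$ vertices have no neighbour in $N(y)$. Your concrete fallback, exposing the edges at $B_1(y)$, only determines the round-one profiles of $N(y)$, and hence, via the bijection, those of the at most $d$ vertices of $N(x)$. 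That gives a gain of $e^{-\Theta(kd\log d)}$, which cannot beat the $k^n$ partitions when $d\log d=o(n)$. Forcing profiles at distance $2$ from $x$ requires the round-one colours of all of $N(N(y))=[n]$, i.e.\ exposing every edge, and then nothing random remains. So at best your argument covers the top of the range, $d\log d=\Omega(n)$.

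The missing idea is to use the round-two equalities for only a \emph{small} set of neighbours. The paper proceeds as follows.
\begin{enumerate}
\item Fix a $c_2$-preserving bijection $b:N(v)\to N(u)$. Take $M'\subset N(v)\setminus N(u)$ of size $\lfloor n/(20d)\rfloor$, with partner $M=b(M')\subset N(u)$. Extend $b$ to a $c_1$-preserving map from $N(M')$ into $N(M)$. The total number of choices is at most $d!\,(d!)^{n/(20d)}\le d^{O(n)}$.
\item Since $|N(M)|\le n/20$, expose every edge except those between $M''=(N'\cup N(M'))\setminus(N(u)\cup N(M)\cup\{u,v\})$ and $W=[n]\setminus(\{u,v\}\cup N(u)\cup N(v)\cup N(M)\cup N(M'))$.
\item The reference vertices in $N(u)\cup N(M)$ now have known profiles. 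Hence the profiles of $M''$ into the parts $W\cap V_i$ are forced, while the edges realising them are still random.
\item \cref{lem:neighbourhood}, which bounds common neighbourhoods, gives $|M''|\ge n/60$ and $|W|\ge n/5$.
\item Anti-concentration then yields a factor $(960k/d)^{S/2}$ with $S\ge kn/1100$. For large $k$ this beats the $k$-independent $d^{O(n)}$ loss coming from the bijections and from $\binom{\binom n2}{dn/2}/g(\mathbf{d})$.
\end{enumerate}

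Two further points. First, \eqref{eq:hyp2} is not the right tool, because the forced profiles vary from vertex to vertex. The paper instead uses the pairwise form of \eqref{eq:hyp1}: $\binom{a_1}{b_1}\binom{a_2}{b_2}\le\frac{4}{\sqrt{b_2}}\binom{a_1+a_2}{b_1+b_2}$ for $b_2\le b_1$ and $b_i\le 3a_i/4$. Second, summing over $(d+1)^{kd}$ profile assignments is both unnecessary and harmful. In the correct exposure scheme the profiles are read off the exposed edges, and only bijections are enumerated. Your count $e^{kd\log(d+1)}$ already exceeds the gain $(O(k)/d)^{k|N(x)\setminus N(y)|/2}$ available from the at most $d$ forced vertices of $N(x)\setminus N(y)$.
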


We start from an auxiliary lemma. 

\begin{lemma}\label{lem:neighbourhood}
Let $\varepsilon>0$, $n^{-1/2+\varepsilon}\leq d=d(n)\leq n/2$, and let $\mathbf{G}_n \sim \mathcal{G}_{n,d}$. 
 For every pair of vertices $u,v\in[n]$, let $M_{u,v}\subset N(u)$ and $M'_{u,v}\subset N(v)\setminus N(u)$ be sets of size $\lfloor n/(20d)\rfloor$ chosen uniformly at random. Then the following events hold whp for any pair of vertices $u,v$ in $\mathbf{G}_n$:
\begin{enumerate}
    \item $|N(u)\cap N(v)| < 2d/3$;
    \item $|N(u,v)|< 4n/5$;
    \item $|N(M'_{u,v})\setminus(N(\{u,v\})\cup N(M_{u,v}))|>n/25$ whenever $d\leq n/20$.
\end{enumerate}
\end{lemma}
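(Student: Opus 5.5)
The plan is to reduce all three items to a uniform bound on codegrees. This bound is obtained from the sandwich theorem (\cref{thm:sandwich}). Throughout I read the hypothesis as $n^{1/2+\varepsilon}\le d\le n/2$, which is how the lemma is used, and I read $N(u,v)$ as $N(\{u,v\})=N(u)\cup N(v)$. Fix a small $\epsilon_1>0$ and couple $\mathcal{G}(n,(1-\epsilon_1)d/n)\subseteq\mathbf{G}_n\subseteq\mathcal{G}(n,(1+\epsilon_1)d/n)$ whp; this is legitimate since $d\gg\log n$ and $(1+\epsilon_1)d/n<1$. For a fixed pair $x\neq y$, the codegree $|N(x)\cap N(y)|$ in $\mathcal{G}(n,p)$ is $\mathrm{Bin}(n-2,p^2)$. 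Its mean $p^2n=\Theta(d^2/n)\ge n^{2\varepsilon}$ grows polynomially, so Chernoff's bounds \eqref{eq:chern_lb}, \eqref{eq:chern_ub} give failure probability $\exp(-n^{\Omega(\varepsilon)})$, and a union bound over all $n^2$ pairs applies. Since codegrees are monotone under adding edges, the upper bound transfers from $\mathcal{G}(n,p_+)$ and the lower bound from $\mathcal{G}(n,p_-)$. This yields, whp and simultaneously for all pairs $x\neq y$,
\[
(1-\epsilon_1)^3\frac{d^2}{n}\le |N(x)\cap N(y)|\le (1+\epsilon_1)^3\frac{d^2}{n}.
\]

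Items 1 and 2 are then immediate. For item 1, the upper bound is at most $(1+\epsilon_1)^3d/2<2d/3$ because $d\le n/2$. For item 2, write $x=d/n\le 1/2$. Then $|N(u)\cup N(v)|=2d-|N(u)\cap N(v)|\le n\bigl(2x-(1-\epsilon_1)^3x^2\bigr)$. Since $2x-x^2\le 3/4$ on $[0,1/2]$, choosing $\epsilon_1$ small gives the bound $<4n/5$.

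Item 3 is where the real work lies, and it is deterministic given the codegree bound; the randomness of $M,M'$ plays no role. Write $A=N(\{u,v\})\cup N(M_{u,v})$ and $M'=M'_{u,v}$, and note $|M_{u,v}|,|M'|\le n/(20d)$. Any $w\in M'$ satisfies
\[
|N(w)\cap A|\le |N(w)\cap N(u)|+|N(w)\cap N(v)|+\sum_{z\in M_{u,v}}|N(w)\cap N(z)|\le(1+\epsilon_1)^3\frac{d^2}{n}\Bigl(2+\frac{n}{20d}\Bigr).
\]
When $d\le n/20$ this is at most roughly $(0.1+0.05)d\cdot(1+\epsilon_1)^3\approx 0.15d$. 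By inclusion–exclusion,
\[
|N(M')\setminus A|\ge\sum_{w\in M'}\bigl(d-|N(w)\cap A|\bigr)-\sum_{\{w,w'\}\subseteq M'}|N(w)\cap N(w')|.
\]
The first sum is at least $|M'|\cdot 0.85d\approx 0.85\cdot n/20=0.0425n$, up to the floor in $|M'|$. The pair term is at most $\binom{|M'|}{2}(1+\epsilon_1)^3 d^2/n\le (1+\epsilon_1)^3 n/800\approx 0.00125n$. Together these give roughly $0.041n>n/25$.

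The main obstacle is that this final inequality is numerically tight. I would therefore have to control several small losses carefully, using $\epsilon_1$ small enough and the bound from item 1 in its sharp form $(1+o(1))d^2/n$. The first loss is the floor in $\lfloor n/(20d)\rfloor$; when $d$ is close to $n/20$ this makes $|M'|$ a small constant, so its relative effect is not negligible. The second loss is the exact constant in the pairwise overlap term. If the constants do not close, my fallback is to bound $|N(w)\cap A|$ more sharply: I would count $N(w)\cap N(M_{u,v})$ via the second-neighbourhood expansion of the set $M_{u,v}\cup\{u,v\}$ rather than via a union bound over codegrees.
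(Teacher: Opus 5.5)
Your route is the paper's: sandwich $\mathbf{G}_n$ between $\mathcal{G}(n,p_\pm)$, obtain uniform codegree bounds from Chernoff and a union bound over pairs, then treat item~3 deterministically. There are two cosmetic differences. The paper proves item~2 by dominating $|N(u)\cup N(v)|$ with $\mathrm{Bin}(n-1,2p_+(1-p_+/2))$ rather than using a lower codegree bound. In item~3 it charges every $w\in M'$ with $n/(10d)+1$ codegrees rather than using Bonferroni. Items~1 and~2 are fine as you have them.

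The gap is in item~3. It is not a matter of sharper constants, so your fallback cannot repair it: with $|M'_{u,v}|=\lfloor n/(20d)\rfloor$ the claim is false. For $n/40<d\le n/25$ one has $\lfloor n/(20d)\rfloor=1$, hence $|N(M'_{u,v})|\le d\le n/25$ whatever the overlaps are. The paper's proof silently computes with $\lceil n/(20d)\rceil$ in its final display, and with that change your own inclusion--exclusion closes. Write $x=n/(20d)\ge 1$ and $m=\lceil x\rceil=|M|=|M'|$, so that $d^2/n=d/(20x)$. Your estimate then becomes
\[
|N(M')\setminus A|\ \ge\ \frac{n}{20}\cdot\frac{m}{x}\Bigl(1-(1+\epsilon_1)^3\,\frac{3(m+1)}{40x}\Bigr).
\]
Its derivative in $x$ has the sign of $(1+\epsilon_1)^3\frac{3(m+1)}{20x}-1$, which is negative on $(m-1,m]$ because $m<x+1$ and $x\ge 1$. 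So the minimum is at $x=m$, and the bound is at least $\frac{n}{20}\bigl(1-0.15(1+\epsilon_1)^3\bigr)>\frac{n}{25}$. Subtracting $|M'|=o(n)$, in case $N(\cdot)$ excludes the set itself, costs nothing.

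Alternatively, keep the floor and weaken $n/25$ to $n/60$, which is all the proof of \cref{thm:dense-full} uses. Then $m\ge x/2$, and the same estimate gives roughly $0.021n$.

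Separately, your step $|N(w)\cap N(u)|\le(1+\epsilon_1)^3d^2/n$ needs $w\ne u$. This is not automatic, because $u\in N(v)\setminus N(u)$ whenever $u\sim v$, and for bounded $m$ losing that vertex of $M'$ breaks the bound. $M'$ should be drawn from $N(v)\setminus(N(u)\cup\{u\})$, as the set $N'$ is in the proof of \cref{thm:dense-full}.
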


\begin{proof}
Let $p_- = 0.9 d / (n-1)$ and $p_+ = 1.1 d / (n-1)$. \cref{thm:sandwich} implies that there exists a coupling between two copies of the binomial random graph and a random $d$-regular graph such that whp $\mathcal{G}(n,p_-) \subseteq \mathcal{G}_{n,d} \subseteq \mathcal{G}(n,p_+)$. Since we aim for a statement that holds whp, we may use this coupling to obtain the first two bounds. It is sufficient to show that for a fixed pair of vertices $u,v$ any of the first two events from the assertion of the claim fails in both $G(n,p_-)$ and $G(n,p_+)$ with probability $o(n^2)$.

First we consider the probability that $N(u) \cap N(v)$ is large. In both binomial random graphs the number of shared neighbours is dominated by $\textrm{Bin}(n-1,p_+^2)$. Since $(n-1)p_+^2\leq 0.61d$, the Chernoff bound~\eqref{eq:chern_lb} implies that $|N(u)\cap N(v)|\geq 2d/3$ in $G(n,p_+)$ with probability $\exp(-\Omega(d^2/n))$.

Similarly $N(u)\cup N(v)$ is dominated by $\textrm{Bin}(n-1,2p_+(1-p_+/2))$. Since $2(n-1)p_+(1-p_+/2)\leq 0.798n$, the Chernoff bound~\eqref{eq:chern_lb} implies that $|N(u)\cup N(v)|\geq 4n/5$ in $\mathcal{G}(n,p_+)$ with probability $\exp(-\Omega(d))$.

Next, let $d<n/20$. By the same argument, we get that whp every pair of vertices has at most $\frac{1.2d^2}{n}$ common neighbours. Therefore,
$$
 |N(M'_{u,v})\setminus(N(\{u,v\})\cup N(M_{u,v}))|>\left\lceil\frac{n}{20d}\right\rceil\left(d-\left(\frac{n}{10d}+1\right)\frac{1.2d^2}{n}\right)>\frac{n}{25},
$$
which finishes the proof of the lemma.
\end{proof}

\begin{proof}[Proof of \cref{thm:dense-full}]
Due to the Expander Mixing Lemma and \cref{th:lambda}, whp between any set $N$ of size $\Omega(d)$ and any set $W$ of size $\Omega(n)$, there are $(1\pm o(1))|N||W|\frac{d}{n}$ edges. We denote the intersection of this event with the event from the assertion of \cref{lem:neighbourhood} by $\mathcal{E}$.

Suppose that $k$ is as large as needed, and fix any partition $[n] = V_1 \sqcup \ldots \sqcup V_k$ such that each part has size in the range $[n/3k,3n/k]$ as in the statement of the theorem.  For each $i\in [k]$, define $d_i: [n] \to \mathbb{Z}$ so that $d_i(w) = |V_i\cap N(w)|$. 
 Finally, for each vertex $u\in [n]$, we interpret $c_i(u)$ as the colour of $u$ after $i$ rounds of CR. 
 For our goal, it suffices to show that whp 
 no two vertices have the same value of $c_3(\cdot)$.

We proceed as follows. Fix a pair of vertices $u,v\in [n]$ and expose the neighbourhoods of $u$ and $v$.
Note that $c_3(u)=c_3(v)$ if and only if there exists a bijection $b:N(v)\mapsto N(u)$ such that for any $w\in N(v)$ we have $c_2(b(w))=c_2(w)$. Fix such a bijection (in $d!$ ways). Define $N':=N(v)\setminus(N(u)\cup\{u\})$.
Choose arbitrarily a set $M'\subset N'$ of $\lfloor n/(20d)\rfloor $ vertices from $N'$, and let $M=b^{-1}(M')\subset N(U)$. Expose all edges that touch $M\cup N'$. Let 
$$
M''=N'\cup N(M')\setminus(N(u)\cup N(M)\cup\{u,v\}).
$$
We extend the bijection $b$ to an injection $b:N(V)\cup N(M)\mapsto N(U)\cup N(M')$ such that, for every $w\in M$ and every $w'\in N(w)$, we get $c_1(b(w'))=c_1(w')$ and $b(w')\in N(b(w))$. The number of ways to define such an extension is at most $(d!)^{n/20d}$.

Let $W:=[n]\setminus (\{u,v\}\cup N(u)\cup N(v)\cup N(M)\cup N(M'))$, and $W_i:=W\cap V_i$. After exposing every edge except those between $M''$ and $W$, we can determine the values of $d_i(w)$ for every $w\in N(M)$ (as every neighbour of every vertex in $N(M)$ has been exposed). Therefore, the injection $b$ also identifies $d_i(w)$ for every $i\in[k]$ and every $w\in M''$. Let $S$ denote the number of pairs $(i,w)$ with $1\le i \le k$ and $w\in M''$ such that $|W_i|\ge n/(30k)$ and $|W_i|d/(2n) \le |N(w)\cap W_i| \le 3|W_i|d/(2n)$. The number of ways to choose the remaining neighbours of the vertices in $M''$ is 
$$
\prod_{i=1}^k \prod_{w\in M''} \binom{|W_i|}{d_i(w)}\le \left(\frac{960k}{d}\right)^{S/2} \binom{|W||M''|}{dn/2-m},
$$
where $m$ is the number of exposed edges.
Indeed for any positive integers $a_1,a_2,b_1,b_2$ with $b_1\ge b_2$ and $b_i \le 3a_i/4$ for $i=1,2$ we have by \cref{cor:binomial_product} that
\begin{align*}
\binom{a_1}{b_1}\binom{a_2}{b_2}& \le \frac{2}{3}\sqrt{\frac{(b_1+b_2)(a_1+a_2)a_1a_2}{(a_1+a_2)b_1(a_1-b_1)b_2(a_2-b_2)}}\binom{a_1+a_2}{b_1+b_2}\\
&\le\frac{8}{3}\sqrt{\frac{b_1+b_2}{b_1b_2}}\binom{a_1+a_2}{b_1+b_2}\le {\frac{4}{\sqrt{b_2}}}\binom{a_1+a_2}{b_1+b_2}.
\end{align*}

Let us show that the event $\mathcal{E}$ implies $S\ge kn/1100$. Indeed, this event implies that $|W|\geq n/5$ (if $d>n/20$, then $W=[n]\setminus (N(\{u,v\})\cup\{u,v\})$ and has size at least $n/5$ by the second assertion of \cref{lem:neighbourhood}; if $d\leq n/20$, then $|W|\geq n-2d-2(n/(20d))d>n/5$). Therefore, there are at least $n/6$ vertices in the union of $W_i$ such that $|W_i|\geq n/(30k)$. Thus, there are at least $(n/6)/(3n/k)=k/18$ such $W_i$. Fix such a $W_i$. Since $\mathcal{E}$ holds, any subset $\tilde N\subset M''$ of size $\Omega(n)$ sends $(1\pm o(1))|\tilde N||W_i|$ edges to $W_i$. Moreover, $|M''|\geq n/60$. Indeed, if $d>n/20$, then $M''=N'$ which has size at least $d/3>n/60$ by the first assertion of \cref{lem:neighbourhood}; if $d\leq n/20$, then $|M''|>n/25$ by the third assertion. The event $\mathcal{E}$ also implies that number of vertices in $M''$ that have less than $|W_i|\frac{d}{2n}$ or more than $|W_i|\frac{3d}{2n}$ edges in $W_i$, is $o(n)$. So, indeed $S>(1-o(1))(k/18)(n/60)>kn/1100$.

Using \eqref{eq:Gnd} and letting $g(\mathbf{d})=(d,\ldots,d)\in\mathbb{Z}^n$, the probability that there exists $u,v\in [n]$ and a partition $V_1\sqcup \ldots \sqcup V_k$ such that $c(u)=c(v)$ is then at most
\begin{multline*}
\mathbb{P}(\neg\mathcal{E})+\frac{1}{g(\mathbf{d})}n^2 2^{kn} d! \sum_{N'\subseteq [n]} \sum_{W\subseteq [n]\setminus N'} \sum_{m=0}^{dn/2} \binom{\binom{n}{2}-|N'||W|}{m}\left(\frac{960k}{d}\right)^{S/2} \binom{|W||N'|}{dn/2-m}\\
=o(1)+d^{\Theta(n)} \left(\frac{960k}{d}\right)^{kn/2200}=o(1),
\end{multline*}
when $k$ is sufficiently large. This completes the proof of \cref{thm:dense-full}.
\end{proof}

\section{Proof of \cref{thm:main_complete_refinement}: Sparse Case}
\label{sc:sparse-full}

Here we prove the following.

\begin{theorem}\label{thm:sparse-full}
There exists a universal constant $k$ such that the following holds. Let $d_0=d_0(k)$ be large enough, let $d_0\leq d\leq n^{10/17}$, and let $\mathbf{G}_n \sim \mathcal{G}_{n,d}$. 
Then whp: for every partition $[n] = V_1 \sqcup \ldots \sqcup V_k$ of the vertex set of $\mathbf{G}_n$ such that for any $i \in [k]$, $n/3k \le |V_i| \le 3n/k$, after $\diam (\mathbf{G}_n)+1$ rounds of CR, there are only singleton colour classes.
\end{theorem}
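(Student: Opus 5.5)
We follow the template of the proof of \cref{thm:dense-full}, but with $\diam(\mathbf{G}_n)$ rounds of "growth" replacing the single neighbourhood exposure, and with switchings replacing asymptotic enumeration. Fix two vertices $u,v$. If after $\diam(\mathbf{G}_n)+1$ rounds $u$ and $v$ have the same colour, then there is a colour-preserving correspondence between the breadth-first trees rooted at $u$ and at $v$. Concretely, we choose $r$ as the largest integer with $d(d-1)^{r-1}\le cn$. \cref{lm:common_expansion_Thm3} then says that the spheres $S_r(u)$ and $S_r(v)$ both have size $\Theta(cn/d)\cdot\Theta(1)$, which is at least $c'n/d$. Equality of colours after $r+1$ further rounds forces an injection $b$ from (a large part of) $S_{r}(v)\setminus B_r(u)$ into $S_r(u)$ such that $c_1(b(w))=c_1(w)$. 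In particular, the degree profile $(d_1(w),\ldots,d_k(w))$ with respect to $V_1,\ldots,V_k$ is then determined by the image.

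We sum over the choices of $b$ tree-by-tree, at cost at most $(d!)^{O(n/d)}=e^{O(n\ln d)}$. We expect $r+1\le\diam(\mathbf{G}_n)+1$, since we need at most one extra round beyond reaching linear size; the case where $r=\diam-1$ needs checking. The core bound is as follows. Let $M''$ be the set of $\Theta(n/d)\cdot d=\Theta(cn)$ vertices in $S_{r+1}(v)$ reached through vertices not near $B_{r+1}(u)$. Condition on all edges except those between $M''$ and a set $W$ of $\Omega(n)$ unexplored vertices. Then the probability that every $w\in M''$ has its prescribed counts $d_i(w)$ into $W_i=W\cap V_i$ should be at most $(C k/d)^{\Omega(k\,|M''|)}$. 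This beats the union bound over $u,v$, over $b$, and over the $k^n$ partitions only if $|M''|$ is a small linear fraction and $k$ is large. There is a real tension here: the partition union bound $k^n$ versus the gain $d^{-\Omega(kcn)}$. It should work because the gain carries the factor $\ln d$, but we would rather avoid the $k^n$ factor by the same "weaker necessary event" trick as in \cref{lm:from_single_set-to_partition-gen}, namely union bounding over the sets $V_i$ only.

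The main obstacle is this multidimensional Erdős--Littlewood--Offord estimate for sparse $d$, where \cref{thm:newcount} gives no useful local information. We plan to prove it by a switching argument modelled on \cref{cl:large-switchings}, applied once per part. Order $M''=\{x_1,x_2,\ldots\}$, and for each $x_j$ and each part $i$ perform switchings
\[
\{x_j,y\},\{a,b\}\mapsto\{x_j,a\},\{y,b\}
\]
with $a\in W_i$ and $y\in W_{i'}$. These move one unit of $x_j$'s degree between parts $i'$ and $i$, and we show that the shifted classes $\Sigma(\delta)$, $|\delta|\le\ell^{0.3}$, together are a factor $\ell^{0.2}$ larger than the original class. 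Iterating over $\Theta(k)$ coordinates and all $x_j$ gives the exponent. We will need the error terms (the sets $\mathcal{X}$ of atypical graphs, and codegree corrections of order $d^2$) to be negligible. This is where the restriction $d\le n^{10/17}$ should enter: controlling the double-counting of backward switchings requires the codegree and edge-count deviations along the $r$-th sphere to be small compared with $n/d$. We would establish these codegree bounds first, via \cref{cl:regular-probability-bound}.
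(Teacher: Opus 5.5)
Your architecture matches the paper's. You use a colour-preserving correspondence along the BFS trees of $u$ and $v$ down to depth $r$, and a linear-size set one level deeper whose profiles with respect to the parts are forced by their images. You then apply a multidimensional Erd\H{o}s--Littlewood--Offord bound via one-vertex switchings repeated over $\Theta(k)$ parts. Finally, a union bound works because the extra factor $k$ in the exponent beats the $e^{O(cn\ln d)}$ cost of choosing the correspondence. Two of your worries are harmless. The $k^n$ partitions need no trick, since $d\ge d_0(k)$. The case $r=\diam(\mathbf{G}_n)-1$ is also fine: $B_r(u)\neq[n]$ forces $r\le\diam(\mathbf{G}_n)-1$, and the correspondence then gives $C_{\diam(\mathbf{G}_n)-r}$-equality at depth $r+1$, which contains $C_1$-equality.

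\textbf{The gap: your conditioning blocks your switching.} You condition on all edges except those between $M''$ and $W$, so what remains random is a bipartite graph between $M''$ and $W$ with both degree sequences fixed. In your switching $\{x_j,y\},\{a,b\}\mapsto\{x_j,a\},\{y,b\}$, the edge $\{a,b\}$ must then be an unexposed $M''$--$W$ edge. So $b\in M''$, and the switching also shifts $b$'s profile by one unit from $W_i$ to $W_{i'}$. Every switching therefore leaves the target event at two constrained vertices, and the one-vertex-at-a-time chain of \cref{cl:large-switchings} does not run. The fourth vertex $b$ must be unconstrained, which requires the edges inside the unexplored set $X$ to stay random. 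The paper does exactly this:
\begin{itemize}
\item it exposes only the balls, $N(U)$, $N(U')$ and $N(N(U'))$;
\item it fixes the $2k$ aggregate counts $h^j_{in}$, $h^j_{out}$ (edges inside $X\setminus V'_j$, and between $V'_j$ and $X\setminus V'_j$) at their most likely values, at a cost of $(dn)^{2k}$;
\item it switches with $a\in V'_j$ and $y,b\in X\setminus V'_j$, and compares $(|N_X(x)|-g_j(x))h^j_{out}$ with $g_j(x)h^j_{in}$ to decide in which direction to move $\delta$.
\end{itemize}

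\textbf{Quantitative corrections.}
\begin{itemize}
\item \emph{Window size.} The window must be $|\delta|\le(d/k)^{0.3}$, giving a gain of $(d/k)^{0.2}$ per vertex and part. So your $\ell$ has to be $d/k$, not a constant as in \cref{cl:large-switchings}. A constant gain yields only $e^{-O(k|M''|)}$, which cannot beat the up to $d^{|M''|}$ choices of the correspondence on $M''$ once $d$ is large. This in turn requires the prescribed counts to be of order $d/k$. That is why the paper first passes to subsets $\mathcal{N}'_j$ on which $g_j(x)\in[d/(10k),10d/k]$.
\item \emph{Where $n^{10/17}$ enters.} It does not come from codegrees along the $r$-th sphere or from backward switchings. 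It is what makes negligible the set $\mathcal{X}$ of graphs in which some $N(y)$ sends at least $n(d/k)^{0.3}$ edges into a set of size $n/(3k)$. Outside $\mathcal{X}$, invalid forward switchings are only an $O((k/d)^{0.7})$-fraction of $h^j_{out}\ge dn/(10k)$. Since the typical such count is about $d^2/k$, \cref{cl:regular-probability-bound} makes $\mathbb{P}(\mathcal{X})$ small only when $d^{1.7}\lesssim n$.
\item \emph{Disjointness of $M''$.} Requiring $M''$ to be ``not near $B_{r+1}(u)$'' is too strong. With your $r$, $|S_r(u)|$ can be of order $cn$ (not $\Theta(cn/d)$), so $B_{r+1}(u)$ may be almost everything. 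You only need $M''$ to avoid $B_r(u)\cup B_r(v)$ and $N(U')$, where $U'$ is the image of the chosen $\Theta(n/d)$ parents $U$. The second part of \cref{lm:common_expansion} is what gives $|N(U)\setminus(N(U')\cup B_r(u)\cup B_r(v))|>0.1\varepsilon n$ in the paper.
\end{itemize}
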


We will use two direct corollaries of \cref{lm:common_expansion_Thm3} from \cref{sc:sparse-many-large} in this proof. We first state these in \cref{sc:balls-sizes}, and then prove \cref{thm:sparse-full} in \cref{sc:sparse-full-final}.



\subsection{Sizes of Balls}
\label{sc:balls-sizes}

As in \cref{lm:common_expansion_Thm3}, let $c>0$ be small enough and $d_0$ be large enough. Let $d_0\leq d=o(n)$ and let $\mathbf{G}_n\sim\mathcal{G}_{n,d}$. The following two lemmas are direct corollaries of \cref{lm:common_expansion_Thm3}.

\begin{lemma}
\label{lm:balls_lower_bound}
Whp, for every $r$ such that $d(d-1)^{r-1}\leq c n$, the following holds
\begin{itemize}
\item for every vertex $u$, 
$$
|S_r(u)|\ge\left(1-100c-\frac{4\ln d}{d}\right)d(d-1)^{r-1};
$$
\item for every pair of vertices $u\neq v$, 
$$
|S_r(v)\setminus B_r(u)|\geq|S_r(\{u,v\})|-|B_r(u)|\geq\left(1-200c-\frac{8\ln d}{d}-\frac{1}{d-2}\right)d(d-1)^{r-1}.
$$
\end{itemize}
\end{lemma}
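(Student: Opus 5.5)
Both items follow from \cref{lm:common_expansion_Thm3}, applied on the whp event where its conclusion holds for every set $U$ with $|U|\le cn/d$ and every admissible $r$. The whole argument is deterministic once we are on that event.

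\emph{First item.} Take $U=\{u\}$, so $|U|=1\le cn/d$ since $d=o(n)$. The hypothesis $d(d-1)^{r-1}\le cn$ is exactly the admissibility condition $ud(d-1)^{r-1}\le cn$ with $u=1$. The lemma then gives the stated lower bound on $|S_r(u)|$ directly.

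\emph{Second item.} The first inequality is set-theoretic. Since $S_r(\{u,v\})\subseteq S_r(u)\cup S_r(v)$, we have
\[
S_r(\{u,v\})\setminus B_r(u)\subseteq S_r(v)\setminus B_r(u),
\]
and therefore $|S_r(v)\setminus B_r(u)|\ge |S_r(\{u,v\})|-|B_r(u)|$.

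For the second inequality we bound the two terms separately.
\begin{itemize}
\item \emph{Lower bound on $|S_r(\{u,v\})|$.} Apply \cref{lm:common_expansion_Thm3} with $U=\{u,v\}$, which needs $2d(d-1)^{r-1}\le cn$. This gives $|S_r(\{u,v\})|\ge(1-100c-4\ln d/d)\,2d(d-1)^{r-1}$.
\item \emph{Upper bound on $|B_r(u)|$.} Use the trivial bound
\[
|B_r(u)|\le 1+d\sum_{i=0}^{r-1}(d-1)^i=1+d\,\frac{(d-1)^r-1}{d-2}\le d(d-1)^{r-1}\Bigl(1+\frac{1}{d-2}\Bigr).
\]
\end{itemize}
Subtracting the second bound from the first leaves $d(d-1)^{r-1}\bigl(1-200c-8\ln d/d-1/(d-2)\bigr)$, which is the claimed bound.

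\emph{Main obstacle.} The admissibility condition for $U=\{u,v\}$ requires $2d(d-1)^{r-1}\le cn$, whereas the statement only assumes $d(d-1)^{r-1}\le cn$. I would handle this by invoking \cref{lm:common_expansion_Thm3} with the constant $2c$ in place of $c$; this is permitted because "$c$ small enough" is a free choice. The cost is that the error term becomes $200c$ per copy rather than $100c$.

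To recover exactly the constants in the statement, use that the first item holds for both $u$ and $v$ individually. Then combine: $|S_r(\{u,v\})|\ge |S_r(u)|+|S_r(v)|-|S_r(u)\cap S_r(v)|$. Bound the overlap by expansion of $\{u,v\}$ at radius $r-1$, which is admissible since $2d(d-1)^{r-2}\le cn$ whenever $d\ge3$. Alternatively, simply restate the lemma with $c$ rescaled.

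Either route costs only constant factors absorbed into "$c$ small enough", so the main point is bookkeeping of the admissible range rather than any new probabilistic input.
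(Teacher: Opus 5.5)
Your main line is the paper's proof. Item one is \cref{lm:common_expansion_Thm3} with $U=\{u\}$. Item two is the inclusion $S_r(\{u,v\})\setminus B_r(u)\subseteq S_r(v)\setminus B_r(u)$, followed by \cref{lm:common_expansion_Thm3} with $|U|=2$ and a trivial bound on $|B_r(u)|$. Your bound $|B_r(u)|\le d(d-1)^{r-1}\bigl(1+\frac{1}{d-2}\bigr)$ is the correct one; it is exactly what produces the $\frac{1}{d-2}$ term, whereas the paper's displayed $\sum_{i\le r-2}d(d-1)^i$ is misindexed. You are also right that applying the lemma to $\{u,v\}$ requires $2d(d-1)^{r-1}\le cn$, not just $d(d-1)^{r-1}\le cn$. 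The paper's one-line proof passes over this.

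The step that fails is your attempt to recover the exact constant $200c$ in the range $cn/2<d(d-1)^{r-1}\le cn$.

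\emph{The inclusion--exclusion inequality is backwards.} The bound $|S_r(\{u,v\})|\ge |S_r(u)|+|S_r(v)|-|S_r(u)\cap S_r(v)|$ has right-hand side $|S_r(u)\cup S_r(v)|$. But $S_r(\{u,v\})$ is in general a proper subset of that union, since it excludes every vertex of $S_r(u)$ lying within distance $r-1$ of $v$, and vice versa.

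\emph{Radius $r-1$ does not control the last step.} A lower bound on $|S_{r-1}(\{u,v\})|$ says nothing about collisions among the edges leaving $S_{r-1}(\{u,v\})$. Those collisions are precisely what govern $|S_r(v)\cap B_r(u)|$, and that information is only available from the radius-$r$ statement for $\{u,v\}$, which is the inadmissible one.

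\emph{The rescaling does not give $200c$.} Replacing $c$ by $2c$ is legitimate, but it yields $|S_r(\{u,v\})|\ge \bigl(2-400c-\frac{8\ln d}{d}\bigr)d(d-1)^{r-1}$, hence $400c$ rather than $200c$ in the final bound. Because the constant in the conclusion is explicit, this loss is not ``absorbed by choosing $c$ small''.

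So what your argument, and the paper's, actually establishes is one of two versions. Either the second item holds with $400c$ in place of $200c$ on the full range, or it holds as written under the stronger hypothesis $2d(d-1)^{r-1}\le cn$. Either version suffices for the only application, in \cref{sc:sparse-full-final}, where the bound is used only with an $O(\varepsilon)$ error term. The right resolution is therefore your third option: state the lemma in one of these forms rather than claim the constants as written.
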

\begin{proof}
The first assertion is just \cref{lm:common_expansion_Thm3} applied with a singleton $U=\{u\}$. The second follows from $|U|=2$ in \cref{lm:common_expansion_Thm3} together with the basic bound $|B_r(u)|\leq \sum_{i\leq r-2}d(d-1)^i$.
\end{proof}

\begin{lemma}
\label{lm:common_expansion}
Whp
\begin{itemize}
\item for every set $U$ of size $\frac{cn}{d}$, there are at least $(1-4\ln d/d-100c)cn$ vertices that have a neighbour in $U$;
\item for any two disjoint sets $U,V$ of size $cn/d$, the number of vertices that have neighbours both in $U$ and in $V$ is at most $|N(U)|/10$.
\end{itemize}
\end{lemma}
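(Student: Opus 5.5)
The first bullet follows directly from \cref{lm:common_expansion_Thm3}. The second needs a separate argument, and I plan to handle it with the same union-bound machinery used in \cref{sc:dense-many-large,sc:sparse-many-large}.

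\emph{First bullet.} Apply \cref{lm:common_expansion_Thm3} with $r=1$ to a set $U$ of size $u=cn/d$. The hypothesis $ud(d-1)^{0}=cn\le cn$ holds, so whp, simultaneously for all such $U$,
\[|S_1(U)|\ge (1-100c-4\ln d/d)cn .\]
Every vertex of $S_1(U)$ has a neighbour in $U$, which gives the first assertion.

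\emph{Second bullet: reduction.} Let $X=N(U)\cap N(V)$, the set of vertices with a neighbour in both $U$ and $V$. By the first bullet, $|N(U)|\ge cn/2$, so it suffices to show $|X|\le cn/20$ for all disjoint $U,V$. Suppose instead that $|X|\ge t:=cn/20$ and fix a subset $X_0\subseteq X$ of size $t$. For each $x\in X_0$, choose one edge to $U$ and one edge to $V$. This produces a subgraph $H$ with at most $2t$ edges. Here $H$ is a union of stars centred in $U$ or $V$, so vertex degrees may repeat, but every degree is at most $d$.

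\emph{Probability bound.} By \cref{cl:regular-probability-bound}, $\mathbb{P}(H\subseteq\mathbf{G}_n)\le(2d/n)^{2t}$. I will bound the number of choices as follows.
\begin{itemize}
\item[(i)] Pairs $(U,V)$: at most $\binom{n}{cn/d}^2\le (ed/c)^{2cn/d}=e^{O(cn\ln d/d)}$.
\item[(ii)] The set $X_0$: at most $\binom nt\le (20e/c)^{t}$.
\item[(iii)] The two edges for each $x\in X_0$: at most $(cn/d)^{2t}$.
\end{itemize}
Multiplying these together gives
\[e^{O(cn\ln d/d)}\Big(\tfrac{20e}{c}\Big)^{t}\Big(\tfrac{2d}{n}\cdot\tfrac{cn}{d}\Big)^{2t}=e^{o(n)}\,(80e\,c)^{t},\]
using $d\ge d_0$ large so that the first factor is $e^{o_d(1)n}$. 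This is $e^{-\Omega(n)}$ once $c<1/(80e^2)$, which is consistent with taking $c$ small. The whp conclusion then follows by the union bound.

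The one point that needs care is that \cref{cl:H-inclusion} requires the number of edges of $H$ to be at most half the total number of edges. This holds, since $2t=cn/10\le dn/4$.

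The main obstacle I expect is exactly this second bullet. Note that $|X|$ really is of order $c^2 n$ in typical graphs (the expected size is about $n(cn\cdot d/n\cdot 1/n)^2\cdot n$, i.e.\ of order $c^2n$). So the factor $1/10$ only works because $c$ is small, and the constants in the union bound must beat the entropy of choosing $X_0$, which the estimate above does.

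If the direct union bound turns out to be too lossy, the fallback is to use the sandwich \cref{thm:sandwich} for $d\ge C\log n$ together with a Chernoff bound. Counting instead $e(X_0,U)\ge t$ via the Expander Mixing Lemma and \cref{th:lambda} does not work: it only gives $e(X_0,U)\le c^2n/20+2\sqrt d\cdot cn/\sqrt{20d}$, where the error term dominates.
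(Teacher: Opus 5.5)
Your proof is correct. The first bullet is handled exactly as in the paper, but for the second bullet you take a different route.

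\emph{The paper's route.} The paper makes no new probabilistic estimate for the second bullet. It applies \cref{lm:common_expansion_Thm3} with $r=1$ to the union $U\cup V$, a set of size $2cn/d$, so in effect with $2c$ in place of $c$. It then uses inclusion--exclusion: $|N(U)\cap N(V)|\le |N(U)|+|N(V)|-|N(U\cup V)|\le 2cn-(1-O(c)-O(\ln d/d))\,2cn$. For $c$ small and $d$ large this is at most $\frac{1}{10}(1-4\ln d/d-100c)cn\le |N(U)|/10$. So both bullets follow from the single expansion lemma in two lines.

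\emph{Your route.} Your first-moment count via \cref{cl:regular-probability-bound} is self-contained for the second bullet; it uses the first bullet only to get $|N(U)|\ge cn/2$. The arithmetic $\binom{n}{t}(cn/d)^{2t}(2d/n)^{2t}\le (80ec)^t$ is right. It gives failure probability $e^{-\Omega(cn)}$, which is stronger than the $o(1)$ inherited from the expansion lemma. The cost is an explicit smallness condition on $c$, plus some care in absorbing $\binom{n}{cn/d}^2\le e^{(2cn/d)\ln(ed/c)}$. For bounded $d$ this factor is $e^{o_d(1)n}$, not $e^{o(n)}$ as you wrote. It is still beaten by $(80ec)^{cn/20}$ once $c$ is small and $d\ge d_0$.

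\emph{One correction.} You need $H$ to have exactly $2t$ edges, not at most $2t$, because fewer edges would make $(2d/n)^{|E(H)|}$ larger. This does hold: $X_0\subseteq S_1(U)\cap S_1(V)$ avoids $U\cup V$, so the $2t$ chosen edges are distinct. The at most $2cn/d$ vertices inside $U\cup V$ that have neighbours in both sets are negligible.
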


\begin{proof}
The first assertion follows immediately from \cref{lm:common_expansion_Thm3} applied with $r=1$. The second assertion follows as well since whp for any two disjoint sets $U$ and $V$, the number of vertices that have neighbours in both sets is at most
\begin{align*}
 |N(U)|+|N(V)|-|N(U\cup V)| &\leq 2d(cn/d)-(1-4\ln d/d-100c)2cn\\
 &=(4\ln d/d+100c)2cn\\
 &\leq\frac{1}{10}(1-4\ln d/d-100c)cn\leq|N(U)|/10.
\end{align*}
This finishes the proof of the lemma. 
\end{proof}

\subsection{Colour Refinement Run on a Vertex-coloured Random Graph}
\label{sc:sparse-full-final}

Let $d$ be large enough. In what follows we assume that properties from \cref{lm:balls_lower_bound} and \cref{lm:common_expansion} hold in $\mathbf{G}_n$ deterministically.  

Fix a partition $[n]=V_1\sqcup\ldots\sqcup V_k$ as in the statement of the theorem. Assign to every vertex $x$ the colour $C_0(x)$ that equals the index of the set $V_i$ to which $x$ belongs. Let $\mathbf{D}$ be the diameter of $\mathbf{G}_n$. Consider the output $C_t$ of $t:=\mathbf{D}+1$ rounds of CR at the coloured graph. We want to prove that $C(u)\neq C(v)$ for any two different vertices $u,v\in[n]$.
 
Fix two vertices $u\neq v$. Assume $C_t(u)=C_t(v)$. Then, for every neighbour $x$ of $v$, there exists a neighbour $y$ of $u$ such that $C_{t-1}(x)=C_{t-1}(y)$. More generally, we have the following.

\begin{claim}
\label{cl:colours_descent}
Let $r\in[t]$. For every vertex $a$ and every vertex $b$ such that $C_{t-r}(a)=C_{t-r}(b)$, and every neighbour $x$ of $a$, there exists a neighbour $y$ or $b$ such that $C_{t-r-1}(x)=C_{t-r-1}(y)$.
\end{claim}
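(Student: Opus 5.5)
This is immediate from the definition of colour refinement, so the plan is to unfold the recursive definition of the colours once. Recall that for every $s\geq 1$ the colour $C_s(w)$ is the pair $(C_{s-1}(w),C_{s-1}(N(w)))$, where $C_{s-1}(N(w))$ denotes the multiset of $C_{s-1}$-colours of the neighbours of $w$. Equality of two such pairs means equality of both coordinates.

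Fix $r$ with $t-r\geq 1$, so that the colouring $C_{t-r-1}$ is defined. Take vertices $a,b$ with $C_{t-r}(a)=C_{t-r}(b)$. Applying the definition with $s=t-r$ gives
\[
C_{t-r-1}(N(a))=C_{t-r-1}(N(b))
\]
as multisets. Now let $x$ be any neighbour of $a$. The colour $C_{t-r-1}(x)$ occurs in the multiset $C_{t-r-1}(N(a))$ with positive multiplicity. Hence it also occurs in $C_{t-r-1}(N(b))$, and so some neighbour $y$ of $b$ satisfies $C_{t-r-1}(y)=C_{t-r-1}(x)$, which is what the claim asserts.

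We also record two remarks for later use. First, equality of the multisets in fact gives a colour-preserving bijection between $N(a)$ and $N(b)$, not just the existence of a suitable $y$ for each $x$. Second, both regular graphs have degree $d$, so the multisets have the same size.

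The only point needing care is the index range, and this is the step I would check most carefully. For the endpoint $r=t$, the colouring $C_{-1}$ is not defined, so the statement should be read for $t-r\geq 1$ (equivalently, with the convention that $C_{-1}$ is constant, in which case the statement is trivial). With that reading there is no real obstacle. Iterating the claim starting from $C_t(u)=C_t(v)$ shows that every walk from $v$ of length at most $t-1$ can be matched by a walk from $u$ along which the vertices at each step have equal colours of decreasing order.
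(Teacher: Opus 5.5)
Your proposal is correct and is the same argument the paper relies on. The paper gives no separate proof; it simply observes that $C_t(u)=C_t(v)$ forces the neighbour multisets of $C_{t-1}$-colours to agree, which is exactly your unfolding of the definition.

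Your caveat is also right: at $r=t$ the colouring $C_{-1}$ is undefined. The claim should be read for $0\le r\le t-1$, and that range covers every use the paper makes of it.
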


Let $r=\lfloor\log_{d-1}(\varepsilon n)\rfloor$, where $\varepsilon>0$ is a small enough constant. Due to the property from the conclusion of \cref{lm:balls_lower_bound}, we have that 
\begin{align*}
|S_r(u)\setminus B_r(v)|&\geq\left(1-\frac{4}{d}-\frac{1}{d-1}-\frac{2}{\ln n}-O\left(\varepsilon\right)\right)d(d-1)^{r-1}\\
&>\frac{d}{(d-1)^2}\varepsilon n\left(1-\frac{4}{d}-\frac{1}{d-1}-\frac{2}{\ln n}-O\left(\varepsilon\right)\right)
>\frac{\varepsilon}{2d}\cdot n.
\end{align*}

Due to \cref{cl:colours_descent}, for every vertex $x\in S_1(u)\setminus B_1(v)$, there exists a vertex $f(x)\in B_1(v)$ such that $C_{t-1}(x)=C_{t-1}(f(x))$. Next, for every vertex $x\in S_2(u)\setminus B_2(v)$, let $\pi(x)\in S_1(u)\setminus B_1(v)$ be one of its ``parents''. We have that $C_{t-1}(\pi(x))=C_{t-1}(f(\pi(x))$. Therefore, by \cref{cl:colours_descent}, there  exists $f(x)\in N(f(\pi(x)))\subset B_2(v)$ such that $C_{t-2}(x)=C_{t-2}(f(x))$. We then define $f:B_r(u)\setminus B_r(v)\to B_r(v)$ by induction: for every $2\leq i\leq r$, assuming that $f$ has been defined on $B_{i-1}(u)\setminus B_{i-1}(v)$, and for every $x\in S_i(u)\setminus B_i(v)$, find its ``parent'' $\pi(x)\in S_{i-1}(u)\setminus B_{i-1}(v)$ and take $f(x)\in N(f(\pi(x))$ such that $C_{t-i}(x)=C_{t-i}(f(x))$.


Take $U\subset S_r(u)\setminus B_r(v)$ of size $\frac{\varepsilon n}{2d}$ and let $U':=f(U)\subset B_r(v)$. We have $|U'|\leq|U|$. Without loss of generality, we assume $|U'|=|U|$ (otherwise, we can extend $U'$ arbitrarily to keep the two sets disjoint, and the argument below will still work). Note that $B_r(u)=B_{r-1}(u)\cup N(S_{r-1}(u))$, that 
$$
|S_{r-1}(u)|\leq d(d-1)^{r-2}\leq\frac{d}{(d-1)^2}\varepsilon n<1.1\varepsilon\frac{n}{d},
$$
and that $U$ and $S_{r-1}$ are disjoint. 
The same facts hold for $B_r(v)$. In particular, 
$$
 |S_{r-1}(u)\cup S_{r-1}(v)|<3\varepsilon\frac{n}{d}.
$$
Therefore, by the conclusion of \cref{lm:common_expansion}, we have that 
\begin{align*}
|N(U)\setminus (N(U')\cup B_r(u)\cup B_r(v))| &>|N(U)|-\frac{4}{10}|N(U)|-|B_{r-1}(u)|-|B_{r-1}(v)|\\
&>\frac{6}{10}(1-(\ln 40)/10-\Theta(\varepsilon))\frac{1}{2}\varepsilon n-2\frac{d}{d-2}(d-1)^{r-1}\\
&>0.18\cdot \varepsilon n-
2\frac{d\varepsilon n}{(d-2)(d-1)}>0.1\cdot \varepsilon n.
\end{align*}
Let $\mathcal{N}$ be a subset of $N(U)\setminus (N(U')\cup B_r(u)\cup B_r(v))$ of size $\varepsilon n/10$.

We then extend $f$ to $\mathcal{N}$: Each vertex $x\in \mathcal{N}$ has $f(x)\in N(U')$. Note that the set 
$$
X:=[n]\setminus(B_r(u)\cup B_r(v)\cup\mathcal{N}\cup f(\mathcal{N}))
$$ 
has size at least
$$
 n-2d(d-1)^{r-1}-\frac{\varepsilon n}{2d}\cdot(2d)>n-2\frac{d}{d-1}\varepsilon\cdot n-\varepsilon\cdot n\geq n(1-4\varepsilon). 
$$
The set $X$ is partitioned into $k$ sets $X=V'_1\sqcup\ldots\sqcup V'_k$ so that $n(\frac{1}{3k}-\varepsilon)\leq |V'_i|\leq n\cdot\frac{3}{k}$. 

Due to \cref{cl:regular-probability-bound}, whp any set of size at most $\varepsilon n/3$ induces at most $\frac{1}{100}\varepsilon dn$ edges: 
$$
 {n\choose \varepsilon n/3}{\varepsilon^2 n^2/18\choose \frac{1}{100}\varepsilon dn}\left(\frac{2d}{n}\right)^{\frac{1}{100}\varepsilon dn}\leq\left(\left(\frac{3e}{\varepsilon}\right)^{100/3}\left(100e\varepsilon/9\right)^d\right)^{\frac{1}{100}\varepsilon n}=o(1)
$$
since $d$ is large and $\varepsilon$ is small enough. In particular, we may assume that there there are at most $\frac{1}{100}\varepsilon dn$ edges between $\mathcal{N}$ and $\mathcal{N}\cup f(\mathcal{N})\cup U$. Since $\mathcal{N}$ does not have any other neighbours in $B_r(u)\cup B_r(v)$, we get that there exists a subset $\mathcal{N}_0\subset\mathcal{N}$ of size $\frac{\varepsilon n}{50}$ such that each vertex in this set sends at least $\frac{3}{4}d$ edges to $X$.

Note that, for any vertex $x\in\mathcal{N}_0$, the equality $C_{t-r-1}(x)=C_{t-r-1}(f(x))$ implies $C_1(x)=C_1(f(x))$. Therefore, as soon as the sets $B_r(u),B_r(v)$ are exposed, the set $U$ is chosen, the sets $N(U),N(U'),N(N(U'))$ are exposed, and the set $\mathcal{N}_0$ is chosen, there should exist a function $f$ defined as above, that identifies the values of $|N_{V'_j}(x)|$ for every $x\in\mathcal{N}_0$ and $j\in[k]$.

Therefore, we run the following exploration process of the random graph. First, we expose $B_r(u),B_r(v)$ and then choose $U\subset S_r(u)\setminus B_r(v)$ of size $\frac{\varepsilon n}{2d}$ arbitrarily. We then expose $N(U)$, $N(U')$, and $N(N(U'))$. We choose $f$ on $(B_r(u)\setminus B_r(v))\cup\mathcal{N}$ in at most $d^{2\varepsilon n}$ ways, since 
$$
 |(B_r(u)\setminus B_r(v))\cup\mathcal{N}|\leq |B_r(u)|+|N(U)|\leq \frac{d}{d-2}(d-1)^{r}+\frac{\varepsilon n}{2}\leq \frac{d}{d-2}\varepsilon n+\frac{\varepsilon n}{2}<2\varepsilon n.
$$
Finally, we choose any set $\mathcal{N}_0\subset\mathcal{N}$ of size $\frac{\varepsilon n}{50}$ such that each vertex in this set sends at least $\frac{3}{4}d$ edges to $X$.

By the Expander Mixing Lemma and \cref{th:lambda}, whp between any two disjoint sets of size at least $n/(4k)$ and $n/2$, there are at least $dn/(10k)$ edges, and every set of size at least $n/2$ induces at least $dn/10$ edges. 

Recall that every $x\in\mathcal{N}_0$ has a prescribed number of neighbours $g_j(x)$ in the set $V'_j$.
By the Expander Mixing Lemma and \cref{th:lambda}, whp the number of edges between any two disjoint sets $U$ and $V$ of sizes $\Theta(n)$ equals $|U||V|d(1\pm\varepsilon)/n$. Therefore, for every set $V'_j$, there exists a subset $\mathcal{N}'_j\subset\mathcal{N}_0$ of size $\varepsilon n/100$ such that every $x\in\mathcal{N}'_j$ has 
$$
g_j(x)\in[d/(10k),10d/k].
$$ 

Let us estimate the probability that for every $j\in[k]$, every vertex from $\mathcal{N}'_j$ has $g_j(x)$ neighbours in $V'_j$. For every $j$, we order arbitrarily the vertices in $\mathcal{N}'_j$: $x^j_1,\ldots,x^j_t$, where
$$
 t=\varepsilon n/100.
$$
Let $\mathcal{E}$ be the event that, for every $j\in[k]$, every $x^j_i$ has $g_j(x^j_i)$ neighbours in $V'_j$. Let $h^j_{in}\geq dn/10$ and $h^j_{out}\geq dn/(10k)$ be integers such that 
$$
\mathbb{P}\left(\mathcal{E}\wedge \bigwedge_{j=1}^k \left\{|E(\mathbf{G}_n[X\setminus V'_j]|=h^j_{in},\,|E(\mathbf{G}_n[V'_j\times(X\setminus V'_j)]|=h^j_{out}\right\}\right)\text{ is maximum}.
$$ 
Let $\Sigma_0$ be the set of all $d$-regular graphs $G$ on $[n]$ satisfying $\mathcal{E}$ and such that, for all $j\in[k]$, $G[X\setminus V'_j]$ and $G[V'_j\times (X\setminus V'_j)]$ have exactly $h^j_{in}$ and $h^j_{out}$ edges, respectively\footnote{Sets $X$ and $V'_j$ depend on $G$: given a graph $G$, we expose the balls around $u$ and $v$, which identify these sets. In what follows, we will perform switching operations on $G$ that preserve the exposed balls and, therefore, sets $X$ and~$V'_j$.}. The following claim completes the proof of \cref{lm:from_single_set-to_partition-gen}.

\begin{claim}
$\mathbb{P}(\mathbf{G}_n\in\Sigma_0)\leq (k/d)^{\varepsilon nk/500}$.
\label{cl:large-switchings-2}
\end{claim}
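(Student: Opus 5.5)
The plan is to adapt the switching argument of \cref{cl:large-switchings} to the multidimensional setting: run the one-dimensional argument $k$ times, once for each part $V'_j$, each time on the $t=\varepsilon n/100$ vertices of $\mathcal{N}'_j$. I will argue by contradiction. Assume $\mathbb{P}(\mathbf{G}_n\in\Sigma_0)>(k/d)^{\varepsilon nk/500}$. This lower bound lets me absorb the exceptional set $\mathcal{X}$ as in \eqref{eq:X-small}. Here $\mathcal{X}$ is the set of graphs in which some vertex $y$ has an atypically large number of edges between $N(y)$ and $V'_j$, or the exposed balls contain too many edges; by \cref{cl:regular-probability-bound} it has probability $e^{-\omega(n)}$.

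For a fixed $j$ and a vertex $x=x^j_i\in\mathcal{N}'_j$ with prescribed value $g=g_j(x)\in[d/(10k),10d/k]$, I will use the switching $\{x,y\},\{w,z\}\mapsto\{x,w\},\{y,z\}$.
\begin{itemize}
\item For the $+1$ switching, take $y\in X\setminus V'_j$, $w\in V'_j$ and $z\in X\setminus V'_j$.
\item For the $-1$ switching, take $y\in V'_j$ and $w,z\in X\setminus V'_j$.
\end{itemize}
In both cases all vertices stay in $X$, so the exposed balls, $U$, $\mathcal{N}$, $f$ and the sets $V'_{j'}$ are preserved. The changed counts are $N_{V'_j}(x)$ (by $\pm1$), $h^j_{in}$ and $h^j_{out}$ (by $\mp1$), and the degree of $y$ into other classes $V'_{j'}$.

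That last side effect is the problem. To keep the other coordinates intact I will restrict $y$ and $z$ to a single class $V'_{j'}$ with $j'\neq j$; then $y$ and $z$ trade an edge within that class and no $g_{j'}$ constraint on $\mathcal{N}_0$ vertices is affected. The endpoints $y,z,w$ must also avoid $\mathcal{N}_0$, which is possible since $|\mathcal{N}_0|$ is only a small linear fraction. The corresponding $h^{j'}$ counts shift as well, so I will record these shifts in the definitions of the intermediate classes, as the last bullet in the definition of $\Sigma_i(\delta)$ did.

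Counting forward and backward switchings gives the following.
\begin{itemize}
\item The forward $+1$ count is about $(d_x-g)\,h_{out}$, where $d_x\ge 3d/4$ is the number of neighbours of $x$ in $X$.
\item The backward count is about $(g+1)h_{in}$.
\item Both $h$'s are $\Theta(dn)$ by the expander mixing facts listed just before the claim.
\end{itemize}
So the ratio $|\Sigma(\pm1)|/|\Sigma|$ is $1-O(1/g)$ in at least one direction. Iterating over $\delta=1,\dots,\Delta$ with $\Delta\approx\sqrt{g}\asymp\sqrt{d/k}$ keeps every ratio within a constant factor, since $\prod_{j\le\Delta}(1-O(j/g))=\Omega(1)$. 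Hence the union over $|\delta|\le\Delta$ has size at least $c\sqrt{d/k}\,|\Sigma|$.

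I will then define the chain $\Sigma_0\subseteq\Sigma_1\subseteq\cdots$, treating vertex by vertex the $x^j_i$ for $i\le t$ and $j\le k$. At each step the set grows by a factor $c\sqrt{d/k}\ge (d/k)^{1/3}$ once $d_0$ is large. The cumulative perturbation of the edge counts is $tk\Delta=O(\varepsilon n\sqrt{dk})$, which is $o(dn)$, so the errors $\mu,\nu$ stay negligible. The same vertex may lie in several $\mathcal{N}'_j$; I will handle this by perturbing only coordinate $j$ at step $(i,j)$ while keeping the other coordinates of already-treated vertices within the window $\Delta$. After $kt=\varepsilon nk/100$ steps this gives $|\Sigma_{\rm final}|\ge (d/k)^{\varepsilon nk/300}|\Sigma_0|$. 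Since $\Sigma_{\rm final}$ consists of graphs of total probability at most $1$, this contradicts the assumed lower bound and gives the exponent $\varepsilon nk/500$ with room to spare.

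The main obstacle is the bookkeeping of interactions between coordinates: one switching must change $N_{V'_j}(x)$ without altering the prescribed $g_{j'}$ values of the other vertices of $\mathcal{N}_0$, and must not leave the fixed exposed structure. Choosing $y,z$ inside a common third class $V'_{j'}$ disjoint from $\mathcal{N}_0$, and verifying that there are still $\Theta(d\cdot dn/k)$ such forward choices via the expander mixing bound for sets of size about $n/(3k)$, is where I expect the care to be needed.
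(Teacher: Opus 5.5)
\textbf{There is a genuine gap in how you handle interaction between coordinates.} Your overall scheme is the paper's: the same switching, a chain over the $kt$ pairs $(j,i)$, a window in $\delta$, and a contradiction with total probability at most $1$. The problem is the step where the coordinates of one vertex interact.

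\textbf{The side effect you name is harmless; the real one is on $x$ itself.} The vertices $y,w,z$ lie in $X$ and $X\cap\mathcal{N}=\emptyset$, so their degrees are not constrained at all. In your $+1$ switching, $x$ loses the neighbour $y\in V'_{j'}$, so $N_{V'_{j'}}(x)$ drops by one. In the $-1$ switching, $N_{V'_{j''}}(x)$ rises by one, where $w\in V'_{j''}$. Restricting $y,z$ to one class does not change this. In fact no switching of this type can move coordinate $j$ of $x$ alone: it fixes the neighbours of $x$ outside $X$, so it preserves $\sum_i N_{V'_i}(x)=|N_X(x)|$.

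\textbf{Why this breaks the chain.} Overlaps among the $\mathcal{N}'_j$ are forced, since each is half of $\mathcal{N}_0$ and the average multiplicity is $k/2$. So whenever $x^j_i\in\mathcal{N}'_{j'}$ with $j'$ untreated, the switched graph violates the exact constraint in coordinate $j'$ and is not in the target class. When $j'$ is already treated, later steps can push that coordinate out of its window. For a vertex in all $k$ sets, the $k$ constraints have only $k-1$ degrees of freedom. Your plan of perturbing only coordinate $j$ at step $(i,j)$, with one gain per pair, therefore cannot be carried out. The paper's sketch is silent on this too (its $y$ ranges over all of $N_X(x)\setminus V'_j$), but you assert a resolution, and the one you give is false.

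\textbf{The missing idea is a fixed reservoir coordinate per vertex.}
\begin{itemize}
\item For each $x\in\bigcup_j\mathcal{N}'_j$, fix one index $j^*=j^*(x)$ with $x\in\mathcal{N}'_{j^*}$.
\item Route every switching at $x$ through $V'_{j^*}$: take $y\in V'_{j^*}$ for $+1$ and $w\in V'_{j^*}$ for $-1$. Your restriction of $z$ to the same class is then fine.
\item Let coordinate $j^*$ absorb all compensating changes, and claim no gain from it. In the intermediate classes, replace its exact constraint by the relation that its deviation equals minus the sum of the deviations of the treated coordinates of $x$. Every routed switching preserves this relation.
\end{itemize}
Then $N_{V'_{j^*}}(x)$ stays within $(k-1)\Delta$ of $g_{j^*}(x)\ge d/(10k)$. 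For $+1$, the forward count is about $N_{V'_{j^*}}(x)\,e(V'_j,V'_{j^*})$ and the backward count about $(g_j+\delta+1)\,e(V'_{j^*},V'_{j^*})$. Both vertex factors are of order $d/k$, so your gain of $c\sqrt{d/k}$ per step goes through once these edge counts are tracked like the $h$'s. You lose at most one step per vertex of $\mathcal{N}_0$, so at least $(k-2)t$ productive steps remain. This gives $(d/k)^{(k-2)t/3}\ge(d/k)^{kt/5}=(d/k)^{\varepsilon nk/500}$ for $k\ge5$.

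\textbf{A smaller point about the drift.} The accumulated shift $tk\Delta$ of the edge counts must be compared with those counts divided by $\Delta$, not with $dn$. With $\Delta\asymp\sqrt{d/k}$ this costs a constant factor per step, depending on $k$ and $\varepsilon$. That is tolerable only because $d_0=d_0(k)$. It is also why the paper uses the smaller window $(d/k)^{0.3}$, where the loss is $1-o(1)$.
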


Indeed, \cref{cl:large-switchings-2} implies that $\mathbb{P}(\mathcal{E})\leq (dn)^{2k}(k/d)^{\varepsilon nk/500}$. Therefore, by the union bound  
$$
 \mathbb{P}(C_t(u)=C_t(v))\leq d^{2\varepsilon n}(dn)^{2k}(k/d)^{\varepsilon nk/500}=e^{-\Omega(kn)}
$$
when $k$ is large enough and $d\gg k$. The union bound over the choice of partition $V_1\sqcup\ldots\sqcup V_k$ and over all pairs of distinct vertices $u,v$ completes the proof of \cref{thm:sparse-full}.

The proof of \cref{cl:large-switchings-2} can basically be obtained by copying the proof of \cref{cl:large-switchings} verbatim. Nevertheless, we present the proof below in nearly complete detail.

\begin{proof}[Proof of \cref{cl:large-switchings-2}.]

Assume the opposite:  $\mathbb{P}(\mathbf{G}_n\in\Sigma_0)>(k/d)^{\varepsilon nk/500}$. Let $\mathcal{X}$ be the set of $d$-regular graphs $G$ on $[n]$ such that there exists a vertex $x$ and a set $V$ of size $n/(3k)$ with at least $n(d/k)^{0.3}$
edges between $N(x)$ and $V$. Then, due to \cref{cl:regular-probability-bound},
\begin{align}
 \mathbb{P}(\mathbf{G}_n\in\mathcal{X})&\leq n{n\choose n/(3k)}{3dn/k\choose n(d/k)^{0.3}}\left(\frac{2d}{n}\right)^{n(d/k)^{0.3}}\notag\\
 &\leq n{n\choose n/(3k)}\left(\frac{3e(d/k)^{0.7}\cdot 2d}{n}\right)^{n(d/k)^{0.3}}=o\left(e^{-n(d/k)^{0.3}}\right),
 \label{eq:X-small-2}
\end{align}
since $d\leq n^{10/17}$ and $k$ is large enough.

In the same way as in the proof of \cref{cl:large-switchings}, we define sets $\Sigma^j_{\ell}$, $j\in[k]$, $\ell\in[t]$: for $|\delta|\leq(d/k)^{0.3}$, we define $\Sigma^j_{\ell}(\delta)$
as the set of $d$-regular graphs $G$ on $[n]$ such that
\begin{itemize}
\item for every $i\geq j+1$, each vertex $x\in \mathcal{N}'_i$ has exactly $g_i(x)$ neighbours in $V'_i$,
\item for every $\ell+1\leq i\leq t$, the vertex $x_i^j$ has exactly $g_j(x_i^j)$  neighbours in $V'_j$,
\item $x_{\ell}^j$ has $g_j(x_{\ell}^j)+\delta$ neighbours in $V'_j$,
\item for every $i\geq j-1$, the number of neighbours of every vertex $x\in \mathcal{N}'_i$ in $V'_i$ satisfies $|N_{V'_i}(x)-g_i(x)|\leq(d/k)^{0.3}$;
\item for every $i\in[\ell-1]$, $|N_{V'_j}(x^j_i)-g_j(x^j_i)|\leq (d/k)^{0.3}$,
\item for every $i\in[k]$, the number of edges in $G[X\setminus V'_i]$ and $G[V'_i\times (X\setminus V'_i)]$, denoted by $h_{in}^i(j,\ell)$ and $h_{out}^i(j,\ell)$, respectively, satisfy:
$$
 \left|h_{in}^i(\ell,j)-h_{in}^i\right|\leq(d/k)^{0.3}((j-1)t+\ell),\quad
 \left|h_{out}^i(\ell,j)-h_{out}^i\right|\leq(d/k)^{0.3}((j-1)t+\ell).
$$ 
\end{itemize}
Set $\Sigma_{\ell}^j:=\bigcup_{|\delta|\leq(d/k)^{0.3}}\Sigma_{\ell}^j(\delta)$. The crucial observation is that, at every step, the bound on the difference of the number of edges in $X\setminus V'_i$ and $V'_i\times (X\setminus V'_i)$ with $h^i_{in}$ and $h^i_{out}$ is at most
$$
 (d/k)^{0.3}kt=O\left(\min_j\{h^j_{out},h^j_{in}\}\cdot k^{1.7}/d^{0.7}\right).
$$
In order to get a contradiction with our initial assumption, we prove that 
\begin{itemize}
    \item $|\Sigma_0|\leq(d/k)^{-0.2}|\Sigma^1_1|$,
    \item for every $j\in[k]$ and $\ell\in[t-1]$, $|\Sigma^j_{\ell}|\leq (d/k)^{-0.2}|\Sigma^j_{\ell+1}|$,
    \item for every $j\in[k-1]$, $|\Sigma^j_t|\leq (d/k)^{-0.2}|\Sigma^{j+1}_1|$.
\end{itemize} 
We prove this using switchings. For instance, in order to prove $|\Sigma^j_{\ell}|\leq (d/k)^{-0.2}|\Sigma^j_{\ell+1}|$, we take $G\in\Sigma^j_{\ell+1}(\delta)$ and consider a tuple of vertices $(y,u,v)$ such that 
\begin{itemize}
\item $y\notin V'_j$, $u\in V'_j$, $v\notin V'_j$,
\item and $\{x^j_{\ell+1},y\},\{u,v\}\in E(G)$, $\{x^j_{\ell+1},u\},\{y,v\}\notin E(G)$.
\end{itemize}
If we switch
\begin{equation}
 \{x^j_{\ell+1},y\},\{u,v\}\mapsto\{x^j_{\ell+1},u\},\{y,v\},
\label{eq:switch-def-3}
\end{equation}
we get a graph from $\Sigma^j_{\ell+1}(\delta+1)$. It gives
\begin{multline*}
 |\Sigma_{\ell+1}^j(\delta)|\cdot(|N_X(x^j_{\ell+1})|-(g_j(x^j_{\ell+1})+\delta))\cdot(h^j_{out}\pm O((d/k)^{0.3}kt))-|\mathcal{X}\cap\Sigma_{\ell+1}^j(\delta)|O(d^2n)\\
 =|\Sigma_{\ell+1}^j(\delta+1)|\cdot(g_j(x^j_{\ell+1})+\delta+1)\cdot(h^j_{in}\pm O((d/k)^{0.3}kt+d^2)).
\end{multline*}
In a similar way, we apply the switching operation~\eqref{eq:switch-def-3} to $G\in\Sigma_{\ell+1}^j(\delta)$ and get a graph from $\Sigma_{\ell+1}^j(\delta-1)$, where 
\begin{itemize}
\item $y\in V'_j$, $u,v\notin V'_j$,
\item and $\{x_{\ell+1}^j,y\},\{u,v\}\in E(G)$, $\{x_{\ell+1}^j,u\},\{y,v\}\notin E(G)$.
\end{itemize}
We get
\begin{multline*}
 |\Sigma_{\ell+1}^j(\delta)|\cdot(g_j(x^j_{\ell+1})+\delta)\cdot(h^j_{in}\pm O((d/k)^{0.3}kt+d^2))\\
 =|\Sigma_{\ell+1}^j(\delta-1)|\cdot(|N_X(x^j_{\ell+1})|-(g_j(x^j_{\ell+1})+\delta-1))\cdot(h^j_{out}\pm O((d/k)^{0.3}kt))-|\mathcal{X}\cap\Sigma_{\ell+1}^j(\delta-1)|O(d^2n).
\end{multline*}
If $(|N_X(x^j_{\ell+1})|-g_j(x^j_{\ell+1}))h^j_{out}>g_j(x^j_{\ell+1}) h^j_{in}$, then $\frac{|\Sigma_{\ell+1}^j(\delta+1)|}{|\Sigma_{\ell+1}^j(\delta)|}>1-O(k^{1.7}/d^{0.7})$, implying 
\begin{equation}
|\Sigma^j_{\ell+1}|/|\Sigma^j_{\ell}|\geq(1-O(k^{1.7}/d^{0.7}))^{(d/k)^{0.3}}(d/k)^{0.3}>(d/k)^{0.2}.
\label{eq:final-sigmas-blowup}
\end{equation}
Otherwise, $\frac{|\Sigma_{\ell+1}^j(\delta-1)|}{|\Sigma_{\ell+1}^j(\delta)|}>1-O(k^{1.7}/d^{0.7})$, implying~\eqref{eq:final-sigmas-blowup}, as well. The proof of \cref{cl:large-switchings-2} is now complete.
\end{proof}

\section{Proof of~\cref{lm:common_expansion_Thm3}}
\label{sc:expansion-lemma-proof}

Let us fix $U\subseteq [n]$ of size  $u = |U| \leq\frac{cn}{d}$. (We may assume that $d\leq cn$ as the statement is vacuously true otherwise.) 
Let $c\geq c':=ud/n$. For $i\geq 1$, denote 
$$
\varepsilon_i:=\frac{2\ln d}{2^{i-1}\cdot d}+50c'(d-1)^{i-1}.
$$
We start from the following claim.

\begin{claim}
For every positive integer $r$ such that $ud(d-1)^{r-1} \le cn$,
$$
 \mathbb{P}\biggl(|S_r(U)|\geq(1-3\ln d/d-100c)ud(d-1)^{r-1}\biggr)\geq 1-\sum_{i=1}^r\left(\frac{2ec'(d-1)^{i-1}}{\varepsilon_i}\right)^{\varepsilon_i d(d-1)^{i-1}u}.
$$
\end{claim}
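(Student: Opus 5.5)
We expose the graph by breadth-first search from $U$, one sphere at a time, and union-bound over the failure probabilities of the individual layers. Write $s_i:=|S_i(U)|$ and $E_i$ for the edges of $\mathbf{G}_n$ touching $B_{i-1}(U)$, which are exposed after the $i$-th step.

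The natural per-step quantity is the number of ``bad'' half-edges leaving $S_{i-1}(U)$ outward. A half-edge is bad if it does not land on a fresh vertex of $S_i(U)$: it lands inside $B_{i-1}(U)$, or on a vertex already hit by another half-edge. Let $Y_i$ be the number of bad half-edges at step $i$. Then
$$
s_i\ \ge\ (d-1)s_{i-1}-Y_i \qquad (i\ge2), \qquad s_1\ge du-Y_1 .
$$
We aim to show that with probability at least $1-\left(\frac{2ec'(d-1)^{i-1}}{\varepsilon_i}\right)^{\varepsilon_i d(d-1)^{i-1}u}$ we have $Y_i\le \varepsilon_i d(d-1)^{i-1}u$. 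Summing these failure probabilities over $i\le r$ gives exactly the sum in the claim.

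On the intersection of the good events we get $s_r\ge ud(d-1)^{r-1}\bigl(1-\sum_{i\le r}\varepsilon_i\bigr)$, after dividing out the factors $(d-1)^{r-i}$ and using that $s_{i-1}$ is already near its upper bound. Here
$$
\sum_i \varepsilon_i\le \frac{4\ln d}{d}+50c'\sum_{i\le r}(d-1)^{i-1}.
$$
The geometric sum is at most $(1+O(1/d))c'(d-1)^{r-1}\le(1+O(1/d))c$, using $ud(d-1)^{r-1}\le cn$. This is where the precise constants $3\ln d/d$ and $100c$ must come out. The $\ln d$ term arises because $\sum 2^{-(i-1)}\le 2$, so it contributes $4\ln d/d$; to reach $3\ln d/d$ I would sharpen the first step, or absorb the slack using $d\ge d_0$. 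I would not fuss over this, since the lemma only needs $4\ln d/d$ after the union bound.

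The probability bound for $Y_i$ is the main step, and we get it from \cref{cl:regular-probability-bound}. Consider the half-edges emanating from $S_{i-1}(U)$, at most $N_i:=d(d-1)^{i-2}u$ many ($du$ when $i=1$). The event $Y_i\ge m:=\varepsilon_i d(d-1)^{i-1}u$ implies the existence of a set of $m$ of these half-edges, each of which is an edge to a target in $B_i(U)$. That target set has size at most $2ud(d-1)^{i-1}\le 2c'(d-1)^{i-1}n$. The trouble is that these events are not independent given the past. So instead we bound directly the probability that a fixed graph $H$ with $m$ specified edges is contained in $\mathbf{G}_n$. We union over choices of the $m$ half-edges ($\binom{N_i}{m}$ ways) and over their endpoints in the small target set ($(2c'(d-1)^{i-1}n)^m$ ways), multiplied by $(2d/n)^m$. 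This gives roughly
$$
\left(\frac{eN_i}{m}\right)^m \bigl(2c'(d-1)^{i-1}\bigr)^m\cdot\frac{(2d)^m}{d^m},
$$
which after cleaning up is of order $\left(\frac{2ec'(d-1)^{i-1}}{\varepsilon_i}\right)^{m}$ up to constant factors. Those factors should be absorbed by the bookkeeping (e.g.\ $N_i/m=1/(\varepsilon_i(d-1))$ cancels a factor of $d$).

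The subtle point, which I expect to be the main obstacle, is that the targets are themselves random, since they are the vertices of the $B_i(U)$ just being built. To handle this, I would phrase the bad event combinatorially as the existence of a subgraph $H$: a tree-like BFS structure from $U$ together with $m$ extra edges closing cycles or collisions. I would then apply \cref{cl:H-inclusion} to the whole $H$, with the tree edges counted with their own $(2d/n)$ factors against the $n$ choices of their new endpoints, so that they contribute only $d^{\text{tree edges}}$, which matches the count of BFS trees. The extra $m$ edges each gain the factor $|B_i|\cdot 2d/n$. The other conditions are checkable: $m'\le m/2$ and the max degree bound in \cref{cl:H-inclusion} hold because $|E(H)|\le cn\ll dn/4$.

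Finally, $\varepsilon_i\ge 2c'(d-1)^{i-1}\cdot 25$ makes each base at most $e/25<1$. The term $2\ln d/(2^{i-1}d)$ ensures the exponent $\varepsilon_i d(d-1)^{i-1}u\ge \ln d\cdot u(d-1)^{i-1}/2^{i-2}$ is large. This keeps the failure probabilities summable over all $U$ later.
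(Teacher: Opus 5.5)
\textbf{There is a genuine gap, in the key step.} Your architecture matches the paper's: BFS layer by layer, a tail bound on the number $Y_i$ of wasted half-edges in layer $i$, then a union over $i\le r$. But the step that carries the content, $\mathbb{P}(Y_i\ge m)\le\bigl(2ec'(d-1)^{i-1}/\varepsilon_i\bigr)^m$, is not established, and the route you sketch for it fails.

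Your first count treats $S_{i-1}(U)$ and $B_i(U)$ as fixed sets. To repair this you union over whole subgraphs $H$, each a BFS forest from $U$ plus the $m$ extra edges, charging every tree edge $n\cdot(2d/n)=2d$. In a union bound nothing cancels these charges; there is no ``count of BFS trees'' for them to match.
\begin{itemize}
\item For a vertex $x$ with $c_x$ children there are at most $n^{c_x}/c_x!$ choices of children, so summing $(2d/n)^{c_x}$ over them gives $(2d)^{c_x}/c_x!$, about $(2e)^{d-1}$ when $c_x=d-1$.
\item The forest alone therefore contributes a factor $e^{\Theta(T)}$, where $T\approx ud(d-1)^{i-1}$ is the number of tree edges.
\item The $m=\varepsilon_iT$ extra edges gain only $\exp\bigl(-\varepsilon_iT\ln\frac{\varepsilon_i}{2ec'(d-1)^{i-1}}\bigr)$.
\item For large $d$ and a layer with $c'(d-1)^{i-1}=\Theta(c)$ (e.g.\ $|U|=cn/d$, $i=1$), this gain is only $e^{-O(c)T}$ with $c$ small, so your bound exceeds $1$.
\end{itemize}
What breaks is the crude estimate of \cref{cl:regular-probability-bound}. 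A static subgraph count could only survive with the exact falling factorials $d(d-1)\cdots(d-d_i'+1)$ of \cref{cl:H-inclusion}, which cancel the $c_x!$. Even then, $H$ would also have to contain the collision edges of all earlier layers. Otherwise every earlier vertex with $c_x<d-1$ children leaves an uncompensated factor $\binom{d-1}{c_x}$.

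\textbf{How the paper handles it.} The lack of independence given the past is not actually an obstacle: a uniform bound on conditional probabilities suffices, and this is how the paper argues.
\begin{itemize}
\item It exposes the outward edges of $S^1_{r-1}(U)$ one at a time.
\item For the current edge $\{x_j,v\}$ it uses the switching $\{x_j,v\},\{z,w\}\mapsto\{x_j,z\},\{v,w\}$, with $z,w$ outside the already reached set $S'\cup S_{r-1}(U)$. This shows that, given everything exposed so far, $v\in S'\cup S_{r-1}(U)$ has probability at most $2c'(d-1)^{r-1}$.
\item The randomness of the target set is harmless: it is determined by the history, and its size is deterministically at most $ud(d-1)^{r-2}+ud(d-1)^{r-1}$.
\item The number of bad edges is then stochastically dominated by $\mathrm{Bin}\bigl(d(d-1)^{r-1}u,\,2c'(d-1)^{r-1}\bigr)$. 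The bound $\binom{N}{\varepsilon N}p^{\varepsilon N}\le(ep/\varepsilon)^{\varepsilon N}$ then gives exactly the base in the claim.
\end{itemize}
Within your own framework, one could instead condition on the exposed ball. The unexposed part is then uniform with the residual degree sequence, and \cref{cl:H-inclusion} could be applied to it with $H$ formed only by the layer-$i$ bad edges and the edges they collide with. No forest would need to be counted.

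\textbf{Smaller slips.}
\begin{itemize}
\item The outward half-edges of $S_{i-1}(U)$ number up to $d(d-1)^{i-1}u$, not $d(d-1)^{i-2}u$; the latter is the vertex count.
\item Your count gives base about $4ec'(d-1)^{i-1}/\varepsilon_i$. A constant factor in the base is a factor $2^m$ in the bound, so it cannot be absorbed into the stated inequality.
\item The recursion $s_i\ge(d-1)s_{i-1}-Y_i$ fails, because a vertex of $S_{i-1}(U)$ reached twice has fewer than $d-1$ outward half-edges. This is why the paper tracks $S^1_i(U)$ and subtracts $2\varepsilon_i d(d-1)^{i-1}u$ per layer.
\end{itemize}
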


\begin{proof}
Fix a positive integer $r$ such that $d(d-1)^{r-1}u \le cn$. Assuming that all the edges in $B_{r-1}(U)\setminus E(\mathbf{G}_n[S_{r-1}(U)])$ have been explored, let as explore the next layer $S_r(U)$. For $i\leq r$, let $S^1_i(U)$ be the set of vertices in $S_i(U)$ that have degree 1 in $B_i(U)\setminus E(\mathbf{G}_n[S_i(U)])$. 

We assume, by induction, that
$$
 |S^1_{r-1}(U)|\geq d(d-1)^{r-2}u\left(1-2\sum_{i=1}^{r-1}\varepsilon_i\right)
$$ 
whenever $r\geq 2$ (if $r=1$, we do not impose any assumption). Enumerate the vertices so that $S^1_{r-1}(U)=\{x_1,x_2,\ldots\}$. We order arbitrarily the edges that we have to explore from $x_j$'s outside of $B_{r-1}(U)$. Assume that we have exposed all adjacencies for $x_1,\ldots,x_{j-1}$, and the first $\ell-1$ edges of $x_j$. Let $S' \subseteq S_r(U)$ be the set of neighbours of $\{x_1,\ldots,x_j\}$ outside of $B_{r-1}(U)$ that have been explored so far. 


Let us show that the $\ell$-th edge $\{x_j,v\}$ touches $S'\cup S_{r-1}(U)$ with probability at most $2c'(d-1)^{r-1}$, which can be done using switchings. Consider the family of $d$-regular graphs on the vertex set $[n]$ that contain the explored edges. We partition this family into two subfamilies, $\Sigma_0$ and $\Sigma_1$. 
 In $\Sigma_0$ the $\ell$-th edge $\{x_j, v\}$ {\it does not} touch $S'\cup S_{r-1}(U)$ whereas in $\Sigma_1$ it does. For every $G\in\Sigma_1$ (with the $\ell$-th edge $\{x_j,v\}$ identified) and any pair of vertices $(z,w)$ such that
 \begin{itemize}
 \item $\{z,w\}\in E(G)$, $\{x_j,z\},\{v,w\}\notin E(G)$,
 \item $z,w\notin S'\cup S_{r-1}(U)$,
 \end{itemize}
 we may switch 
 $$
 \{x_j,v\},\{z,w\}\mapsto\{x_j,z\},\{v,w\}
 $$ 
 to get a graph from $\Sigma_0$ (with the $\ell$-th edge $\{x_j,z\}$ identified). To compare the sizes of $\Sigma_1$ and $\Sigma_0$, we need to estimate the number of triples $(G, z, w)$, $G \in \Sigma_1$, and the number of triples $(G, v, w)$, $G \in \Sigma_0$. For a given $G \in \Sigma_1$, the number of pairs $(z,w)$ is at least 
\begin{align*}
 d(n-|B_{r-1}(U)|-|S'|)&-d(|B_{r-1}(U)|+|S'|)-2d^2=d(n-2|B_{r-1}(U)|-2|S'|-2d)\\
 &\geq d\left(n-2u\frac{d}{d-2}(d-1)^{r-1}-2ud(d-1)^{r-1}-2d\right).
\end{align*}
On the other hand, for a given $G \in \Sigma_0$, the number of pairs $(v,w)$ is clearly at most 
$$
(|S_{r-1}(U)|+|S'|)d\leq d(ud(d-1)^{r-2}+ud(d-1)^{r-1}).
$$ 
Thus, we get 
$$
|\Sigma_1| (n-2ud(d-1)^{r-1}-2ud(d-1)^{r-1}/(d-2)-2d) \leq |\Sigma_0| (ud(d-1)^{r-2}+ud(d-1)^{r-1}),
$$
implying the desired probability bound
\begin{align*}
 \frac{|\Sigma_1|}{|\Sigma_1|+|\Sigma_0|}\leq\frac{|\Sigma_1|}{|\Sigma_0|}&\leq\frac{ud(d-1)^{r-2}+ud(d-1)^{r-1}}{n-2ud(d-1)^{r-1}-2ud(d-1)^{r-1}/(d-2)-2d}\\
 &=\frac{ud(d-1)^{r-1}(1+1/(d-1))}{n(1-2d(d-1)^{r}u/(n(d-2))-2d/n)}\\
 &\leq\left(1+\frac{1}{d-1}+3\frac{d(d-1)^{r-1} u}{n}\right)\frac{d(d-1)^{r-1} u}{n(1-2c)}<2c'(d-1)^{r-1},
\end{align*}
as needed.

Therefore, the probability that the number of edges explored in this way that ``do not lead to a new vertex'' (i.e.\ do not fall into $S^1_r(U)$) is at least $\varepsilon_r d(d-1)^{r-1}|U|$ is dominated by
\begin{align*}
\mathbb{P}\biggl(\mathrm{Bin}\left[d(d-1)^{r-1}u,2c'(d-1)^{r-1}\right]&>\varepsilon_r d(d-1)^{r-1}u\biggr)\\
 &\leq{d(d-1)^{r-1}u\choose \varepsilon_r d(d-1)^{r-1}u}(2c'(d-1)^{r-1})^{\varepsilon_r d(d-1)^{r-1}u}\\
 &\leq
 \left(\frac{2ec'(d-1)^{r-1}}{\varepsilon_r}\right)^{\varepsilon_r d(d-1)^{r-1}u}.
\end{align*}
If the second endpoint of an explored edge lands in $S^1_{r-1}(U)$ as well, it immediately reveals some edge that have not been explored yet. Therefore, the above event implies that
\begin{align*}
|S_r^1(U)|&\geq (d-1)\left(d(d-1)^{r-2}u\left(1-2\sum_{i=1}^{r-1}\varepsilon_i\right)\right)-2\varepsilon_r d(d-1)^{r-1}u\\
 &=d(d-1)^{r-1}u\left(1-2\sum_{i=1}^r\varepsilon_i\right)\geq \left(1-\frac{4\ln d}{d}-100c'(d-1)^{r-1}\right)ud(d-1)^{r-1}.
\end{align*}
By induction, we get that the latter event holds with probability at least
$$
1-\sum_{i=1}^r\left(\frac{2ec'(d-1)^{i-1}}{\varepsilon_i}\right)^{\varepsilon_i d(d-1)^{i-1}u},
$$
completing the proof of the claim.
\end{proof}

In order to complete the proof of the lemma, it suffices to apply the union bound over all $u\leq\frac{cn}{d}$, all $r$ such that $ud(d-1)^{r-1}<cn$, and all $U\subseteq [n]$ of size $u$:
\begin{multline*}
 \sum_{u<cn/d}\sum_r{n\choose u}\sum_{i=1}^{r}\left(\frac{2ec'(d-1)^{i-1}}{\varepsilon_i}\right)^{\varepsilon_i d(d-1)^{i-1}u}\\
 \leq
 \sum_{u<cn/d}(\ln n)^2\max_{ud(d-1)^{r-1}<cn}\left(\frac{ed}{c'}\left(\frac{2ec'(d-1)^{r-1}}{\varepsilon_r}\right)^{\varepsilon_r d(d-1)^{r-1}}\right)^u.
\end{multline*}
If $\frac{1}{c'}\geq 25d(d-1)^{2r-2}$, then 
\begin{align}
 \ln&\left(\frac{ed}{c'}\right)-\varepsilon_r d(d-1)^{r-1}\ln\left(\frac{\varepsilon_r}{2ec'(d-1)^{r-1}}\right)\notag\\
 &\leq \ln\left(\frac{ed}{c'}\right)-2\ln d\left(\frac{d-1}{2}\right)^{r-1}\ln\left(\frac{\ln d}{de c'(d-1)^{r-1}2^{r-1}}\right)\notag\\
 &\leq \ln\left(25ed^2(d-1)^{2r-2}\right)-2\ln d\left(\frac{d-1}{2}\right)^{r-1}\ln\left(\frac{25\ln d(d-1)^{r-1}}{e 2^{r-1}}\right)<-\ln d\cdot\left(\frac{d-1}{2}\right)^{r-1},
 \label{eq:aux-bound-UB}
\end{align}
for $d$ large enough,
since $\ln(1/c)-2\ln d((d-1)/2)^{r-1}\ln(1/c)$ decreases in $1/c$.

If $\frac{1}{c'}< 25d(d-1)^{2r-2}$, then
\begin{align*}
 \ln\left(\frac{ed}{c'}\right)&-\varepsilon_r d(d-1)^{r-1}\ln\left(\frac{\varepsilon_r}{2ec'(d-1)^{r-1}}\right)\\ 
 &\leq \ln\left(\frac{ed}{c'}\right)-2\ln d\left(\frac{d-1}{2}\right)^{r-1} \ln\left(\frac{50}{2e}\right)\\
 &\leq \ln(25ed^2(d-1)^{2r-2})-2\ln d\left(\frac{d-1}{2}\right)^{r-1} \ln\left(\frac{50}{2e}\right)<-\ln d\cdot\left(\frac{d-1}{2}\right)^{r-1},
\end{align*}
as well.

We also observe that, if $\frac{1}{c'}\geq\frac{n}{d\ln n}\geq 25d(d-1)^{2r-2}$, then either $d=n^{\Theta(1)}$, or the bound~\eqref{eq:aux-bound-UB} can be improved to $-\Omega(\ln d\cdot \ln n)$.
Moreover, if $\frac{n}{d\ln n}< 25d(d-1)^{2r-2}$, then either $r=1$ and $d=n^{\Theta(1)}$, or $\ln d\cdot\left(\frac{d-1}{2}\right)=n^{\Theta(1)}$. We also note that, if $d=n^{\Theta(1)}$, then the maximum $r$ such that $ud(d-1)^{r-1}<cn/d$ is bounded.

We conclude that
\begin{align*} 
\mathbb{P}\biggl(\exists U\exists r \,\,  &|S_r(U)| <(1-50c-o_d(1))ud(d-1)^{r-1}\biggr)\\
&\leq\sum_{\ln n<u<cn/d}\ln^2 n\cdot  e^{-\ln d\cdot u}+\sum_{u\leq\ln n}\left(O(1)\cdot e^{-\Theta(\ln n\cdot u)}+e^{-(n^{-\Theta(1)}+\Omega(\ln d\ln n))\cdot u}\right)\\
&\leq \ln^2n\cdot e^{-10\ln n}+n^{-\Theta(1)}+e^{-10\ln n}=o(1),
\end{align*}
completing the proof.

\section{Acknowledgements.} The authors would like to thank the organisers of the MATRIX workshop ``Combinatorics of McKay and Wormald'', where this work originated. The last author is grateful to Oleg Verbitsky for introducing him to the topics of canonical labelling and colour refinement, and for bringing this question to his attention.

\end{document}